\crefname{notation}{Notation}{Notations} \crefname{restriction}{Restriction}{Restrictions} \crefname{constraint}{Constraint}{Constraints} \crefname{system}{System}{Systems} \crefname{distillate}{Distillate}{Distillates} 
\newcommand*{\FirstLine}[1]{\textbf{#1}}
\newcommand*{\op}[1]{\M{\mathop{\mathfrak{#1}}\nolimits}} 
\newcommand*{\opim}[4]{\M{\mathop{\ifthenelse{\equal{#4}{}}{\mathfrak{#1}_{#2}^{#3}}{\image{\mathfrak{#1}_{#2}^{#3}}{#4}}}}} 
\newcommand*{\MeanValue}[4][]{\M{\ifthenelse{\equal{#1}{}}{\mathfrak{\ol{I}}\mathord{\left\langle#2\right\rangle}\mathord{\left\lbrack#3,#4\right\rbrack_{\Lambda}}}{\mathfrak{M}\mathord{\left\langle\left. #2\right\rvert #1 \right\rangle}\mathord{\left\lbrack#3,#4\right\rbrack_{\Lambda}}}}} 
\newcommand*{\CasSeq}[2][]{\M{\set{\ldots\ifthenelse{\equal{#2}{}}{\CasBeg[#1]}{#2_{\CasBeg[#1]}}}_{\pre}}} 
\newcommand*{\CasBeg}[1][]{\M{\ifthenelse{\equal{#1}{}}{\mathord{\pm}}{\mathord{#1}}1}} 
\newcommand*{\CasLen}[1][]{\M{\lvert\ifthenelse{\equal{#1}{}}{\pm}{#1}\rvert}} 
\newcommand*{\ENorm}[1][]{\M{\lVert\ol{r}_{#1}\rVert_{D}}} 
\newcommand*{ 
\indexminus}{\M{\mathbf{\textbf{\textup{-\!-}}}}} 
\newcommand*{ 
\indexplus}{\M{\mathbf{\textbf{\textup{+}}}}} 
\newcommand*{ 
\indexz}{\M{\mathbf{\grave{z}}}} 
\newcommand*{\Upsilonsub}{\M{\Upsilon\!}} 
\newcommand*{\pre}{\M{\!\prec}} 
\newcommand*{\suc}{\M{\!\succ}} 
\newcommand*{\wosetintable}{\M{{\suc_{\phantom{|}}}\!}} 
\newcommand*{\eps}{\M{\epsilon\,}}
\begin{document} 
\title{Existence, uniqueness and classification of plane waves} 
\author[R A Milton]{Robert Milton} \address{} \email{robert.gomez.milton@gmail.com} \subjclass[2010]{} 
\begin{abstract}
	Existence and uniqueness of solutions is examined for the plane wave problem 
	\begin{align*}
		\pd{\xi}\pd{\xi} \breve{\mathsf{x}} + \pd{\xi} \breve{\mathsf{x}} + \mathop{V^{-2}}\breve{\mathsf{r}}(\breve{\mathsf{x}},\breve{\mathsf{z}}) &= 0 \\
		\mathop{D^{-1}} \pd{\xi}\pd{\xi} \breve{\mathsf{z}} + \pd{\xi} \breve{\mathsf{z}} - \mathop{V^{-2}}\breve{\mathsf{r}}(\breve{\mathsf{x}},\breve{\mathsf{z}}) &= 0 \\
		\pd{\xi}\breve{\mathsf{x}}\to 0 \T{ and } \breve{\mathsf{z}} \to 0 \QT{as} \xi\to &-\infty \\
		\breve{\mathsf{x}}\to 0 \T{ and } \breve{\mathsf{z}}\to 1 \QT{as} \xi\to &+\infty 
	\end{align*}
	where reaction function \M{\breve{\mathsf{r}}} is smooth and strictly positive (irreversible). Rudimentary analytic techniques are used to guarantee a unique plane wave at every wavespeed \M{V>V_*} above some threshold. The result readily extends to cutoff reactions \M{\breve{\mathsf{r}}(\breve{\mathsf{x}}<\breve{\mathsf{X}},\breve{\mathsf{z}})=0}. These results are not novel, but the method of proof is. 
\end{abstract}
\maketitle

\section{Introduction} \label{sec:Int} 
A common feature of natural processes is self-accelerating production, epitomized by population growth, thermal ignition and chemical autocatalysis. The rate \M{\breve{\mathsf{r}}} of irreversible reaction converting fuel (or prey) \M{\breve{\mathsf{z}}} into product (or predator, or heat) \M{\breve{\mathsf{x}}} is contingent upon and enhanced by the presence of \emph{both} species. This enables a local seeding \M{\breve{\mathsf{x}}} of product in a large, initially uniform \M{\breve{\mathsf{z}}=1} reservoir of fuel to form a wave of propagation (or flame), as Fickian diffusion balances locally concentrating reaction: 
\begin{align*}
	\pd{t} \breve{\mathsf{x}} &= \mathop{\breve{\nabla}^2}\breve{\mathsf{x}} \mathbin{+} \breve{\mathsf{r}}(\breve{\mathsf{x}},\breve{\mathsf{z}}) \\
	\pd{t} \breve{\mathsf{z}} &= D^{-1}\mathop{\breve{\nabla}^2}\breve{\mathsf{z}} \mathbin{-} \breve{\mathsf{r}}(\breve{\mathsf{x}},\breve{\mathsf{z}}) 
\end{align*}
Units of concentration and distance can always be chosen to render the reservoir concentration of \M{\breve{\mathsf{z}}} and diffusivity of \M{\breve{\mathsf{x}}} equal to unity. The dimensionless parameter \M{D>0} is the relative diffusivity (Lewis number) of fuel \M{\breve{\mathsf{z}}}.

The archetypal wavefront is one without curvature, uniform along two Euclidean directions. The remaining \M{\breve{\xi}}-direction is supposed to support a plane wave, invariant in the frame \M{(\breve{\xi},t)\cong(\breve{\xi}+V\Delta t,t+\Delta t)} moving at constant speed \M{V>0}. Capturing this symmetry in a travelling wave coordinate \M{\xi\deq V(\breve{\xi}-Vt)} yields 
\begin{align*}
	\pd{\xi}\pd{\xi} \breve{\mathsf{x}} + \pd{\xi} \breve{\mathsf{x}} + \mathop{V^{-2}}\breve{\mathsf{r}}(\breve{\mathsf{x}},\breve{\mathsf{z}}) &= 0 \\
	\mathop{D^{-1}} \pd{\xi}\pd{\xi} \breve{\mathsf{z}} + \pd{\xi} \breve{\mathsf{z}} - \mathop{V^{-2}}\breve{\mathsf{r}}(\breve{\mathsf{x}},\breve{\mathsf{z}}) &= 0 
\end{align*}
subject to 
\begin{align*}
	\pd{\xi}\breve{\mathsf{x}}\to 0 \T{ and } \breve{\mathsf{z}} \to 0 &\QT{as} \xi\to -\infty \QT{(exhausted fuel)} \\
	\breve{\mathsf{x}}\to 0 \T{ and } \breve{\mathsf{z}}\to 1 &\QT{as} \xi\to +\infty \QT{(fresh fuel)} 
\end{align*}

\begin{SCfigure}
	[2.0][tb] 
	\includegraphics[scale=1]{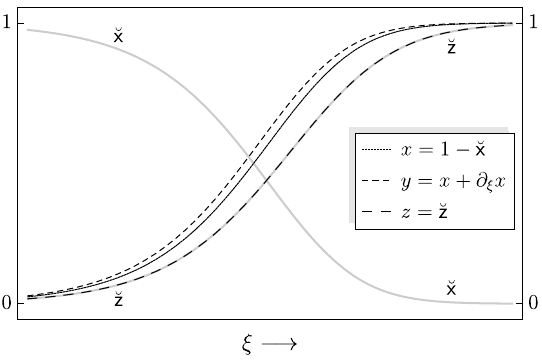} \captionsetup{margin={24pt,8pt}}\caption{Concentration profiles in grey of a plane wave moving in the positive \M{\xi}-direction. Black curves depict the dynamical system formulation \M{\vr{u}\deq(x,y,z)^{\intercal}}, the third component lying precisely atop the grey profile of \M{\breve{\mathsf{z}}}.} 
\label{fig:intro} \end{SCfigure}

\Cref{fig:intro} exemplifies solutions of this equation. The foremost concern is the set of plane wavespeeds \M{\set{V}} supporting such solutions, given reaction \M{\breve{\mathsf{r}}} and diffusivity \M{D}. Which \M{V} is actually observed in practice may depend on the evolution from intial seeding \M{\breve{\mathsf{x}}(\breve{\xi},t=0)}, but this lies beyond (or before) the scope of this study.

Equidiffusive (\M{D=1}) systems have received wide attention since the advent of the Fisher-KPP equations \M{\breve{\mathsf{r}}=\breve{\mathsf{z}}(1-\breve{\mathsf{x}})^K} \citep{Fisher37,Kolmogorov37}, and high activation-energy approximations in flame dynamics \citep{Zeldovich38}. It was demonstrated in the 1970s that any wavespeed \M{V\geq V_*} is possible, provided reaction rate is Lipschitz continuous in both species \citep{Aronson78,Fife77}. Any such \M{\breve{\mathsf{r}}} is endowed with its own minimum wavespeed \M{V_*}, ultimately realized from any strictly local seeding \M{\breve{\mathsf{x}}(\breve{\xi},t=0)}.

Plane wave existence is an essential feature of phase \emph{plane} topology. However, \M{D\ne1} entails a three dimensional phase space eluding elementary topological traps. In the 1980s the general problem was fully solved \citep{Marion85}, by subtle topological deformation of the problem itself instead of its trajectories. This work established plane wavespeeds \M{V\geq V_*} for Lipschitz continuous \M{\breve{\mathsf{r}}} and arbitrary \M{D\in\st{R^+}}, replicating and extending the equiffusive result. It has since been extended to an ever widening problem class \citep{Volpert94}.

This paper presents a basic phase space analysis of Autonomous Dynamical \cref{def:PW:ADS} representing the plane wave problem via 
\begin{gather*}
	\vr{u} \deq (x,y,z)^{\intercal} \deq \left((1 - \breve{\mathsf{x}}), (1 - \pd{\xi}\breve{\mathsf{x}} - \breve{\mathsf{x}}),\breve{\mathsf{z}} \right)^{\intercal} \\
	\Lambda r(\vr{u}) \deq \mathop{V^{-2}}\breve{\mathsf{r}}(\breve{\mathsf{x}},\breve{\mathsf{z}}) 
\end{gather*}
The plane wavespeed is expressed through the lentor (literally, slowness) \M{\Lambda\propto V^{-2}}. It is shown that a unique plane wave exists for any \M{\Lambda\leq\Lambda_*} provided \M{r} is smooth (infinitely differentiable) in both species. This encompasses most physically realistic scenarios, but is weaker than the general result and intends to complement it. Generality extends Brouwer's fixed point theorem by advanced machinery of Leray-Schauder topological degree. In contrast, this paper relies only on the analysts' flint tools of continuity, limit and the mean value theorem (Banach's fixed point also playing its customary Picard-Lindel\"of cameo). Notation supporting these is presented during problem specification in the next Section. Phase space is described in \cref{sec:O}, followed by its entry and exit points in \cref{sec:KP}. The material in these opening Sections is largely standard, primarily providing glossary and database for the Sections that follow. The core argument resides in \crefrange{sec:IP}{sec:UD}, constructing a uniformly continuous, monotonic contrast between trajectories of \M{\Lambda+\eps}. The contrast provides sufficient (though not entire) derivative in \cref{sec:CP}, as the finishing touches are applied to the demonstration. Results and commentary are collected in \cref{sec:Con}.

This primitive approach hopes to provide a secondary, alternative perspective to the work of Marion \emph{et al.}. It is unclear how the two approaches relate to each other, or indeed to the phase plane arguments traditionally applied to equidiffusive systems.

\section{Plane Waves} \label{sec:PW} 
\begin{definition}\label[notation]{def:PW:Not} 
	\FirstLine{Notational conventions} apply as follows. Except miniscule \M{\eps\in\st{R^+}}, constants are written uppercase, variables lowercase. Vectors appear bold, often in terms of the constants 
	\begin{equation*}
		\QM[;]{\vr{0} \deq (0,0,0)^\intercal} \QM[;]{\vr{1} \deq (1,1,1)^\intercal} \QM[;]{\vr{1}_\mathbf{x} \deq (1,0,0)^\intercal} \QM[;]{\vr{1}_\mathbf{y} \deq (0,1,0)^\intercal} \QM{\vr{1}_\mathbf{z} \deq(0,0,1)^\intercal} 
	\end{equation*}
	Compact spaces are overlined, their open interiors not. This endows any continuous function \mapdef{\ol{f}}{\olst{U}}{\st{R}} with maximal open interior \mapdef{f}{\st{U}}{\st{R}} such that \M{\image{\ol{f}}{\olst{U}}=\ol{\image{f}{\st{U}}}}. Here and subsequently, function acts on set to denote the image set. Function as operator dons fraktur typeface, for example projection \op{X} into the \M{yz}-plane 
	\begin{equation*}
		\op{X} \vr{u} \deq (y,z)^\intercal \QT{for all} \vr{u} \deq (x,y,z)^\intercal \in\st{R}^3\deq\open{-\infty}{+\infty}^3 
	\end{equation*}
	However, standard differential operators retain normal typeface and hopelessly redundant overlines may be omitted. When a particular well-order is intended, it subscripts the woset as in \M{\set{\vr{u}_{m}}_{\succ}}.
	
	The class of \M{k\in\st{N}} times continuously differentiable functions on \st{U} is denoted \cont{k}{\st{U}}. The zeroth derivative is \M{f^{(0)}\deq f}, and \cont{k}{\olst{U}} stipulates one-sided continuity of one-sided derivatives on the boundary \M{\olst{U} \setminus \st{U}}. A finite, \st{R}-valued function concept prevails, admitting \M{\ol{\exp} \in \cont{\infty}{\olst{R^-}}} for example while excluding the identity function \M{\ol{1} \notin \cont{0}{\olst{R^-}}}. Continuous \M{f^{(k)}} may thereby lack compactification \M{\ol{f}^{(k)}}, though the latter is surely unique and continuous when existent. This contingency amounts to existence of a uniform norm \M{\norm{f^{(k)}}{\st{U}}} on \cont{0}{\st{U}}, hence deprecated in favour of 
	\begin{equation*}
		\bnorm{\ol{f}^{(k)}}{\olst{U}} \deq \sup \bmodulus{\image{\ol{f}^{(k)}}{\olst{U}}} = \sup \bmodulus{\image{f^{(k)}}{\st{U}}}\in\st{R} \QT{for all} \ol{f}\in\cont{k}{\st{\ol{U}}} \subset \cont{k}{\st{U}} 
	\end{equation*}
	The definitions extend naturally to Fr\'{e}chet derivatives \M{\vr{f}^{(k)}} of vector-valued \mapdef{\vr{f}}{\st{U}}{\st{R}^n} on the understanding that modulus \M{\left|\cdot\right|} signifies euclidean norm on \M{\st{R}^n}. 
	
	Discontinuous derivatives find employment as Lipschitz constants 
	\begin{equation*}
		\bnorm{\olvr{f}_{}^{(k+1)}}{\olst{U}} \deq \sup \bmodulus{\image{\olvr{f}_{}^{(k+1)}}{\olst{U}}} \in\st{R} \QT{for all} \olvr{f}_{}^{}\in\cont[1]{k}{\olst{U}} \subset\cont{k}{\olst{U}} 
	\end{equation*}
	These exist whenever any Lipschitz constant does \citep[pp.\,681]{Schechter97}, as signalled by H\"{o}lder exponent 1 on the function space \cont[1]{k}{\olst{U}}. Again by equivalence of suprema, \cont[1]{k}{\st{U}} is naturally deprecated in favour of the compactified domain. No continuity class \st{F} is itself compact, but then our concern is sequences in \st{U} rather than \M{\st{F}^{(k)}} or \M{\st{F}^{(k,1)}}. 
\end{definition}
\begin{definition}\label{def:PW:Env} 
	\FirstLine{The reaction diffusion environment \M{(\ol{r};D)}} fixes diffusivity \M{D \in \st{R^+}}, and reaction function \ol{r} abiding \crefrange{eq:PW:U def}{eq:PW:r inc z} below. The environment is predetermined once and for all, albeit arbitrarily. This specifies the physical system being studied, habitat for a whole family of plane wave problems.
	
	The reaction function \ol{r} must be non-negative, depending only on the non-negative quantities \M{(1-\ol{x})} and \M{\ol{z}} 
	\begin{equation}\label{eq:PW:U def} 
		\begin{aligned}
			\closed{0}{1}^3 \deqr \olst{U}_{\min} \subseteq \:\:\: &\olst{U} \deq \dom(\ol{r}) \: \subseteq \olst{U}_{\max} \deq \closed{-\infty}{1} \times \olst{R} \times \closed{0}{+\infty} \\
			&\image{\ol{r}}{\olst{U}} \subset \olst{R^+} \\
			&\image{\pd{y}\ol{r}}{\olst{U}} = \set{0} 
		\end{aligned}
	\end{equation}
	in a vital way 
	\begin{equation}\label{eq:PW:ker r} 
		\ker(\ol{r}) \deq \image{\ol{r}_{}^{-1}}{\set{0}} = \setbuilder{\olvr{u}\deq(\ol{x},\ol{y},\ol{z})^\intercal\in \olst{U}}{\ol{z}(1-\ol{x})=0} 
	\end{equation}
	It must be smooth 
	\begin{equation}\label{eq:PW:r smooth} 
		\ol{r} \in \cont{\infty}{\olst{U}} 
	\end{equation}
	and strictly increasing in \M{\ol{z}} 
	\begin{equation}\label{eq:PW:r inc z} 
		\frac{\ol{r}}{\pd{z}\ol{r}} \in \cont{0}{\olst{U}} 
	\end{equation}
	The environment and all it supports is coercively undefined outside \olst{U}. 
\end{definition}
\begin{definition}\label{def:PW:Problem} 
	\FirstLine{A plane wave problem} is specified by Reaction Diffusion Environment~\labelcref{def:PW:Env} and lentor \M{\Lambda \in \st{R^+}}. It consists of Autonomous \cref{def:PW:ADS} subject to Boundary Constraints~\labelcref{def:PW:BC0,def:PW:BC1}. This paper confronts each problem as \M{\Lambda} varies in an unchanging environment. The fixed parameters stay implicit, indexing the problem family by lentor alone. 
\end{definition}
\begin{definition}\label[system]{def:PW:ADS} 
	\FirstLine{The autonomous system} governing plane wave problem \M{\Lambda} is 
	\begin{equation*}
		\pd{\xi} 
		\begin{pmatrix}
			x(\Lambda;\xi)\\y(\Lambda;\xi)\\z(\Lambda;\xi) 
		\end{pmatrix}
		\deqr \pd{\xi}\vr{u}(\Lambda;\xi) = \vr{a}_{\Lambda}(\vr{u}) \deq 
		\begin{pmatrix}
			y(\xi)-x(\xi)\\
			\Lambda r(\vr{u}) \\Dy(\xi)-Dz(\xi) 
		\end{pmatrix}
	\end{equation*}
	where \M{\xi \in \st{R}} is the travelling wave coordinate. The missing arguments on the extreme right should be inferred from the left, as in \M{\vr{u}=\vr{u}(\Lambda;\xi)}. Arguments are dropped in strict order, as in \M{y(\Lambda;\xi)=y(\xi)} never \M{y(\Lambda)}. Arguments will be omitted in this manner when the intended inference is unambiguous, hopefully offering some compromise between fidelity and fluency. 
\end{definition}
\begin{definition}\label{def:PW:BC0} 
	\FirstLine{The original boundary constraint} \M{\olvr{u}(\Lambda;-\infty)=\vr{0}} specifies the exhausted state lingering behind the wavefront. 
\end{definition}
\begin{definition}\label{def:PW:BC1} 
	\FirstLine{The terminal boundary constraint} \M{\olvr{u}(\Lambda;+\infty)=\vr{1}} specifies the fresh state waiting before the wavefront. 
\end{definition}
\begin{proposition}\label{pro:PW:symmetry} 
	\FirstLine{Plane wave symmetry} precludes unique solution: \M{\vr{u}(\Lambda;\xi)} satisfies \crefrange{def:PW:Env}{def:PW:BC1} iff \M{\vr{u}(\Lambda;\xi+\Xi)} does for all \M{\Xi \in \st{R}}. 
\end{proposition}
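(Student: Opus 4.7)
The plan is to exploit the autonomous character of \cref{def:PW:ADS}: since the field \M{\vr{a}_{\Lambda}} depends only on \M{\vr{u}}, never on \M{\xi} directly, any rigid translation in \M{\xi} carries integral curves to integral curves. The boundary constraints \cref{def:PW:BC0,def:PW:BC1}, imposed at the improper points \M{\xi=\pm\infty}, are manifestly insensitive to any finite shift. Consequently both halves of the \emph{iff} collapse to chain-rule bookkeeping.

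First I would fix \M{\Xi\in\st{R}}, assume \M{\vr{u}(\Lambda;\xi)} satisfies \crefrange{def:PW:Env}{def:PW:BC1}, and introduce the shifted trajectory \M{\vr{v}(\Lambda;\xi)\deq \vr{u}(\Lambda;\xi+\Xi)}. A single application of the chain rule yields \M{\pd{\xi}\vr{v}(\Lambda;\xi) = (\pd{\xi}\vr{u})(\Lambda;\xi+\Xi) = \vr{a}_{\Lambda}(\vr{u}(\Lambda;\xi+\Xi)) = \vr{a}_{\Lambda}(\vr{v}(\Lambda;\xi))}, so \M{\vr{v}} obeys precisely the same autonomous system. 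The boundary data transfer with equal ease: \M{\xi\to\pm\infty} iff \M{\xi+\Xi\to\pm\infty}, whence \M{\olvr{v}(\Lambda;-\infty)=\olvr{u}(\Lambda;-\infty)=\vr{0}} and \M{\olvr{v}(\Lambda;+\infty)=\vr{1}}, discharging \cref{def:PW:BC0,def:PW:BC1}.

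For the converse implication I would simply rerun the argument with \M{-\Xi} in place of \M{\Xi}, exploiting the fact that the translation \M{\xi\mapsto\xi+\Xi} admits the inverse \M{\xi\mapsto\xi-\Xi} of identical form, so that the roles of \M{\vr{u}} and \M{\vr{v}} may be swapped at will. There is no genuine obstacle here; the statement is essentially a restatement of translation invariance in the theory of autonomous ODEs. The only subtle point worth flagging is consistency with \cref{def:PW:Not}: the compactification \M{\olvr{v}} inherits the one-sided continuity of \M{\olvr{u}} at \M{\pm\infty} because translation is a homeomorphism of \M{\closed{-\infty}{+\infty}} fixing the two improper endpoints, so no additional smoothness verification is required.
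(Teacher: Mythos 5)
Your proof is correct and is exactly the standard translation-invariance argument for autonomous systems that the paper leaves implicit (the Proposition is stated without proof there). Nothing further is needed.
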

\begin{definition}\label{def:PW:PW} 
	\FirstLine{A plane wave} is the image \M{\image{\vr{u}}{\st{R}_{\Lambda}}} of any solution to problem \M{\Lambda}, where 
	\begin{equation*}
		\st{R}_{\Lambda} \deq \set{\Lambda} \times \st{R} 
	\end{equation*}
	Subscript lentor will escort any interval in this manner. Assertions like 
	\begin{equation*}
		\ol{f} \in \cont{\infty}{\closed{0}{1}_{\Lambda}} \deq \cont{\infty}{\set{\Lambda}\times\closed{0}{1}} 
	\end{equation*}
	say nothing about \M{\Lambda}-dependence: pedantically following \cref{def:PW:Not}, this stipulates neither-sided continuity of neither-sided derivatives with respect to \M{\Lambda} on \M{\set{\Lambda}}. 
\end{definition}

\section{Orbitals} \label{sec:O}

\begin{SCfigure}
	[2.0][tb] 
	\includegraphics[scale=1]{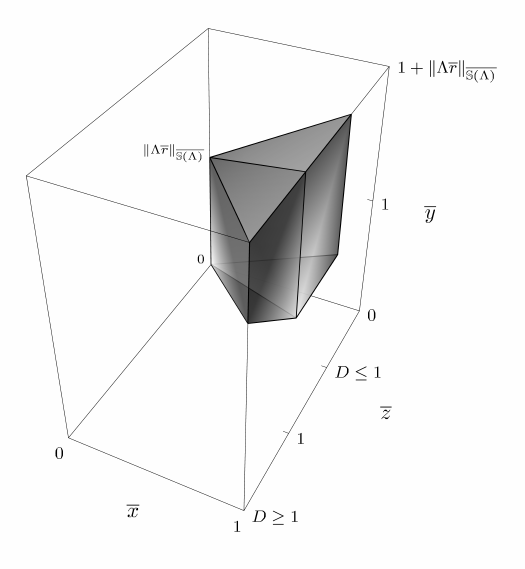} \rule[0pt]{-20pt}{0pt} \captionsetup{margin={0pt,8pt}} \caption{Two illustrative phase spaces for problem \M{\Lambda} in alternative environments. The far triangular prism is germane to environments \M{(\ol{r};D\leq1)}. The near prism is truncated -- the top and bottom triangular faces are not parallel -- by its environment \M{(\ol{r};D\geq1)}. In an equidiffusive environment \M{(\ol{r};D=1)}, the phase space for problem \M{\Lambda} is the plane intersection of the two prisms, on which \M{\ol{z}=\ol{x}}. The equilibrium points \vr{0} and \vr{1}, residing in the base of the phase spaces, have been spotlit.} 
\label{fig:O:PS} \end{SCfigure}

\begin{lemma}\label{lem:O:PS} 
	\FirstLine{The phase space for problem \M{\Lambda}} 
	\begin{equation*}
		\olst[\Lambda]{S} \deq \closed{0}{1} \times \closed{\max(\ol{x},\ol{z})}{\:\ol{x}+\norm{\Lambda\ol{r}}{\olst[\Lambda]{S}}} \times \closed{\ol{x}\min(D,1)}{\:\ol{x}\max(D,1)} 
	\end{equation*}
	depicted in \cref{fig:O:PS} is inescapable when harmlessly subsumed in \olst{U}. It is two-dimensional \M{\ol{z}\in\closed{\ol{x}}{\ol{x}}} in any equidiffusive environment \M{(\ol{r};D=1)}. In which case \textbf{\textup{z}}-components simply duplicate \textbf{\textup{x}}-components, and must be ignored throughout this paper. 
	
	An interior orbital \M{\image{\vr{u}}{\st{R}_{\Lambda}}\subseteq\st{S}(\Lambda)} satisfying Autonomous \cref{def:PW:ADS} is smooth \M{\vr{u}\in\cont{\infty}{\st{R}_{\Lambda}}} and strictly increasing 
	\begin{equation*}
		\image{\pd{\xi}\vr{u}}{\st{R}_{\Lambda}}\subseteq\image{\vr{a}_{\Lambda}}{\st{S}(\Lambda)}\subseteq\st{R^+}\times\st{R^+}\times\st{R^+} 
	\end{equation*}
	Monotonicity remains modulus strict on \olst[\Lambda]{S} outside boundary constraints, so 
	\begin{equation*}
		\set{\T{singular points}} \deq \image{\olvr{a}_{\Lambda}^{-1}}{\set{\vr{0}}} = \set{\vr{0},\vr{1}} 
	\end{equation*}
\end{lemma}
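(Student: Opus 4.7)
I would decompose the statement into four pieces treated in order: smoothness, the singular set, inescapability of the box $\olst[\Lambda]{S}$, and strict monotonicity on its interior (with the equidiffusive collapse tacked on at the end).

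\emph{Smoothness and singular points.} Since $\ol{r}\in\cont{\infty}{\olst{U}}$, the field $\vr{a}_{\Lambda}$ is smooth and a routine bootstrap along the ODE promotes any continuous orbital to $\cont{\infty}{\st{R}_{\Lambda}}$. Solving $\vr{a}_{\Lambda}(\vr{u})=\vr{0}$ forces $y=x$, $y=z$ (using $D>0$) and $r(\vr{u})=0$; combined with the kernel characterisation $z(1-x)=0$ from \cref{def:PW:Env}, this singles out the two points $\vr{0}$ and $\vr{1}$.

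\emph{Inescapability via integrating factors.} The core argument is a family of five linear scalar ODEs in $\xi$, each with a signed right-hand side and vanishing limit at $-\infty$:
\begin{align*}
\pd{\xi} y &= \Lambda r,\\
\pd{\xi}(y-x) + (y-x) &= \Lambda r,\\
\pd{\xi}(y-z) + D(y-z) &= \Lambda r,\\
\pd{\xi}(x-z) + (x-z) &= (1-D)(y-z),\\
\pd{\xi}(z-Dx) + D(z-Dx) &= D(1-D)x.
\end{align*}
Each is solved explicitly with an integrating factor. From the first three I read off monotonicity of $y$ (hence $y\leq 1$ via \cref{def:PW:BC1}), $y\geq x$, and $y\geq z$; the elementary estimate on the second equation upgrades $y-x\leq\norm{\Lambda r}{\olst[\Lambda]{S}}$. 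Having secured $y\geq z$, the fourth and fifth equations carry right-hand sides whose sign is that of $1-D$, so $x-z$ and $z-Dx$ both inherit that sign and pin $z\in\closed{x\min(D,1)}{x\max(D,1)}$. These inequalities are precisely the defining constraints of $\olst[\Lambda]{S}$, so the orbital is trapped inside.

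\emph{Strict monotonicity and the equidiffusive collapse.} On the open interior $\st{S}(\Lambda)$ one has $x<1$ and $z\geq x\min(D,1)>0$, placing $\vr{u}$ off $\ker(\ol r)$ and delivering $\pd{\xi} y = \Lambda r > 0$; the strict inequalities $y>x$ and $y>z$ then give $\pd{\xi} x, \pd{\xi} z > 0$. For the closed statement on $\olst[\Lambda]{S}\setminus\set{\vr{0},\vr{1}}$, a short face-by-face inspection shows that away from the two equilibria at least one of $y-x$, $\Lambda r$, $y-z$ is strictly positive, hence $\vr{a}_{\Lambda}$ is non-vanishing. When $D=1$ both the fourth and fifth ODEs degenerate to $\pd{\xi}(x-z) + (x-z) = 0$ with zero boundary value, forcing $x\equiv z$ and collapsing the box onto the plane $z=x$.

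\emph{Anticipated obstacle.} The one subtle point is the self-referential upper bound $y\leq x + \norm{\Lambda r}{\olst[\Lambda]{S}}$, in which the norm is taken over the very box it helps to define. I would resolve this by first establishing the cruder envelope $\image{\vr{u}}{\st{R}_{\Lambda}}\subseteq\closed{0}{1}^{3}\subseteq\olst{U}$ from the integrating-factor estimates alone (which require only $\Lambda r\geq 0$ and $y(+\infty)=1$, not any sup-norm); on this compact subset $\Lambda r$ is bounded and $\norm{\Lambda r}{\olst[\Lambda]{S}}$ emerges as a well-defined finite constant, after which the second integrating factor yields $y-x\leq\norm{\Lambda r}{\olst[\Lambda]{S}}$ in a single pass.
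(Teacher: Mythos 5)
Your proposal is correct and follows essentially the same route as the paper: the paper's entire proof consists of rearranging the autonomous system into linear scalar ODEs with integrating factors and sign-definite right-hand sides (its combinations \M{\pd{\xi}(\ol{z}-\ol{x})+D(\ol{z}-\ol{x})=(D-1)(\ol{y}-\ol{x})} and \M{\pd{\xi}(\ol{z}-D\ol{x})+(\ol{z}-D\ol{x})=(1-D)\ol{z}} differ only trivially from your fourth and fifth equations), and you supply the routine smoothness, singular-set and strict-monotonicity details that the paper leaves implicit. One small correction to your anticipated obstacle: the crude envelope \M{\closed{0}{1}^3} via \M{\ol{y}(+\infty)=1} is available only under the terminal boundary constraint, not for general orbitals; the self-reference actually resolves because \M{\image{\pd{y}\ol{r}}{\olst{U}}=\set{0}} by \cref{eq:PW:U def}, so \M{\norm{\Lambda\ol{r}}{\olst[\Lambda]{S}}} depends only on the \M{(\ol{x},\ol{z})}-ranges of the box, which are specified without reference to the norm.
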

\begin{proof}
	Autonomous \cref{def:PW:ADS} exhibits monotonicity, and rearranges to 
	\begin{align*}
		\pd{\xi}(\ol{y}-\ol{x}) + (\ol{y}-\ol{x}) =\pd{\xi}(\ol{y}-\ol{z}) + D(\ol{y}-\ol{z}) &= \Lambda \ol{r}(\olvr{u}) \\
		\pd{\xi}(\ol{z}-\ol{x}) + D(\ol{z}-\ol{x}) &= (D-1)(\ol{y}-\ol{x}) \\
		\pd{\xi}(\ol{z}-D\ol{x}) + (\ol{z}-D\ol{x}) &= (1-D)\ol{z} 
	\end{align*}
	confirming \M{\olst[\Lambda]{S}\subseteq\olst{U}} a legitimate (inescapable) phase space. Strict monotonicity on \M{\st{S}(\Lambda)} prevents \M{\olst[\Lambda]{S} \setminus \olst{U}_{\min}} from harmfully participating in spurious plane waves, permitting the (perhaps false) subsumption \M{\olst[\Lambda]{S}\subseteq\olst{U}}. 
\end{proof}
\begin{definition}\label{def:O:orbital} 
	\FirstLine{A compact orbital \M{\image{\olvr{v}}{\closed{X_o}{X}_{\Lambda}} \deq \image{\olvr{u}}{\closed{\Xi_o}{\Xi}_{\Lambda}}\subseteq\olst[\Lambda]{S}}} satisfies Autonomous \cref{def:PW:ADS} irrespective of boundary constraints. This curve in three-dimensional space naturally decomposes into its orbital domain 
	\begin{equation*}
		\ol{v}_\mathbf{x}(\Lambda;\ol{x})\deq \ol{x}(\Lambda;\xi) \: \in \: \closed{X_o}{X}_{\Lambda} \deq \set{\Lambda} \times \image{\ol{x}}{\closed{\Xi_o}{\Xi}_{\Lambda}} 
	\end{equation*}
	and orbital projection (discarding \M{\ol{v}_{\mathbf{z}}^{}} in equidiffusive environments \M{(\ol{r};D=1)}) 
	\begin{equation*}
		\begin{pmatrix}
			\ol{v}_\mathbf{y}(\Lambda;\ol{x}) \\
			\ol{v}_\mathbf{z}(\Lambda;\ol{x}) 
		\end{pmatrix}
		\deq \op{\ol{X}}\olvr{v}(\Lambda;\ol{x}) \deq \op{\ol{X}}\olvr{u}(\Lambda;\xi) \: \in \: \op{\ol{X}}\image{\olvr{v}}{\closed{X_o}{X}_{\Lambda}} \deq \op{\ol{X}}\image{\olvr{u}}{\closed{\Xi_o}{\Xi}_{\Lambda}} 
	\end{equation*}
	The corresponding interior orbital is \M{\image{\vr{v}}{\open{X_o}{X}_{\Lambda}} \deq \image{\vr{u}}{\open{\Xi_o}{\Xi}_{\Lambda}}\subseteq\st{S}(\Lambda)}. Incarcerated in phase space, any orbital is legislated by \cref{lem:O:PS}, and is maximal unless proper subset of another. 
\end{definition}
\begin{lemma}\label[distillate]{lem:O:NADS} 
	\FirstLine{The non-autonomous distillate} governing an orbital is 
	\begin{equation*}
		\pd{x} 
		\begin{pmatrix}
			v_\mathbf{y}(\Lambda;x) \\
			v_\mathbf{z}(\Lambda;x) 
		\end{pmatrix}
		\deq \pd{x} \op{X} \vr{v}(\Lambda;x) = \vr{b}_{\Lambda}(\vr{v}) \deq \frac{1}{(v_\mathbf{y}-x)} 
		\begin{pmatrix}
			\Lambda r(\vr{v}) \\
			Dv_\mathbf{y}-Dv_\mathbf{z} 
		\end{pmatrix}
	\end{equation*}
	subject to 
	\begin{equation*}
		\olvr{v}(\Lambda;X_o)=\olvr{u}(\Lambda;\Xi_o) 
	\end{equation*}
	which ensures \M{\olvr{v}(\Lambda;X)=\olvr{u}(\Lambda;\Xi)}. Orbitals are continuous with smooth interior 
	\begin{align*}
		\olvr{v} &\in \cont{0}{\closed{X_o}{X}_{\Lambda}} \\
		\vr{v} &\in \cont{\infty}{\open{X_o}{X}_{\Lambda}} 
	\end{align*}
	Derivatives of projection component \M{\mathbf{n}\in\set{\mathbf{y},\mathbf{z}}_{\succ}} are uniformly continuous provided they exist at both endpoints 
	\begin{equation*}
		\ol{v}_{\mathbf{n}}^{} \in \cont{k}{\closed{X_o}{X}_{\Lambda}} \QIFF \image{\ol{v}_{\mathbf{n}}^{(k)}}{\set{\Lambda}\times\set{X_o,X}}\subset\st{R}. 
	\end{equation*}
\end{lemma}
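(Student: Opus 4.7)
The plan is to realize the distillate as a reparametrization of Autonomous \cref{def:PW:ADS} by $x$ rather than $\xi$. \cref{lem:O:PS} gives $\pd{\xi}x = y - x > 0$ strictly on the open interior $\open{\Xi_o}{\Xi}_\Lambda$, so $\xi \mapsto x$ is a smooth strictly increasing bijection onto $\open{X_o}{X}_\Lambda$ with smooth inverse by the inverse function theorem. Setting $\vr{v}(\Lambda;x) \deq \vr{u}(\Lambda;\xi(x))$ and applying the chain rule $\pd{x} = (y-x)^{-1}\pd{\xi}$ to the second and third components of Autonomous \cref{def:PW:ADS} recovers $\vr{b}_\Lambda$. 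The matching condition $\olvr{v}(\Lambda;X_o) = \olvr{u}(\Lambda;\Xi_o)$ holds by construction, and $\olvr{v}(\Lambda;X) = \olvr{u}(\Lambda;\Xi)$ follows because the monotonic reparametrization extends continuously to the compact endpoints.

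Continuity $\olvr{v} \in \cont{0}{\closed{X_o}{X}_\Lambda}$ then follows by composing continuity of $\olvr{u}$ with the continuous inverse reparametrization on the compact interval. Interior smoothness $\vr{v} \in \cont{\infty}{\open{X_o}{X}_\Lambda}$ follows from standard ODE regularity, since the vector field $\vr{b}_\Lambda$ is smooth wherever $v_\mathbf{y} > x$ (which holds throughout the interior by \cref{lem:O:PS}) and $r \in \cont{\infty}{\olst{U}}$ by \cref{def:PW:Env}.

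The equivalence characterizing $\cont{k}{\closed{X_o}{X}_\Lambda}$ regularity at endpoints is the substantive claim. The forward direction is immediate from the definitions in \cref{def:PW:Not}. For the backward direction, assume $\ol{v}_\mathbf{n}^{(k)}(X_o)$ and $\ol{v}_\mathbf{n}^{(k)}(X)$ are finite one-sided boundary derivatives, and descend inductively in $j = k, k-1, \ldots, 0$ to prove $\ol{v}_\mathbf{n}^{(j)} \in \cont{0}{\closed{X_o}{X}_\Lambda}$. The inductive step $j+1 \Rightarrow j$ uses the bound $\bnorm{\ol{v}_\mathbf{n}^{(j+1)}}{\closed{X_o}{X}_\Lambda} < \infty$ together with the mean value theorem to establish uniform Lipschitz control of $v_\mathbf{n}^{(j)}$ on the interior, which then extends continuously to the compact closure. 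The base case $j = k$ requires identifying the hypothesised one-sided boundary derivative $\ol{v}_\mathbf{n}^{(k)}(X_o)$ with the interior limit $\lim_{x \to X_o^+} v_\mathbf{n}^{(k)}(x)$.

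I expect this base-case synchronization to be the main obstacle, since in general a function with finite one-sided boundary derivative need not have an interior derivative that extends continuously (as the textbook $x^2\sin(1/x)$ example at the origin illustrates). The argument must exploit the ODE structure: the interior $v_\mathbf{n}^{(k)}$ is algebraically determined by $\vr{v}$ via repeated differentiation of $\vr{b}_\Lambda$, and continuity of $\olvr{v}$ on the closure combined with a mean value argument on $\ol{v}_\mathbf{n}^{(k-1)}$ forces the interior limit to coincide with the hypothesised boundary derivative, closing the base case and launching the induction cleanly.
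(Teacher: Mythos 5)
Your proposal follows the paper's route exactly: reparametrize by $x$ using $\pd{\xi}x = y-x>0$ from \cref{lem:O:PS}, obtain interior smoothness by repeated differentiation of the distillate, and compactify $v_{\mathbf{n}}^{(k)}$ to get uniform continuity on the closed interval. The base-case obstacle you anticipate is a non-issue under the conventions of \cref{def:PW:Not}, where $\ol{v}_{\mathbf{n}}^{(k)}$ at an endpoint denotes the compactification of the interior derivative (i.e.\ its limit as $x\to X_o^+$ or $x\to X^-$), not a boundary difference quotient; the hypothesis therefore already supplies exactly the endpoint limits needed, the $x^2\sin(1/x)$ pathology cannot arise, and the mean value theorem then identifies the one-sided boundary derivatives with those limits just as in your inductive step.
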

\begin{proof}
	Projecting Autonomous \cref{def:PW:ADS} using \M{\pd{\xi}x\pd{x}\vr{v}=\pd{\xi}\vr{u}} yields the non-autonomous distillate. Repeated differentiation secures interior smoothness as \M{\pd{\xi}x>0} throughout \M{\st{S}(\Lambda)}. Uniform continuity is trivial upon compactifying \M{v_\mathbf{n}^{(k)}}, the required limits secure by boundary constraints on \M{\olvr{v}^{(0)}} alone. 
\end{proof}
\begin{definition}\label{def:O:KP} 
	\FirstLine{A kernel point \M{\olvr{U}\deq(\ol{X},\ol{Y},\ol{Z})^\intercal\in\olst[\Lambda]{S}\cap\ker(\ol{r})}} is either original (\M{\ol{Z}=0}, interpreted as \M{\ol{X}=0} iff \M{D=1}), or terminal \M{(\ol{X}=1)}. It is singular iff \M{\olvr{U} = \vr{1}\ol{X}}, otherwise regular. Orbitals are described according to the kernel points they encompass. 
\end{definition}
\begin{lemma}\label{lem:O:CO} 
	\FirstLine{The compact orbit \M{\olst[\Lambda]{V}}} is defined as a maximal, singular original orbital. Each problem \M{\Lambda\in\st{R^+}} engenders a unique compact orbit, ever (singular or regular) terminal \M{\olst[\Lambda]{V}=\image{\olvr{v}}{\closed{0}{1}_{\Lambda}}}. 
\end{lemma}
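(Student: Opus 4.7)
The plan is to combine an unstable-manifold analysis at the singular original point \M{\vr{0}} with the strict monotonicity granted by \cref{lem:O:PS}, so that existence, uniqueness and terminality all fall out together.

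First, I would linearise \M{\vr{a}_{\Lambda}} at \M{\vr{0}}. Because \M{\image{\pd{y}\ol{r}}{\olst{U}}=\set{0}} and because \M{r} vanishes identically on the plane \M{\ol{z}=0} (by \cref{eq:PW:ker r}), the partials \M{\pd{x}r(\vr{0})} and \M{\pd{y}r(\vr{0})} are both zero, leaving \M{\mu\deq\pd{z}r(\vr{0})\geq 0} as the sole non-trivial Jacobian entry in the second row. The characteristic polynomial therefore factors as \M{(\lambda+1)\bigl[\lambda^{2}+D\lambda-D\Lambda\mu\bigr]}, yielding the spectrum \M{\set{-1,\:(-D\pm\sqrt{D^{2}+4D\Lambda\mu})/2}}. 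In the generic case \M{\mu>0} exactly one eigenvalue is positive, giving a one-dimensional unstable subspace; a short direct check shows the corresponding eigenvector \M{(1,\:1+\lambda_{+},\:D(1+\lambda_{+})/(D+\lambda_{+}))^{\intercal}} satisfies all the phase-space inclusions of \cref{lem:O:PS} at \M{\vr{0}}, so it enters \M{\st{S}(\Lambda)}. A Picard--Lindelöf argument (applied in the form of the standard contraction for the unstable invariant manifold) then delivers a smooth trajectory along this curve, supplying a singular original orbital.

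Uniqueness and maximality come for free. One-dimensionality of the unstable subspace reduces the remaining freedom to a shift in \M{\xi}, absorbed by \cref{pro:PW:symmetry}. For the terminal claim, \cref{lem:O:PS} forces \M{\ol{x}(\xi)} to be strictly increasing and bounded above by \M{1}, hence convergent to some \M{X\leq 1}. If \M{X<1} the orbital would have to accumulate on a further singular point of \M{\vr{a}_{\Lambda}} inside \M{\olst[\Lambda]{S}}; since \M{\vr{0}} and \M{\vr{1}} are the only such points this is excluded, forcing \M{X=1}. The orbital therefore reaches \M{\ol{x}=1}, either singularly at \M{\vr{1}} as \M{\xi\to+\infty} or regularly at some finite \M{\xi} on a non-diagonal terminal kernel point. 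Either way, reparametrising via \cref{lem:O:NADS} yields \M{\olst[\Lambda]{V}=\image{\olvr{v}}{\closed{0}{1}_{\Lambda}}}.

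The main obstacle is the degenerate case \M{\mu=0}, where the linearisation loses hyperbolicity (eigenvalues \M{-1,0,-D}) and no classical unstable manifold is available. Here I would abandon the linear argument in favour of a direct asymptotic study of the non-autonomous distillate near \M{x=0}: posit \M{\ol{v}_{\mathbf{y}}(x)=\alpha x+o(x)} and \M{\ol{v}_{\mathbf{z}}(x)=\beta x+o(x)}, insert into the singular ODE of \cref{lem:O:NADS}, and read off the admissible \M{(\alpha,\beta)}. The hypothesis \M{\ol{r}/\pd{z}\ol{r}\in\cont{0}{\olst{U}}} from \cref{eq:PW:r inc z} — not yet invoked in the hyperbolic case — then provides the quantitative control on \M{r} near \M{\vr{0}} needed to close a contraction on the corrective terms. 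This singular-initial-data analysis is the principal analytical hurdle; everything else reduces to tracking the already-established monotonicity.
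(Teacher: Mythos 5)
Your proposal is correct in outline but takes a genuinely different route from the paper. You build the orbit by invariant-manifold theory at \M{\vr{0}}: linearise, observe a one-dimensional unstable subspace when \M{\pd{z}\ol{r}(\vr{0})>0}, run the Perron contraction along it, and get uniqueness from one-dimensionality plus \cref{pro:PW:symmetry}; terminality then follows from the monotonicity of \cref{lem:O:PS} exactly as you argue. The paper never linearises at the origin in this Lemma. Instead it launches a sequence of ordinary Picard--Lindel\"of solutions from \emph{regular} kernel points \M{\vr{1}_\mathbf{y}\norm{\Lambda\ol{r}}{\olst[\Lambda]{S}}\exp(-2m\bnorm{\olvr{a}_{\Lambda}^{(1)}}{\olst[\Lambda]{S}})} marching down the \M{y}-axis toward \M{\vr{0}}, and uses Gronwall's inequality twice to show the resulting orbitals converge uniformly to the compact orbit. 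What the paper's construction buys is precisely the uniform treatment of the hyperbolic and non-hyperbolic origins: the eigenvalue dichotomy you invoke is deferred to \cref{lem:KP:0} and \cref{tab:KP:0}, where it is needed for asymptotics but not for existence. What your construction buys is a cleaner conceptual picture (tangency to the unstable eigenvector) at the price of a case split whose hard half you have only sketched: when \M{\pd{z}\ol{r}(\vr{0})=0} the departure is along a centre direction, centre manifolds are not unique in general (you must argue that every trajectory converging to \M{\vr{0}} as \M{\xi\to-\infty} lies on all of them, and that the reduced scalar dynamics has a unique escaping orbit as a point set), and the contraction for the corrective terms in the singular distillate of \cref{lem:O:NADS} near \M{x=0} is asserted rather than closed. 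That sketch is plausible --- your ansatz does force \M{\alpha=\beta=1}, matching the centre-eigenspace row of \cref{tab:KP:0} --- but it is the one step you would actually have to write out, and it is the step the paper's limiting-sequence argument is designed to avoid.
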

\begin{proof}
	Fundamentally, this Lemma relies on a globally Lipschitz reaction function 
	\begin{equation*}
		\Lambda \ol{r} \in \cont[1]{0}{\olst[\Lambda]{S}} \QIFF \bnorm{\Lambda\ol{r}_{}^{(1)}}{\olst[\Lambda]{S}}\deq \sup_{\vr{u}\in\st{S}(\Lambda)} \sqrt{\Lambda^2\pd{x}r(\vr{u})^2+\Lambda^2\pd{z}r(\vr{u})^2} \in \st{R} 
	\end{equation*}
	guaranteed by \crefrange{eq:PW:U def}{eq:PW:r smooth} and \cref{lem:O:PS}. The mean value theorem applies to \M{\vr{a}_{\Lambda}\in \cont{\infty}{\olst[\Lambda]{S}}}, furnishing as global Lipschitz constant for Autonomous \cref{def:PW:ADS} 
	\begin{equation*}
		\bnorm{\olvr{a}_{\Lambda}^{(1)}}{\olst[\Lambda]{S}} \deq \sup_{\st{S}(\Lambda)} \: \sup_{\olst{O}}\: \bmodulus{\mathop{\vr{a}_{\Lambda}^{(1)}\lrprm{\vr{u}}}\olvr{o}} \QT{for the unit sphere} \olst{O} \deq \setbuilder{\olvr{o}\in\st{R}^3}{\left|\olvr{o}\right|=1} 
	\end{equation*}
	The Jacobian \M{\olvr{a}_{\Lambda}^{(1)}\lrprm{\olvr{u}}} is calculated explicitly in \cref{def:KP:jacob}, from which Cauchy-Schwarz bounds 
	\begin{equation*}
		\bnorm{\olvr{a}_{\Lambda}^{(1)}}{\olst[\Lambda]{S}} \leq \sqrt{2 + \bnorm{\Lambda\ol{r}_{}^{(1)}}{\olst[\Lambda]{S}}^2 + 2D^2} 
	\end{equation*}
	Picard-Lindel\"of thus secures a unique solution \citep{Arnold92,Teschl12} for Autonomous \cref{def:PW:ADS} initiated at the regular kernel point 
	\begin{equation*}
		\olvr{u}_m(\Lambda;-m) = \vr{1}_\mathbf{y} \bnorm{\Lambda\ol{r}}{\olst[\Lambda]{S}} \: \exp\lrprm{-2m\bnorm{\olvr{a}_{\Lambda}^{(1)}}{\olst[\Lambda]{S}}} \QT{for} m\in\st{Z^+} 
	\end{equation*}
	The maximal orbital thereby constructed finishes at \M{\olvr{u}_m(\Lambda;\Xi_{m})\in\olst[\Lambda]{S} \setminus \st{S}(\Lambda)}. \Cref{lem:O:PS} confirms this orbital is terminal \M{\ol{x}_m(\Lambda;\Xi_{m})=1}, and 
	\begin{equation*}
		\vr{u}_{m+1}(\Lambda;-m) \in \closed{0}{y_{m+1}(\Lambda;-m)}^3 
	\end{equation*}
	By Gronwall's inequality 
	\begin{equation*}
		y_{m+1}(\Lambda;-m) \leq \ol{y}_{m+1}(\Lambda;-m-1) \exp\lrprm{\bnorm{\Lambda\ol{r}_{}^{(1)}}{\olst[\Lambda]{S}}} = \bnorm{\Lambda\ol{r}}{\olst[\Lambda]{S}} \: \exp\lrprm{\bnorm{\Lambda\ol{r}_{}^{(1)}}{\olst[\Lambda]{S}}-2(m+1)\bnorm{\olvr{a}_{\Lambda}^{(1)}}{\olst[\Lambda]{S}}} 
	\end{equation*}
	and again 
	\begin{align*}
		\bmodulus{\vr{u}_{m+1}(\Lambda;\xi)-\vr{u}_{m}(\Lambda;\xi)}^2 &\leq \bmodulus{\vr{u}_{m+1}(-m)-\olvr{u}_{m}(-m)}^2 \exp\lrprm{2(\xi+m)\bnorm{\olvr{a}_{\Lambda}^{(1)}}{\olst[\Lambda]{S}}} \\
		&\leq \left(3y_{m+1}^2(-m)+\ol{y}_{m}^2(-m)\right) \exp\lrprm{2(\xi+m)\bnorm{\olvr{a}_{\Lambda}^{(1)}}{\olst[\Lambda]{S}}} \\
		&\leq \bnorm{\Lambda\ol{r}}{}^2 \left(3\exp\lrprm{2\bnorm{\Lambda\ol{r}_{}^{(1)}}{\olst[\Lambda]{S}}-4\bnorm{\olvr{a}_{\Lambda}^{(1)}}{\olst[\Lambda]{S}}}+1\right) \exp\lrprm{2(\xi-m)\bnorm{\olvr{a}_{\Lambda}^{(1)}}{\olst[\Lambda]{S}}} 
	\end{align*}
	Evidently, the sequence \M{\set{\vr{u}_{m}(\Lambda;\xi)}_{\succ}} converges uniformly once \M{m} exceeds \M{\xi}, as does the sequence \M{\set{\vr{a}_{\Lambda}^{(1)}(\vr{u}_{m}(\Lambda;\xi))}_{\succ}}. Which suffices to construct interior orbit 
	\begin{equation*}
		\st{V}(\Lambda) \deq \image{\vr{u}}{\st{R}_{\Lambda}} \deq \lim\limits_{m\to\infty} \image{\vr{u}_{m}}{\st{R}_{\Lambda}} 
	\end{equation*}
	whose compactification is unique, singular original and (singular or regular) terminal. 
\end{proof}
\begin{definition}\label{def:O:MV} 
	\FirstLine{The orbital mean value \M{\MeanValue[g]{f}{X_o}{X} \in \image{f}{\open{\Xi_o}{\Xi}_{\Lambda}}}} of \M{\ol{f} \in \cont{0}{\closed{\Xi_o}{\Xi}_{\Lambda}}} under the non-decreasing weight \M{\ol{g} \in \cont{1}{\closed{\Xi_o}{\Xi}_{\Lambda}}}, \M{\image{\pd{\xi}\ol{g}}{\closed{\Xi_o}{\Xi}_{\Lambda}} \subset \olst{\st{R^+}}} is 
	\begin{equation*}
		\MeanValue[g]{f}{X_o}{X} \deq \MeanValue[g]{f}{\ol{x}(\Lambda;\Xi_o)}{\ol{x}(\Lambda;\Xi)} \deq \frac{1}{\ol{g}(\Lambda;\Xi)-\ol{g}(\Lambda;\Xi_o)} \int_{\Xi_o}^{\Xi} f(\Lambda;\xi) \pd{\xi}g(\Lambda;\xi) \dd{\xi} 
	\end{equation*}
	Dropping the weight and restoring the overline (for emphasis) abbreviates the orbital integral 
	\begin{equation*}
		\MeanValue{\ol{f}}{X_o}{X} \deq\MeanValue{\ol{f}}{\ol{x}(\Lambda;\Xi_o)}{\ol{x}(\Lambda;\Xi)} \deq \int_{\Xi_o}^{\Xi} f(\Lambda;\xi) \dd{\xi} 
	\end{equation*}
	which is obviously not a mean value in general: perhaps \M{\MeanValue{\ol{f}}{X_o}{X} \notin \image{f}{\open{\Xi_o}{\Xi}_{\Lambda}}}.
	
	The mean value property plays a crucial role in this work, warranting great care. The weights \ol{g} that follow are always admissable at origin \vr{0}, but often diverge at singular terminus \vr{1}. Hence this Definition will usually apply on \M{\closed{\Xi_o}{\Xi}_{\Lambda} \subset \clopen{-\infty}{+\infty}_{\Lambda}}, shrinking to \M{\closed{\Xi_o}{\Xi}_{\Lambda} \subset \st{R}_{\Lambda}} in case \M{f} diverges at origin. This compels our deployment of affinely extended real numbers. 
\end{definition}

\section{Kernel Points} \label{sec:KP} 
\begin{definition}\label{def:KP:asym} 
	\FirstLine{An orbit asymptote \M{\image{\vr{\tilde{v}}}{\closed{(1-\eps)\ol{X}}{(1-\eps)\ol{X}+\eps}_{\Lambda}}\subset\olst[\Lambda]{S}} to kernel point \M{\olvr{U}\in\olst[\Lambda]{V}}} obeys 
	\begin{equation*}
		\vr{\tilde{v}}(\Lambda;\ol{x}) = \olvr{u}(\Lambda;\xi) + o\lrp{\olvr{u}(\Lambda;\xi)-\olvr{U}} \QT{as} \ol{x}(\Lambda;\xi) \to \ol{X} 
	\end{equation*}
	and is said to be \M{\Lambda}-elevated iff the elevation \M{(\tilde{v}_\mathbf{y}(\Lambda;\ol{x})-\ol{x})} varies with \M{\Lambda} at some \M{\ol{x}\in\closed{(1-\eps)\ol{X}}{(1-\eps)\ol{X}+\eps}}. 
\end{definition}
\begin{definition}\label{def:KP:jacob} 
	\FirstLine{The Jacobian of Autonomous \cref{def:PW:ADS}} is the Fr\'{e}chet derivative 
	\begin{equation*}
		\olvr{a}_{\Lambda}^{(1)}\lrprm{\olvr{u}} \deq 
		\begin{pmatrix}
			-1 & 1 & 0 \\
			\Lambda \pd{x} \ol{r}(\olvr{u}) & 0 & \Lambda \pd{z} \ol{r}(\olvr{u}) \\
			0 & D & -D \\
		\end{pmatrix}
	\end{equation*}
	Its eigenvalues and eigenvectors at singular \M{\olvr{u}=\vr{1}\ol{X}\in\set{\vr{1},\vr{0}}} are denoted \M{\set{\Omega_\mathbf{j}(\Lambda;\ol{X}),\vr{1}_\mathbf{j}(\Lambda;\ol{X})}} and obey 
	\begin{equation*}
		\vr{1}_\mathbf{j}(\Lambda;\ol{X}) \deq 
		\begin{pmatrix}
			1 \\
			(1+\Omega_\mathbf{j}) \\
			D(1+\Omega_\mathbf{j})/(D+\Omega_\mathbf{j}) 
		\end{pmatrix}
	\end{equation*}
	subject to \M{\vr{1}_\mathbf{j}(\Lambda;\ol{X}) \deq \vr{1}_\mathbf{z}} iff \M{\Omega_\mathbf{j}(\Lambda;\ol{X})=-D}. The last of the three eigenmodes \M{\mathbf{j}\in\st{J}_{\succ}\deq\set{ 
	\indexminus, 
	\indexplus, 
	\indexz}_{\succ}} must be ignored whenever equidiffusivity forces \M{\ol{z}=\ol{x}}. Besides this, eigenmodes are ordered from gradual to abrupt, imposing \M{ 
	\indexminus\prec 
	\indexplus\prec 
	\indexz} on \M{\st{J}_{\succ}} such that \M{\modulus{\Omega_{ 
	\indexminus}}\leq\modulus{\Omega_{ 
	\indexplus}}}. 
\end{definition}
\begin{definition}\label{def:KP:lin} 
	\FirstLine{The linear asymptote \image{\vec{\vr{u}}}{\closed{\Xi_o}{\Xi}_\Lambda} to singular kernel point \M{\vec{\vr{u}}(\Lambda;\pm\infty)=\vr{1}\ol{X}\in\set{\vr{1},\vr{0}}}} is 
	\begin{equation*}
		\vec{\vr{u}}(\Lambda;\xi) \deq 
		\begin{pmatrix}
			\vec{x}(\Lambda;\xi) \\
			\vec{y}(\Lambda;\xi) \\
			\vec{z}(\Lambda;\xi) 
		\end{pmatrix}
		\deq \vr{1}\ol{X} + \sum_{\mathbf{j}\in\st{J}_{\succ}} \Upsilonsub_\mathbf{j}(\Lambda;\ol{X}) \vr{1}_\mathbf{j}(\Lambda;\ol{X}) \exp(\Omega_\mathbf{j}(\Lambda;\ol{X}) \xi) 
	\end{equation*}
	save the exceptional 
	\begin{equation*}
		\T{Replace }\vr{1}_\mathbf{j}(\Lambda;\ol{X}) \T{ with } 
		\begin{cases}
			\vr{1}\ol{X}-\vr{1}\vec{x}(\Lambda;\xi) &\T{if non-hyperbolic } \Omega_\mathbf{j}=0\\
			E_\mathbf{j}(\Lambda;\ol{X})\vr{1}_\mathbf{j}(\Lambda;\ol{X}) \xi + \vr{E}_\mathbf{j}(\Lambda;\ol{X}) &\T{if hyperbolic degenerate } \Omega_\mathbf{j}=\Omega_\mathbf{k} \neq 0 \T{ for } \mathbf{j}\prec\mathbf{k} 
		\end{cases}
	\end{equation*}
	It is orbital iff \M{\image{\vec{\vr{u}}}{\closed{\Xi_o}{\Xi}_\Lambda}\subset\olst[\Lambda]{S}}. 
\end{definition}
\begin{definition}\label{def:KP:eig} 
	\FirstLine{An eigenspace} is specified by stability \M{(\sgn(\Omega_\mathbf{j}(\Lambda;\ol{X})))} to accommodate linear asymptotes. Descriptions such as centre, degenerate and hyperbolic follow standard usage \citep{Jordan07,Anosov97}. Hyperbolicity is determined by reaction function \ol{r} alone, fixed by environment. The remainder of this Section catalogues every possible eigenspace. Supplementing with nonlinear terms where necessary, each linear asymptote \image{\vec{\vr{u}}}{\closed{\Xi_o}{\Xi}_\Lambda} is used to construct a \M{\Lambda}-elevated orbit asymptote \image{\vr{\tilde{v}}}{\closed{\ol{X}(1-\eps)}{\ol{X}(1-\eps)+\eps}_{\Lambda}}. 
\end{definition}
\begin{lemma}\label{lem:KP:0} 
	\FirstLine{Any singular original asymptote to \M{\vec{\vr{u}}(\Lambda;-\infty)=\vr{1}\ol{X}=\vr{0}}} obeys 
	\begin{equation*}
		\pd{z} \ol{r}(\vr{0}) \geq \pd{x} \ol{r}(\vr{0}) = 0 
	\end{equation*}
	by \cref{eq:PW:U def,eq:PW:ker r}, so its Jacobian eigenvalues are 
	\begin{align*}
		\Omega_{ 
		\indexminus}(\Lambda;0) &\deq -\left.\left(D - \sqrt{D^2 + 4D \Lambda \pd{z} \ol{r}(\vr{0})} \right)\right/2 \\
		\Omega_{ 
		\indexplus}(\Lambda;0) &\deq -\left.\left(D + \sqrt{D^2 + 4D \Lambda \pd{z} \ol{r}(\vr{0})} \right)\right/2 \\
		\Omega_{ 
		\indexz}(\Lambda;0) &\deq -1 
	\end{align*}
	Rejecting the stable eigenspace spanned by \M{\set{\vr{1}_{ 
	\indexplus}(\Lambda;0),\vr{1}_{ 
	\indexz}(\Lambda;0)}} uniquely fixes the route of departure in any problem. This is detailed in \cref{tab:KP:0}, functional form determined by environment alone. The linear asymptote in the centre eigenspace determines \M{\vec{x}(\Lambda;\xi)} via an orbital integral 
	\begin{equation*}
		\vec{\vr{u}}(\Lambda;\xi) = \vr{1} \MeanValue{\Lambda\ol{r}(\vec{\vr{u}})}{0}{\vec{x}} \quad \implies \quad \vec{x}(\Lambda;\xi) = \MeanValue{\Lambda\ol{r}(\vr{1}\vec{x})}{0}{\vec{x}} 
	\end{equation*}
\end{lemma}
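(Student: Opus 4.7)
The plan has four computational ingredients.

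\textbf{Derivatives and eigenstructure at the origin.} The kernel relation \cref{eq:PW:ker r} places the entire $\ol{x}$-axis through $\vr{0}$ inside $\ker(\ol{r})$, so $\ol{r}$ vanishes along it and differentiation yields $\pd{x}\ol{r}(\vr{0})=0$. Combined with $\ol{r}(\vr{0})=0$ and the non-negativity of $\ol{r}$ from \cref{eq:PW:U def}, the first-order Taylor term in $\ol{z}$ at $\vr{0}$ must be non-negative, forcing $\pd{z}\ol{r}(\vr{0})\geq 0$. Substituting these into \cref{def:KP:jacob} specializes the Jacobian to have first column $(-1,0,0)^\intercal$, and its characteristic polynomial factors as $(\lambda+1)[\lambda^2+D\lambda-D\Lambda\pd{z}\ol{r}(\vr{0})]$; the three roots are precisely the asserted $\Omega_\mathbf{j}$. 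The discriminant $D^2+4D\Lambda\pd{z}\ol{r}(\vr{0})\geq D^2$ delivers $\Omega_{\indexminus}\geq 0 > -D\geq\Omega_{\indexplus}$ and the inequality $|\Omega_{\indexminus}|\leq|\Omega_{\indexplus}|$ required by the $\st{J}_{\succ}$-ordering. The eigenvector formula follows by direct substitution into $(\olvr{a}_{\Lambda}^{(1)}(\vr{0})-\Omega_\mathbf{j}I)\vr{1}_\mathbf{j}=\vr{0}$, with the exceptional $\Omega_\mathbf{j}=-D$ clause handled separately.

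\textbf{Unique route of departure.} Because the stable eigenspace $\mathrm{span}(\vr{1}_{\indexplus},\vr{1}_{\indexz})$ is two-dimensional, the Gronwall-Picard-Lindel\"of linearization estimates already exploited in \cref{lem:O:CO} imply that any orbit tending to $\vr{0}$ as $\ol{x}\to 0^+$ must be tangent at $\vr{0}$ to the remaining direction $\vr{1}_{\indexminus}$. Positivity $\olvr{v}\in\olst[\Lambda]{S}$ picks out a single half-direction, fixing the route uniquely in both the hyperbolic ($\Omega_{\indexminus}>0$) and non-hyperbolic ($\Omega_{\indexminus}=0$) regimes. Since $\Omega_{\indexminus}$ visibly depends on $\Lambda$, the resulting orbit asymptote $\vr{\tilde{v}}$ is $\Lambda$-elevated in the sense of \cref{def:KP:asym}.

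\textbf{Integral equation in the centre case.} When $\Omega_{\indexminus}=0$ the eigenvector formula collapses to $\vr{1}_{\indexminus}=\vr{1}$, so the asymptote is tangent to the diagonal and the three components $\vec{x},\vec{y},\vec{z}$ agree to leading order. Integrating the middle ADS equation $\pd{\xi}\vec{y}=\Lambda r(\vec{\vr{u}})$ from $-\infty$ against the boundary condition $\vec{y}(-\infty)=0$ yields $\vec{y}=\MeanValue{\Lambda r(\vec{\vr{u}})}{0}{\vec{x}}$ in the orbital-integral notation of \cref{def:O:MV}. The leading-order coincidence of components promotes this to the stated $\vec{\vr{u}}=\vr{1}\MeanValue{\Lambda r(\vec{\vr{u}})}{0}{\vec{x}}$, and substituting $\vec{\vr{u}}=\vr{1}\vec{x}$ gives the implicit fixed-point equation for $\vec{x}$.

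The main obstacle I foresee is the non-hyperbolic case $\Omega_{\indexminus}=0$: linearization-based uniqueness of the unstable manifold fails there, and the asymptote decays only algebraically rather than exponentially. Rigour reduces to solving the scalar integral equation $\vec{x}=\MeanValue{\Lambda r(\vr{1}\vec{x})}{0}{\vec{x}}$ directly by contraction on a small left-neighbourhood of $-\infty$, leveraging smoothness of $r$ from \cref{eq:PW:r smooth} to bound Taylor remainders; \cref{tab:KP:0} presumably records the resulting explicit leading asymptotics environment by environment.
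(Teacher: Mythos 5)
Your reconstruction is correct and matches the paper's (implicit) treatment: the paper supplies no proof for this Lemma, regarding it as a direct computation from \cref{eq:PW:U def,eq:PW:ker r} and the Jacobian of \cref{def:KP:jacob}, with the departure routes catalogued in \cref{tab:KP:0}, and your derivation of the sign conditions, the characteristic factorisation $(\lambda+1)(\lambda^2+D\lambda-D\Lambda\pd{z}\ol{r}(\vr{0}))$, the rejection of the two-dimensional stable eigenspace, and the orbital-integral fixed point in the centre case reproduces exactly that. The only point you gloss over is that in the non-hyperbolic case the linear asymptote alone has zero elevation ($\vec{y}=\vec{x}$), so the $\Lambda$-elevation advertised in \cref{tab:KP:0} comes from the nonlinear supplement $(\vr{1}_\mathbf{y}+(1-D^{-1})\vr{1}_\mathbf{z})\Lambda\ol{r}(\vec{\vr{u}})$ rather than from the eigenstructure -- but that concerns the table caption, not the Lemma statement itself.
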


\begin{table}
	\begin{equation*}
		\begin{array}{|c|c|c|c|c|}
			\hline \rule{0pt}{12pt}\T{\textbf{Environment \M{\ol{r}}}} & \T{\textbf{Eigenspace}} & \vec{\vr{u}}(\Lambda;\xi)-\vr{0} & \T{\textbf{Orbit Asymptote }}\vr{\tilde{v}}(\Lambda;\vec{x}) & \set{\Upsilonsub_{ 
			\indexminus},\Upsilonsub_{ 
			\indexplus},\Upsilonsub_{ 
			\indexz}}_\wosetintable \\
			\hline \hline \rule{0pt}{10pt} \T{hyperbolic} & \T{unstable} & \Upsilonsub_{ 
			\indexminus} \vr{1}_{ 
			\indexminus} \exp(\Omega_{ 
			\indexminus}\xi) & \vec{\vr{u}}(\Lambda;\xi) & \set{+,0,0}_\wosetintable \\
			\pd{z} \ol{r}(\vr{0})>0 & & & & \phantom{\set{}_{\suc_|}} \\
			\hline \rule{0pt}{10pt} \T{non-hyperbolic} & \T{centre} & \vr{1} \MeanValue{\Lambda\ol{r}(\vec{\vr{u}})}{0}{\vec{x}} & \vec{\vr{u}}(\Lambda;\xi) & \set{1,0,0}_\wosetintable \\
			\pd{z} \ol{r}(\vr{0})=0 & & & {}+\left(\vr{1}_\mathbf{y}+(1-D^{-1})\vr{1}_\mathbf{z}\right) \Lambda\ol{r}(\vec{\vr{u}}) & \phantom{\set{}_{\suc_|}} \\
			\hline 
		\end{array}
	\end{equation*}
	\vskip\belowdisplayskip\caption{Departure from the singularity \vr{0}. The final column indicates coefficient values, (\M{+}) denoting a strictly positive unknown. The centre eigenspace incorporates nonlinear supplements for \M{\Lambda}-elevation.} 
\label{tab:KP:0} \end{table}

\begin{corollary}\label{cor:KP:0 smooth} 
	\FirstLine{The compact orbit for \M{\Lambda\in\st{R^+}} originates smoothly \M{\olvr{v}\in\cont{\infty}{\clopen{0}{1}_{\Lambda}}}}. 
\end{corollary}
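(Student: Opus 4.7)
The plan is to combine interior smoothness, guaranteed by \cref{lem:O:NADS}, with the endpoint criterion stated in that same Lemma: the residual task is to show that every one-sided derivative $\overline{v}_{\mathbf{n}}^{(k)}(\Lambda; 0)$ exists for $\mathbf{n} \in \{\mathbf{y}, \mathbf{z}\}$ and $k \geq 0$. Uniform continuity on $[0,1)_{\Lambda}$ then follows automatically.

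I would carry the compact orbit of \cref{lem:O:CO} back to the $\xi$-parameterisation, in which $\vec{u} \in C^{\infty}(\mathbb{R}_{\Lambda})$ and $x \mapsto \xi$ is a smooth bijection $(0,1) \to \mathbb{R}$ with $\partial_{\xi} x = y - x > 0$. Iterating the chain rule, each $\partial_{x}^{k}$ becomes a rational expression in $\partial_{\xi}$-derivatives of $\vec{u}$ divided by a positive power of $y - x$. Smoothness at $x = 0$ thus reduces to showing that every such ratio admits a finite limit as $\xi \to -\infty$. The orbit asymptote $\tilde{\vec{v}}$ of \cref{tab:KP:0} pins down the leading behaviour of $\vec{u}$ and $y - x$ near the origin and supplies the starting point for an asymptotic expansion of $\vec{u}$ in $\xi$.

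In the hyperbolic case $\partial_{z} r(\mathbf{0}) > 0$, the unique unstable mode $\Omega_{-} > 0$ forces $x \sim \Upsilon_{-} e^{\Omega_{-} \xi}$. A Duhamel fixed-point iteration applied to the residual $\vec{u}(\xi) - \Upsilon_{-} \vec{1}_{-} e^{\Omega_{-} \xi}$, combined with the Gronwall estimate already used in \cref{lem:O:CO}, bootstraps this residual to $O\bigl(e^{2\Omega_{-}\xi}\bigr)$ at each pass, producing an asymptotic series in $e^{\Omega_{-} \xi}$ with coefficients determined algebraically by the Jacobian of \cref{def:KP:jacob} and the Taylor coefficients of $r$. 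Inverting $e^{\Omega_{-} \xi}$ as a smooth function of $x$ converts this into an asymptotic series in $x$, from which every $\partial_{x}^{k} \overline{v}_{\mathbf{n}}(\Lambda; 0)$ is read off, while the $y - x \sim \Omega_{-} x$ denominator in the chain-rule expansion exactly absorbs the leading vanishing of each numerator.

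The non-hyperbolic case $\partial_{z} r(\mathbf{0}) = 0$ is the main obstacle. Here the orbit departs along the centre eigenspace, the pure exponential ansatz collapses, and \cref{tab:KP:0} instead forces $y - x$ comparable to $\Lambda r(\vec{u})$ itself, so the distillate $\partial_{x} \vec{v} = (v_{\mathbf{y}} - x)^{-1}(\Lambda r(\vec{v}), D v_{\mathbf{y}} - D v_{\mathbf{z}})$ is a $0/0$ indeterminate of potentially subtle order. I would Taylor-expand $r$ about $\mathbf{0}$ using smoothness \cref{eq:PW:r smooth} and then solve recursively for the Taylor coefficients of $\overline{\vec{v}}$ at $x = 0$, with uniqueness of the compact orbit in \cref{lem:O:CO} selecting the unique admissible algebraic branch at each order. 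The delicate point is that this recursion must close without introducing a logarithmic term in $x$; verifying that closure, degree by degree, is the real work of the Corollary.
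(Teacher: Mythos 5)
Your reduction is the one the paper itself relies on: \cref{lem:O:NADS} converts the claim into the existence of every one-sided derivative at $x=0$, and \cref{lem:KP:0} together with \cref{tab:KP:0} supplies the departure asymptote that seeds those limits. The hyperbolic half of your argument is sound, and for a reason worth making explicit: the origin has a single unstable eigenvalue $\Omega_{-}(\Lambda;0)>0$ while the rejected eigenvalues are strictly negative, so no resonance $k\Omega_{-}=\Omega_{\mathbf{j}}$ with $k\geq 2$ can occur; your iteration therefore produces a genuine power series in $e^{\Omega_{-}\xi}$, and since $x=\Upsilon_{-}e^{\Omega_{-}\xi}(1+o(1))$ with $\Upsilon_{-}>0$ the inversion to a series in $x$ is legitimate.

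The gap is that you defer precisely the step that carries the weight of the corollary: closure of the non-hyperbolic recursion. It does close, and the reason is elementary. Write $w=v_{\mathbf{y}}-x$ and $s=v_{\mathbf{z}}-x$; the distillate of \cref{lem:O:NADS} becomes $w\,\partial_{x}w+w=\Lambda r(x,x+s)$ and $w\,\partial_{x}s+Ds=(D-1)w$. Substituting formal series $w=\sum_{k\geq1}b_{k}x^{k}$, $s=\sum_{k\geq1}c_{k}x^{k}$, the coefficient of $x^{k}$ gives a linear system for $(b_{k},c_{k})$ whose diagonal entries are $1+(k+1)b_{1}$ and $D+kb_{1}$. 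At first order $b_{1}$ solves $b_{1}^{2}+b_{1}=\Lambda\,\partial_{z}r(\mathbf{0})$, and the branch selected by \cref{tab:KP:0} (equivalently, by uniqueness of the compact orbit in \cref{lem:O:CO}) is $b_{1}=0$ in the non-hyperbolic case and $b_{1}=\Omega_{-}>0$ in the hyperbolic one; either way both diagonal entries are strictly positive for every $k$, so the recursion is uniquely solvable order by order and no logarithmic term can enter. What then still remains, and what your proposal omits entirely, is the passage from the formal series to the actual one-sided derivatives: since the compact orbit is trapped in the phase space of \cref{lem:O:PS}, a Gronwall comparison of the orbit against the $N$-th partial sum shows their difference is $o(x^{N})$, whence l'H\^{o}pital applied to the interior derivatives supplied by \cref{lem:O:NADS} yields $\overline{v}_{\mathbf{n}}^{(k)}(\Lambda;0)$ for every $k$. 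Without that matching step the series is only formal and the corollary is not yet proved.
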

\begin{corollary}\label{cor:KP:0 orbital} 
	\FirstLine{The compactification of any plane wave is a singular terminal orbit} because the linear asymptote \M{\image{\vec{\vr{u}}}{\closed{-\infty}{\Xi}_{\Lambda}}} is evidently orbital for some \M{\Xi\in\st{R}^-}. 
\end{corollary}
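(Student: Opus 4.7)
The plan is to identify the plane wave with the compact orbit \M{\olst[\Lambda]{V}} of \cref{lem:O:CO} (up to the translation of \cref{pro:PW:symmetry}), and then to upgrade the ``(singular or regular) terminal'' conclusion of that Lemma to ``singular terminal'' by invoking \cref{def:PW:BC1}. Following the corollary's hint, the heart of the argument is to anchor the plane wave to a specific point of the orbital portion of the linear asymptote \M{\vec{\vr{u}}}, and then to deploy the Picard--Lindel\"of uniqueness already exploited in the proof of \cref{lem:O:CO}.

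First I would verify that the linear asymptote to \M{\vec{\vr{u}}(\Lambda;-\infty)=\vr{0}} described by \cref{lem:KP:0} genuinely enters the interior of \M{\olst[\Lambda]{S}} for all sufficiently negative \M{\xi}. The two admissible departure directions tabulated in \cref{tab:KP:0} --- the hyperbolic unstable eigenvector \M{\vr{1}_{\indexminus}(\Lambda;0)} with \M{\Omega_{\indexminus}>0}, and the non-hyperbolic centre contribution in direction \M{\vr{1}} --- both have strictly positive components whose ratios I would check against the slab \M{\olst[\Lambda]{S}} of \cref{lem:O:PS}, separately for the two shapes \M{D\lessgtr1} of \cref{fig:O:PS}. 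Since all components tend to \M{0} as \M{\xi\to-\infty}, small positive \M{\vec{x}} places \M{\vec{\vr{u}}} inside \M{\olst[\Lambda]{S}}, furnishing the \M{\Xi\in\st{R^-}} with \M{\image{\vec{\vr{u}}}{\closed{-\infty}{\Xi}_{\Lambda}}\subset\olst[\Lambda]{S}} promised in the corollary statement.

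Next I would identify the plane wave with the compact orbit. Any trajectory asymptotic to \M{\vr{0}} follows the unique departure route described by \cref{lem:KP:0} and \cref{def:KP:asym}, so both the plane wave and \M{\olst[\Lambda]{V}} lie on the one-dimensional unstable (or centre) manifold of \M{\vr{0}}. By \cref{pro:PW:symmetry} I may translate the plane wave in \M{\xi} so that it agrees with the compact orbit at some \M{\xi_0<\Xi}, both points then lying in \M{\st{S}(\Lambda)}. The Picard--Lindel\"of argument of \cref{lem:O:CO} --- using the global Lipschitz constant \M{\bnorm{\olvr{a}_{\Lambda}^{(1)}}{\olst[\Lambda]{S}}} --- extends this coincidence to all \M{\xi\geq\xi_0}, so the compactification of the plane wave equals \M{\olst[\Lambda]{V}}.

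Finally, \cref{def:PW:BC1} identifies the terminal kernel point of \M{\olst[\Lambda]{V}} as \M{\vr{u}(\Lambda;+\infty)=\vr{1}}, which is singular in the sense of \cref{def:O:KP}. Hence \M{\olst[\Lambda]{V}} is singular terminal rather than merely regular terminal, proving the corollary. The main obstacle is the component-wise verification in step one: the shape of \M{\olst[\Lambda]{S}} depends on whether \M{D\leq1} or \M{D\geq1}, so the check that each candidate eigenvector direction enters \M{\olst[\Lambda]{S}} must be performed carefully in both regimes --- lest the invoked ``linear asymptote'' prove spurious and the orbital matching via Picard--Lindel\"of fail.
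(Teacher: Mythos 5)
Your proposal is correct and takes essentially the same route the paper intends: the corollary carries no separate proof beyond its ``because'' clause, and your three steps --- orbitality of the linear asymptote departing \M{\vr{0}}, identification of the plane wave with the unique compact orbit of \cref{lem:O:CO} via translation symmetry and Picard--Lindel\"of uniqueness, and singular termination forced by \cref{def:PW:BC1} --- are precisely the chain the paper leaves implicit. The component-wise check you flag as the main obstacle is exactly what the paper dismisses with ``evidently,'' so fleshing it out is a faithful elaboration rather than a departure.
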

\begin{definition}\label{def:KP:class} 
	\FirstLine{Compact orbits are classified by terminal elevation \M{(\vec{v}_{\mathbf{y}}^{}(\Lambda;\vec{x})-\vec{x})\sim(\ol{v}_{\mathbf{y}}^{}(\Lambda;1)-1)}} 
	\begin{align*}
		\QT{upper class U} &\iff \quad \pd{x}\ol{v}_{\mathbf{y}}^{}(\Lambda;1)\leq 0 \leq \sgn(\ol{v}_{\mathbf{y}}^{}(\Lambda;1)-1) \\
		\QT{middle class M} &\iff \quad \sgn(\ol{v}_{\mathbf{y}}^{}(\Lambda;1)-1) \leq \pd{x}\ol{v}_{\mathbf{y}}^{}(\Lambda;1) \leq 1/2 \\
		\QT{lower class L} &\iff \quad \sgn(\ol{v}_{\mathbf{y}}^{}(\Lambda;1)-1) \leq 1/2 \leq \pd{x}\ol{v}_{\mathbf{y}}^{}(\Lambda;1) 
	\end{align*}
	whenever \M{\ol{v}_{\mathbf{y}}^{}\in\cont[]{1}{\closed{0}{1}_\Lambda}}. Classification and the following Lemmas regarding termination are illustrated in \cref{fig:KP:Class}. 
\end{definition}

\begin{SCfigure}
	[2.0][b] 
	\includegraphics[scale=0.8]{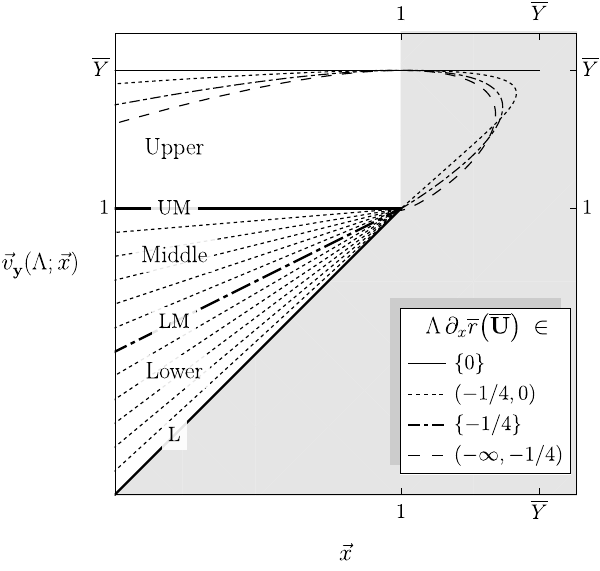} \captionsetup{margin={32pt,8pt}} \caption{Terminal linear asymptotes classified by elevation. A compact orbit is strictly upper class if and only if it is regular terminal, whence we extrapolate into illegal -- grey -- territory following \cref{lem:KP:1reg}. Plane waves are singular terminal, therefore of middle or lower class. Singular asymptotes lying on class boundaries have been thickened for clarity.} 
\label{fig:KP:Class} \end{SCfigure}

\begin{table}
	\begin{equation*}
		\begin{array}{|c|c|c|c|}
			\hline \rule{0pt}{12pt} \T{\textbf{Problem }}\Lambda & \T{\textbf{Eigenspace}} & \vec{\vr{u}}(\Lambda;\xi)-\vr{1}=\vr{\tilde{v}}(\Lambda;\vec{x})-\vr{1} & \set{\Upsilonsub_{ 
			\indexminus},\Upsilonsub_{ 
			\indexplus},\Upsilonsub_{ 
			\indexz}}_\wosetintable \\
			\hline \hline \rule{0pt}{10pt} \Lambda\pd{x} \ol{r}(\vr{1})=-1/4 & \T{stable} & \left((\Upsilonsub_{ 
			\indexminus}\xi+\Upsilonsub_{ 
			\indexplus})\vr{1}_{ 
			\indexplus} - 2 \Upsilonsub_{ 
			\indexminus} \vr{1}_\mathbf{x}\right) \exp(-\xi/2) & \T{LM}\colon \set{-1,\:,-}_\wosetintable \\
			1/2>D & \T{degenerate} & {} - 2D(2D-1)^{-2}\Upsilonsub_{ 
			\indexminus}\vr{1}_\mathbf{z} \exp(-\xi/2) + \Upsilonsub_{ 
			\indexz} \vr{1}_\mathbf{z} \exp(-D\xi)& \T{LM}\colon \set{0,-1,-}_\wosetintable \\
			\hline \rule{0pt}{10pt} \Lambda\pd{x} \ol{r}(\vr{1})=-1/4 & \T{stable} & ((\Upsilonsub_{ 
			\indexminus}\xi+\Upsilonsub_{ 
			\indexplus}) (\vr{1}_\mathbf{x}+\vr{1}_\mathbf{y}/2)-2\Upsilonsub_{ 
			\indexminus}\vr{1}_\mathbf{x})\exp(-\xi/2) & \T{LM}\colon \set{-1,\:,\:}_\wosetintable \\
			D=1/2 & \T{degenerate} & {}+ (\Upsilonsub_{ 
			\indexminus}\xi^2/8+\Upsilonsub_{ 
			\indexplus}\xi/4+\Upsilonsub_{ 
			\indexz})\vr{1}_\mathbf{z}\exp(-\xi/2) & \T{LM}\colon \set{0,-1,\:}_\wosetintable \\
			\hline \rule{0pt}{10pt} \Lambda\pd{x} \ol{r}(\vr{1})=-1/4 & \T{stable} & \left((\Upsilonsub_{ 
			\indexminus}\xi+\Upsilonsub_{ 
			\indexplus})\vr{1}_{ 
			\indexplus} - 2 \Upsilonsub_{ 
			\indexminus} \vr{1}_\mathbf{x}\right) \exp(-\xi/2) & \T{LM}\colon \set{-1,\:,\:}_\wosetintable \\
			D>1/2 & \T{degenerate} & {} - 2D(2D-1)^{-2}\Upsilonsub_{ 
			\indexminus}\vr{1}_\mathbf{z} \exp(-\xi/2) + \Upsilonsub_{ 
			\indexz} \vr{1}_\mathbf{z} \exp(-D\xi) & \T{LM}\colon \set{0,-1,\:}_\wosetintable \\
			\hline \rule{0pt}{10pt} 0>\Lambda\pd{x} \ol{r}(\vr{1})>-1/4 & \T{stable} & \Upsilonsub_{ 
			\indexminus} \vr{1}_{ 
			\indexminus} \exp(\Omega_{ 
			\indexminus}\xi) + \Upsilonsub_{ 
			\indexplus} \vr{1}_{ 
			\indexplus} \exp(\Omega_{ 
			\indexplus}\xi) & \T{L}\colon \set{-1,\:,-}_\wosetintable \\
			-D>\Omega_{ 
			\indexminus}>\Omega_{ 
			\indexplus} & & {}+ \Upsilonsub_{ 
			\indexz} \vr{1}_\mathbf{z} \exp(-D\xi) & \T{M}\colon \set{0,-1,-}_\wosetintable \\
			\hline \rule{0pt}{10pt} 0>\Lambda\pd{x} \ol{r}(\vr{1})>-1/4 & \T{stable} & \Upsilonsub_{ 
			\indexminus}\left(\vr{1}_\mathbf{x}+ (1-D)\vr{1}_\mathbf{y}\right)\exp(-D\xi) + \Upsilonsub_{ 
			\indexplus}\vr{1}_{ 
			\indexplus} \exp(\Omega_{ 
			\indexplus}\xi) & \T{L}\colon \set{-1,\:,\:}_\wosetintable \\
			-D=\Omega_{ 
			\indexminus}>\Omega_{ 
			\indexplus} & \T{degenerate} & {} + \left(\Upsilonsub_{ 
			\indexminus} D(1-D) \xi + \Upsilonsub_{ 
			\indexz}\right) \vr{1}_\mathbf{z} \exp(-D\xi) & \T{M}\colon \set{0,-1,-}_\wosetintable \\
			\hline \rule{0pt}{10pt} 0>\Lambda\pd{x} \ol{r}(\vr{1})>-1/4 & \T{stable} & \Upsilonsub_{ 
			\indexminus} \vr{1}_{ 
			\indexminus} \exp(\Omega_{ 
			\indexminus}\xi) + \Upsilonsub_{ 
			\indexplus} \vr{1}_{ 
			\indexplus} \exp(\Omega_{ 
			\indexplus}\xi) & \T{L}\colon \set{-1,\:,\:}_\wosetintable \\
			\Omega_{ 
			\indexminus}>-D>\Omega_{ 
			\indexplus} & & {}+ \Upsilonsub_{ 
			\indexz} \vr{1}_\mathbf{z} \exp(-D\xi) & \T{M}\colon \set{0,-1,-}_\wosetintable \\
			\hline \rule{0pt}{10pt} 0>\Lambda\pd{x} \ol{r}(\vr{1})>-1/4 & \T{stable} & \Upsilonsub_{ 
			\indexminus}\vr{1}_{ 
			\indexminus} \exp(\Omega_{ 
			\indexminus}\xi) + \Upsilonsub_{ 
			\indexplus}\left(\vr{1}_\mathbf{x}+ (1-D)\vr{1}_\mathbf{y}\right)\exp(-D\xi) & \T{L}\colon \set{-1,\:,\:}_\wosetintable \\
			\Omega_{ 
			\indexminus}>\Omega_{ 
			\indexplus}=-D & \T{degenerate} & {} + \left(\Upsilonsub_{ 
			\indexplus} D(1-D) \xi + \Upsilonsub_{ 
			\indexz}\right) \vr{1}_\mathbf{z} \exp(-D\xi) & \T{M}\colon \set{0,-1,\:}_\wosetintable \\
			\hline \rule{0pt}{10pt} 0>\Lambda\pd{x} \ol{r}(\vr{1})>-1/4 & \T{stable} & \Upsilonsub_{ 
			\indexminus} \vr{1}_{ 
			\indexminus} \exp(\Omega_{ 
			\indexminus}\xi) + \Upsilonsub_{ 
			\indexplus} \vr{1}_{ 
			\indexplus} \exp(\Omega_{ 
			\indexplus}\xi) & \T{L}\colon \set{-1,\:,\:}_\wosetintable \\
			\Omega_{ 
			\indexminus}>\Omega_{ 
			\indexplus}>-D & & {}+ \Upsilonsub_{ 
			\indexz} \vr{1}_\mathbf{z} \exp(-D\xi) & \T{M}\colon \set{0,-1,\:}_\wosetintable \\
			\hline 
		\end{array}
	\end{equation*}
	\vskip\belowdisplayskip\caption{Approach to the hyperbolic singularity \M{\vr{1}} on condition \M{\pd{x} \ol{r}(\vr{1})<0}. The final column indicates class initials and coefficient values, (\M{-}) denoting a strictly negative unknown and (\M{\:}) an unrestricted coefficient. The orbit asymptote \M{\vr{\tilde{v}}(\Lambda;\vec{x})\deq\vec{\vr{u}}(\Lambda;\xi)} is \M{\Lambda}-elevated throughout.} 
\label{tab:KP:1Hyp} \end{table}

\begin{table}
	\begin{tabular}
		{|c|c|c|c|c|} \hline \rule{0pt}{12pt}\textbf{Eigenspace} & \M{\vec{\vr{u}}(\Lambda;\xi)-\vr{1}\ol{X}} & \textbf{Orbit Asymptote }\M{\vr{\tilde{v}}(\Lambda;\vec{x})} & \M{\set{\Upsilonsub_{ 
		\indexminus},\Upsilonsub_{ 
		\indexplus},\Upsilonsub_{ 
		\indexz}}_\wosetintable} \\
		\hline \hline \rule{0pt}{10pt}stable & \M{\Upsilonsub_{ 
		\indexplus} \vr{1}_\mathbf{x} \exp(-\xi) + \Upsilonsub_{ 
		\indexz} \vr{1}_\mathbf{z} \exp(-D\xi)} & \M{\vec{\vr{u}}(\Lambda;\xi) - \vr{1}\MeanValue{\Lambda\ol{r}(\vec{\vr{u}})}{\vec{x}}{1}} & UM\M{\colon \set{0,-1,-}_\wosetintable} \\
		\hline \rule{0pt}{10pt}centre & \M{-\vr{1} \MeanValue{\Lambda\ol{r}(\vec{\vr{u}})}{\vec{x}}{1}} & \M{\vec{\vr{u}}(\Lambda;\xi) + \left(\vr{1}_\mathbf{y}+(1-D^{-1})\vr{1}_\mathbf{z}\right) \Lambda\ol{r}(\vec{\vr{u}})} & L\M{\colon \set{-1,\:,\:}_\wosetintable} \\
		\hline 
	\end{tabular}
	\vskip\belowdisplayskip\vskip\belowdisplayskip\caption{Approach to the non-hyperbolic singularity \M{\vr{1}} on condition \M{\pd{x} \ol{r}(\vr{1})=0}. The final column indicates class initials and coefficient values, (\M{-}) denoting a strictly negative unknown. Both eigenspaces incorporate nonlinear supplements for \M{\Lambda}-elevation.} 
\label{tab:KP:1nonHyp} \end{table}
\begin{lemma}\label{lem:KP:1} 
	\FirstLine{Any singular terminal asymptote to \M{\vec{\vr{u}}(\Lambda;+\infty)=\vr{1}\ol{X}=\vr{1}}} obeys 
	\begin{equation*}
		\pd{x} \ol{r}(\vr{1}) \leq \pd{z} \ol{r}(\vr{1}) = 0 
	\end{equation*}
	by \cref{eq:PW:U def,eq:PW:ker r}, so its Jacobian eigenvalues are 
	\begin{align*}
		\Omega_{ 
		\indexminus}(\Lambda;1) &\deq -\left.\left(1 - \sqrt{1 + 4\Lambda \pd{x} \ol{r}(\vr{1})} \right)\right/2 \\
		\Omega_{ 
		\indexplus}(\Lambda;1) &\deq -\left.\left(1 + \sqrt{1 + 4\Lambda \pd{x} \ol{r}(\vr{1})} \right)\right/2 \\
		\Omega_{ 
		\indexz}(\Lambda;1) &\deq -D 
	\end{align*}
	Because \M{\vr{1}_{ 
	\indexz}(\Lambda;1)\deq\vr{1}_\mathbf{z}}, approach from phase space interior \M{\st{S}(\Lambda)} requires 
	\begin{equation*}
		\bmodulus{\Upsilonsub_{ 
		\indexminus}(\Lambda;1)}+\bmodulus{\Upsilonsub_{ 
		\indexplus}(\Lambda;1)}>0 
	\end{equation*}
	necessitating 
	\begin{equation*}
		\Lambda \pd{x} \ol{r}(\vr{1}) \geq -1/4 
	\end{equation*}
	Upon violation, \M{\Omega_{ 
	\indexminus}(\Lambda;1)}, \M{\Omega_{ 
	\indexplus}(\Lambda;1)} are complex conjugate, heralding prior regular termination at \M{\olvr{U}\neq\vr{1}}.
	
	\Cref{pro:PW:symmetry} is used to eliminate one unknown in each eigenspace, translation in \M{\xi} conveniently forcing every strictly negative leading coefficient to \M{-1}.
	
	Environment alone determines hyperbolicity at \M{\vr{1}}. The hyperbolic eigenspaces given \M{\pd{x} \ol{r}(\vr{1})<0} are catalogued in \cref{tab:KP:1Hyp}, verifying \M{\Lambda}-elevation of the orbit asymptote \M{\vr{\tilde{v}}(\Lambda;\vec{x})\deq\vec{\vr{u}}(\Lambda;\xi)}. The non-hyperbolic eigenspaces given \M{\pd{x} \ol{r}(\vr{1})=0} are catalogued in \cref{tab:KP:1nonHyp}, the centre eigenspace determining \M{\vec{x}(\Lambda;\xi)} via an orbital integral 
	\begin{equation*}
		\vec{\vr{u}}(\Lambda;\xi)-\vr{1} = -\vr{1} \MeanValue{\Lambda\ol{r}(\vec{\vr{u}})}{\vec{x}}{1} \quad \implies \quad \vec{x}(\Lambda;\xi)-1 = -\MeanValue{\Lambda\ol{r}(\vr{1}\vec{x})}{\vec{x}}{1} 
	\end{equation*}
\end{lemma}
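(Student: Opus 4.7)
My plan mirrors the treatment of the origin in \cref{lem:KP:0}, with the terminal geometry flipped. First I would extract the partial derivative constraints. Because \M{\ol{r}} is non-negative on \M{\olst{U}} by \cref{eq:PW:U def} and vanishes along the branch \M{\set{\ol{x}=1}} of \M{\ker(\ol{r})} per \cref{eq:PW:ker r}, the \M{\ol{z}}-direction lies tangent to the kernel surface at \M{\vr{1}}, forcing \M{\pd{z}\ol{r}(\vr{1})=0}; meanwhile decreasing \M{\ol{x}} below \M{1} exits the kernel into strictly positive reaction, so \M{\pd{x}\ol{r}(\vr{1})\leq 0}. Substituting these into \cref{def:KP:jacob}, the \M{\ol{z}}-column of the Jacobian at \M{\vr{1}} has only its diagonal entry \M{-D} non-zero, so expansion of the characteristic determinant along that column factors it as \M{-(D+\Omega)(\Omega^2+\Omega-\Lambda\pd{x}\ol{r}(\vr{1}))}: the linear factor yields eigenpair \M{(-D,\vr{1}_\mathbf{z})}, and the quadratic delivers the displayed \M{\Omega_{\indexminus},\Omega_{\indexplus}}.

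Next I would derive the admissibility condition. Because the \M{\indexz}-eigenmode is forced to \M{\vr{1}_\mathbf{z}} by the exception in \cref{def:KP:jacob}, it contributes nothing to \M{(\vec{x},\vec{y})}; departure from \M{\vr{1}} into the interior \M{\st{S}(\Lambda)} therefore requires \M{\bmodulus{\Upsilonsub_{\indexminus}(\Lambda;1)}+\bmodulus{\Upsilonsub_{\indexplus}(\Lambda;1)}>0}. If the discriminant \M{1+4\Lambda\pd{x}\ol{r}(\vr{1})} is negative then \M{\Omega_{\indexminus},\Omega_{\indexplus}} are complex conjugate and the linearized flow spirals about \M{\vr{1}}, so any candidate asymptote must cross the boundary \M{\set{\ol{y}=\ol{x}}} of \M{\olst[\Lambda]{S}} at finite \M{\xi} and hence, by \cref{lem:O:PS}, terminate regularly earlier at some \M{\olvr{U}\ne\vr{1}}. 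Thus exactly \M{\Lambda\pd{x}\ol{r}(\vr{1})\geq -1/4} supports a singular terminal asymptote. The translation freedom of \cref{pro:PW:symmetry} then eliminates one coefficient per eigenspace, normalized so every strictly negative leading coefficient equals \M{-1} as in the final column of each table.

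The main labour is a case split by ordering and coincidences among \M{\set{\Omega_{\indexminus},\Omega_{\indexplus},-D}}. For the hyperbolic regime \M{\pd{x}\ol{r}(\vr{1})<0} I enumerate the stable-degenerate case \M{\Omega_{\indexminus}=\Omega_{\indexplus}=-1/2} further split by \M{D} less than, equal to, or greater than \M{1/2}, and the generic hyperbolic subcases \M{0>\Lambda\pd{x}\ol{r}(\vr{1})>-1/4} split by where \M{-D} sits among the roots. In each arrangement I assemble \M{\vec{\vr{u}}(\Lambda;\xi)-\vr{1}} via \cref{def:KP:lin}, inserting the Jordan-block polynomial corrections whenever two eigenvalues coincide, read off the class of \cref{def:KP:class} from the dominant exponential and its coefficient \M{\pd{x}\vec{v}_\mathbf{y}(\Lambda;1)}, and verify \M{\Lambda}-elevation by noting that \M{\Omega_{\indexminus},\Omega_{\indexplus}} depend smoothly and non-trivially on \M{\Lambda}, so the dominant coefficient in \M{(\tilde{v}_\mathbf{y}-\vec{x})} varies with \M{\Lambda}. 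This fills \cref{tab:KP:1Hyp}.

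The non-hyperbolic case \M{\pd{x}\ol{r}(\vr{1})=0} demotes \M{\Omega_{\indexminus}} to zero, turning that direction into a centre, so the linear asymptote alone is insufficient. Mimicking the nonlinear supplement of \cref{lem:KP:0}, I replace the linear centre term by \M{-\vr{1}\MeanValue{\Lambda\ol{r}(\vec{\vr{u}})}{\vec{x}}{1}}, which is a well-defined orbital integral by \cref{def:O:MV} because \M{\ol{r}} vanishes continuously at \M{\vr{1}}; projecting onto \M{\vr{1}_\mathbf{x}} then yields the stated implicit equation for \M{\vec{x}}, completing \cref{tab:KP:1nonHyp}. The principal obstacle is combinatorial rather than analytic: keeping the sign conventions, Jordan-block polynomial corrections, and \M{\Lambda}-elevation checks consistent across every row of both tables while respecting the monotonicity baked into \cref{lem:O:PS}.
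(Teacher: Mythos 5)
Your proposal is correct and follows essentially the same route as the paper, which supplies no separate proof for this Lemma: the reasoning is embedded in the statement itself (kernel geometry forcing \M{\pd{z}\ol{r}(\vr{1})=0} and \M{\pd{x}\ol{r}(\vr{1})\leq 0}, the factored characteristic polynomial, exclusion of the pure \M{\vr{1}_\mathbf{z}} mode, the spiral/monotonicity conflict for a negative discriminant) and in the case-by-case enumeration of \cref{tab:KP:1Hyp,tab:KP:1nonHyp}, including the Jordan-block corrections and the nonlinear supplements securing \M{\Lambda}-elevation. Your reconstruction matches all of these steps.
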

\begin{lemma}\label{lem:KP:1reg} 
	\FirstLine{Any regular terminal asymptote \M{\image{\vec{\vr{u}}}{\closed{\Xi_o}{\Xi}_{\Lambda}}} to \M{\vec{\vr{u}}(\Lambda,\Xi) = \olvr{U}\deq(1,\ol{Y},\ol{Z})^\intercal}} is strictly upper class and linearizes as shown in \cref{fig:KP:Class}. The regular linear asymptote \M{\vec{\vr{v}}(\Lambda;\vec{x})\deq\vec{\vr{u}}(\Lambda;\xi)} is given by \cref{lem:KP:1} subject to 
	\begin{align*}
		&\T{Replace } \pd{x} \ol{r}(\vr{1}) \T{ with } \pd{x} \ol{r}(\olvr{U}) \\
		&\T{Replace } \ol{X} \T{ with } \ \ol{Y} +(1-\ol{Y})\modulus{\sgn(\pd{x} \ol{r}(\vr{1}))} \\
		&\T{Set } \set{\Upsilonsub_{ 
		\indexminus},\Upsilonsub_{ 
		\indexplus},\Upsilonsub_{ 
		\indexz}}_{\suc} \T{ such that } \ \M{\modulus{\Upsilonsub_{ 
		\indexplus}(\Lambda;1)}=1}, \M{\mathop{\arg} \Upsilonsub_{ 
		\indexplus}(\Lambda;1) \in \open{\pi/2}{3\pi/2}}\\
		&\QT{and}\vr{\tilde{v}}(\Lambda;\ol{x}) = \olvr{U} + \pd{\xi} \olvr{u}(\Lambda;\Xi) (\xi-\Xi) + \pd{\xi}\pd{\xi} \olvr{u}(\Lambda;\Xi) (\xi-\Xi)^2/2 + o\lrp{(\Xi-\xi)^2} 
	\end{align*}
	This entails 
	\begin{align*}
		\log \lrprm{-\Upsilonsub_{ 
		\indexplus}(\Lambda;1)} + \Xi \: \Omega_{ 
		\indexplus}(\Lambda;1) &\deq \log\lrprm{\ol{Y}-1} - \log\sqrt{1+4\Lambda\pd{x} \ol{r}\lrp{\olvr{U}}} \\
		\Upsilonsub_{ 
		\indexminus}(\Lambda;1) &\deq -\Upsilonsub_{ 
		\indexplus}(\Lambda;1) \exp\lrprm{-\Xi \sqrt{1+4\Lambda\pd{x} \ol{r}\lrp{\olvr{U}}}} 
	\end{align*}
	unless \M{\Lambda\pd{x} \ol{r}\lrp{\olvr{U}}=-1/4}, when 
	\begin{align*}
		\Upsilonsub_{ 
		\indexplus}(\Lambda;1) &\deq -1 \\
		\Xi \: \Omega_{ 
		\indexplus}(\Lambda;1) &\deq \log\lrprm{\ol{Y}-1} + \log\lrprm{\Xi-2} \\
		\Upsilonsub_{ 
		\indexminus}(\Lambda;1) &\deq 1/(\Xi-2) 
	\end{align*}
	Computation of \M{\Upsilonsub_{ 
	\indexz}(\Lambda;1)} is equally viable, but tediously unedifying. Suffice to say that \M{\ol{Y}=1} is enough to force singular termination \M{\Xi=+\infty}, thence \M{\ol{Z}=1}. In non-hyperbolic environments the orbit must asymptote within the stable eigenspace and \M{\Upsilonsub_{ 
	\indexminus}} is irrelevant.
	
	The limit \M{\vr{\tilde{v}}(\Lambda,+\infty)=\vr{1}} in hyperbolic environments, else \M{\vr{\tilde{v}}(\Lambda,+\infty)=\vr{1}\ol{Y}} assuming \M{\image{\ol{r}}{\image{\vr{\tilde{v}}}{\closed{1}{\ol{X}}_{\Lambda}}}=\set{0}}. This is entirely virtual, however, as \M{\image{\olvr{\vr{u}}}{\open{\Xi}{+\infty}_{\Lambda}}} is formally undefined and not supposed to satisfy anything. Rather, the linear regular asymptote is simply a lucid expression of the Taylor series about \M{\xi=\Xi}. 
\end{lemma}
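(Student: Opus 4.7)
The lemma effectively transplants the eigenstructure of \cref{lem:KP:1} from the singular kernel point $\vr{1}$ to a regular one $\olvr{U}\deq(1,\ol{Y},\ol{Z})^\intercal$, at which the orbital crosses the boundary $\ol{x}=1$ transversally at finite $\Xi$. The plan is to establish strict transversality $\ol{Y}>1$, port over the eigenstructure of \cref{lem:KP:1}, and then match the linear asymptote against the Taylor expansion of the orbital about $\xi=\Xi$.

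The principal substantive step is ruling out $\ol{Y}=1$. Supposing it for contradiction, consider the flat trajectory $\olvr{u}(\xi)\deq\vr{1}+(\ol{Z}-1)e^{-D(\xi-\Xi)}\vr{1}_\mathbf{z}$: its first two components are constant with $\pd{\xi}\ol{x}=\ol{y}-\ol{x}=0$ and $\pd{\xi}\ol{y}=\Lambda\ol{r}(1,1,\ol{z})=0$ (the latter by \cref{eq:PW:ker r}, since $\ol{r}$ vanishes identically on $\ol{x}=1$), while the third obeys $\pd{\xi}\ol{z}=D(1-\ol{z})=D(\ol{y}-\ol{z})$. The flat trajectory thus solves Autonomous \cref{def:PW:ADS} and passes through $\olvr{U}$ at $\xi=\Xi$. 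Picard-Lindel\"of uniqueness (with $\olvr{a}_\Lambda\in\cont{\infty}{\olst{U}}$ providing local Lipschitz continuity, as in \cref{lem:O:CO}) then identifies this flat orbit as the unique solution through $(\Xi,\olvr{U})$; but it never enters $\st{S}(\Lambda)$, contradicting the hypothesis that the regular terminal asymptote arrives from the interior. Hence $\ol{Y}>1$ strictly, and strict upper class follows at once: $\pd{x}\ol{v}_\mathbf{y}(\Lambda;1)=\Lambda\ol{r}(\olvr{U})/(\ol{Y}-1)=0$ whereas $\ol{v}_\mathbf{y}(\Lambda;1)=\ol{Y}>1$.

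With transversality secured, smoothness of the orbital (via \cref{cor:KP:0 smooth} combined with $\olvr{a}_\Lambda\in\cont{\infty}{\olst{U}}$, extended past $\Xi$ by Picard-Lindel\"of) supplies the quadratic Taylor polynomial $\vr{\tilde{v}}(\Lambda;\ol{x})$ directly. The Jacobian $\olvr{a}_\Lambda^{(1)}(\olvr{U})$ inherits the block structure of \cref{lem:KP:1} since $\pd{z}\ol{r}(\olvr{U})=0$ (varying $\ol{z}$ while keeping $\ol{x}=1$ preserves $\ker(\ol{r})$); only $\pd{x}\ol{r}(\vr{1})$ must become $\pd{x}\ol{r}(\olvr{U})$. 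The constant inhomogeneity $\olvr{a}_\Lambda(\olvr{U})$ is absorbed by shifting the linear asymptote's base point to $\vr{1}\ol{X}$ with $\ol{X}\deq\ol{Y}+(1-\ol{Y})\modulus{\sgn(\pd{x}\ol{r}(\vr{1}))}$, that is $\ol{X}=1$ in the hyperbolic environment and $\ol{X}=\ol{Y}$ in the non-hyperbolic one (where $\Omega_{\indexplus}=0$ supplies a centre direction absorbing the forcing).

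It then remains to match the linear asymptote ansatz from \cref{def:KP:lin} (with these modifications) against the Taylor polynomial at $\xi=\Xi$, obtaining two complex-valued equations for $\Upsilonsub_{\indexplus}$ and $\Upsilonsub_{\indexminus}$. The phase normalization $\modulus{\Upsilonsub_{\indexplus}(\Lambda;1)}=1$, $\arg\Upsilonsub_{\indexplus}\in\open{\pi/2}{3\pi/2}$ selects the principal branch when $1+4\Lambda\pd{x}\ol{r}(\olvr{U})<0$ renders $\Omega_{\indexminus},\Omega_{\indexplus}$ complex conjugate, and \cref{pro:PW:symmetry} eliminates the residual $\xi$-translation. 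Separating real and imaginary parts in the matching equations produces the displayed log-relations for $\Xi$ and $\Upsilonsub_{\indexminus}$. The trickiest bookkeeping is the coalescent case $\Lambda\pd{x}\ol{r}(\olvr{U})=-1/4$, where $\Omega_{\indexminus}=\Omega_{\indexplus}=-1/2$ and the exponential basis collapses: the ansatz must be enriched by a $\xi e^{-\xi/2}$ mode, modifying the matching equations to produce $\Upsilonsub_{\indexminus}=1/(\Xi-2)$ and the $\log(\Xi-2)$ correction. The remaining claims — virtual limits at $\xi=+\infty$ and the irrelevance of $\Upsilonsub_{\indexz}$ in non-hyperbolic environments — follow from the eigenvalue signs and the choice of base point.
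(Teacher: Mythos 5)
The paper supplies no formal proof of this Lemma, presenting it instead as a catalogue whose entries are, in its own words, ``simply a lucid expression of the Taylor series about \M{\xi=\Xi}''; your proposal follows exactly that route -- matching the modified linear-asymptote ansatz against the boundary data at \M{\xi=\Xi} -- and the displayed coefficient formulas (including the degenerate \M{\Lambda\pd{x}\ol{r}(\olvr{U})=-1/4} case) do check out under that matching. Your Picard--Lindel\"of uniqueness argument on the invariant face \M{\ol{x}=1} to exclude \M{\ol{Y}=1} at finite \M{\Xi}, and the resulting strict upper classification via \M{\pd{x}\ol{v}_\mathbf{y}(\Lambda;1)=\Lambda\ol{r}(\olvr{U})/(\ol{Y}-1)=0} with \M{\ol{Y}>1}, correctly supplies the detail the paper merely asserts (``Suffice to say that \M{\ol{Y}=1} is enough to force singular termination'').
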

\begin{corollary}\label{cor:KP:1 smooth} 
	\FirstLine{The compact orbit elevation is differentiable \M{\ol{v}_\mathbf{y}\in\cont{1}{\closed{0}{1}_{\Lambda}}}}. In contrast, neither \M{\pd{x}\ol{v}_\mathbf{z}(\Lambda;1)} nor \M{\pd{x}\pd{x}\ol{v}_\mathbf{y}(\Lambda;1)} need exist. 
\end{corollary}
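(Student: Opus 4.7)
The plan is to combine the terminal asymptote catalogues of \cref{lem:KP:1,lem:KP:1reg} with the smoothness already secured away from the terminus. Interior smoothness \M{\vr{v}\in\cont{\infty}{\open{0}{1}_\Lambda}} comes from \cref{lem:O:NADS}, and \cref{cor:KP:0 smooth} extends this to \M{\clopen{0}{1}_\Lambda}, so the entire question is how \M{\olvr{v}} approaches \M{\vr{1}} as \M{\ol{x}\to1^-}.

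If the compact orbit is regular terminal then \cref{lem:KP:1reg} supplies a full Taylor expansion at a finite \M{\Xi} and upgrades everything to \M{\cont{\infty}{\closed{0}{1}_\Lambda}}. The substantive case is singular terminal, governed by \cref{lem:KP:1}. There I would invert the dominant eigenmode \M{\ol{x}-1\sim\Upsilonsub_{\indexminus}\exp(\Omega_{\indexminus}\xi)} (absorbing the logarithmic and polynomial corrections of the degenerate rows of \cref{tab:KP:1Hyp}) to convert \M{\xi}-asymptotics into \M{\ol{x}}-asymptotics. Under this substitution each subdominant exponential \M{\exp(\Omega_{\indexplus}\xi)} or \M{\exp(-D\xi)} becomes a power \M{\lvert\ol{x}-1\rvert^{\Omega_{\indexplus}/\Omega_{\indexminus}}} or \M{\lvert\ol{x}-1\rvert^{D/\lvert\Omega_{\indexminus}\rvert}}, with strictly positive exponent thanks to the ordering \M{\indexminus\pre\indexplus} and positivity of \M{D}.

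Reading off the \M{\mathbf{y}}-component of every row of \cref{tab:KP:1Hyp,tab:KP:1nonHyp} then yields \M{\ol{v}_\mathbf{y}(\Lambda;\ol{x})-1=(1+\Omega_{\indexminus})(\ol{x}-1)+o(\ol{x}-1)}, so \M{\pd{x}\ol{v}_\mathbf{y}(\Lambda;1)=1+\Omega_{\indexminus}} exists; the subdominant fractional powers carry exponents strictly above \M{1}, hence their derivatives vanish at \M{\ol{x}=1} and \M{\pd{x}\ol{v}_\mathbf{y}} is continuous there. The non-hyperbolic centre case works the same way, the nonlinear supplement \M{(\vr{1}_\mathbf{y}+(1-D^{-1})\vr{1}_\mathbf{z})\Lambda\ol{r}(\vec{\vr{u}})} being \M{o(\ol{x}-1)} since both \M{\pd{x}\ol{r}(\vr{1})} and \M{\pd{z}\ol{r}(\vr{1})} vanish by \cref{lem:KP:1}. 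This proves \M{\ol{v}_\mathbf{y}\in\cont{1}{\closed{0}{1}_\Lambda}}.

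For the counter-assertions I need only exhibit offending exponents. The \M{\mathbf{z}}-asymptote acquires the mode \M{\Upsilonsub_{\indexz}\vr{1}_\mathbf{z}\exp(-D\xi)}, generically with nonzero coefficient as marked \M{(-)} in the final column of \cref{tab:KP:1Hyp}; translated into \M{\lvert\ol{x}-1\rvert^{D/\lvert\Omega_{\indexminus}\rvert}}, its first derivative diverges whenever \M{D<\lvert\Omega_{\indexminus}\rvert}, readily arranged by taking \M{D} small with \M{\Lambda\pd{x}\ol{r}(\vr{1})} near \M{-1/4}. Similarly the subdominant \M{\mathbf{y}}-power \M{\lvert\ol{x}-1\rvert^{\Omega_{\indexplus}/\Omega_{\indexminus}}} has divergent second derivative whenever \M{\Omega_{\indexplus}/\Omega_{\indexminus}<2}, which holds across most of the hyperbolic regime. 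The main obstacle I foresee is the bookkeeping through the degenerate rows of \cref{tab:KP:1Hyp}, where the logarithmic factor \M{\xi\sim\log\lvert\ol{x}-1\rvert} must be tracked without accidentally promoting any subdominant exponent past \M{1} or \M{2}.
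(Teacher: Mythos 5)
Your proposal is correct and follows the paper's intended route: the corollary carries no separate proof and is meant to fall out of the terminal asymptote catalogues of \cref{lem:KP:1,lem:KP:1reg} (which give existence of the one-sided derivative \M{\pd{x}\ol{v}_\mathbf{y}(\Lambda;1)} directly from the \M{o(\ol{x}-1)} expansion, and exhibit the offending fractional powers \M{\modulus{\ol{x}-1}^{D/\modulus{\Omega_{
\indexminus}}}} and \M{\modulus{\ol{x}-1}^{\Omega_{
\indexplus}/\Omega_{
\indexminus}}} for the two counter-assertions) combined with \cref{cor:KP:0 smooth} at the origin and the endpoint criterion of \cref{lem:O:NADS} -- which, note, is the rigorous way to upgrade endpoint existence to uniform continuity of \M{\pd{x}\ol{v}_\mathbf{y}} on \M{\closed{0}{1}_{\Lambda}}, rather than differentiating the asymptotic expansion term by term as you do. One slip worth correcting: \M{\pd{x}\ol{v}_\mathbf{y}(\Lambda;1)=1+\Omega_{
\indexminus}} only when \M{\Upsilonsub_{
\indexminus}\neq0} (lower class); for middle class orbits \M{\Upsilonsub_{
\indexminus}=0} and the surviving dominant mode gives \M{1+\Omega_{
\indexplus}} (cf.\ \cref{tab:KP:class}), though this does not affect the existence claim.
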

\begin{corollary}\label{cor:KP:class} 
	\FirstLine{The compact orbit \olst[\Lambda]{V} of any problem is classified} in \cref{tab:KP:class}, following \cref{def:KP:class}. There is no plane wave for problem \M{\Lambda} if its compact orbit is strictly upper class U. Otherwise, the unique plane wave for \M{\Lambda} is the interior \M{\st{V}(\Lambda)} of its compact orbit. 
\end{corollary}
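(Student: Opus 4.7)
The strategy is to reduce the statement to a single dichotomy on the compact orbit \M{\olst[\Lambda]{V}}: regular terminal versus singular terminal. \Cref{lem:O:CO} already delivers a unique such orbit per problem \M{\Lambda}, originating singularly at \M{\vr{0}} and finishing singularly or regularly at some terminal kernel point. So the entire existence, uniqueness and classification question is really a question about this one orbit.

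First I would settle uniqueness. \Cref{cor:KP:0 orbital} shows that any plane wave compactifies to a singular original orbit finishing at \M{\vr{1}}; since \cref{lem:O:CO} provides a unique compact (singular original) orbit, any plane wave must coincide with the interior \M{\st{V}(\Lambda)} of \M{\olst[\Lambda]{V}}. Hence the existence of a plane wave is equivalent to singular termination of the compact orbit.

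Next I would match this termination dichotomy to the elevation classification of \cref{def:KP:class}. \Cref{cor:KP:1 smooth} gives \M{\ol{v}_\mathbf{y}\in\cont{1}{\closed{0}{1}_{\Lambda}}}, so the class labels are well-defined. Inspecting \cref{tab:KP:1Hyp,tab:KP:1nonHyp} row by row, every admissible singular terminal asymptote has \M{\ol{v}_\mathbf{y}(\Lambda;1)=1} and a stable-eigenspace slope consistent with class L or M but never strictly U (which is flagged in each rightmost column). Conversely, \cref{lem:KP:1reg} asserts explicitly that regular terminal asymptotes are strictly upper class (see \cref{fig:KP:Class}). Together these give the biconditional: the compact orbit is strictly upper class if and only if it terminates regularly, if and only if no plane wave exists. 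This simultaneously proves the existence/non-existence statement and supplies the classification of \cref{tab:KP:class} as a routine compilation of the environment--\M{\Lambda} cases already tabulated in \cref{tab:KP:1Hyp,tab:KP:1nonHyp} and in the regular-terminal branch of \cref{lem:KP:1reg}.

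The main obstacle is not analytic but bookkeeping: carefully verifying that boundary cases (the discriminant threshold \M{\Lambda\pd{x}\ol{r}(\vr{1})=-1/4}, the degeneracies \M{\Omega_\mathbf{j}=-D}, and the non-hyperbolic branch \M{\pd{x}\ol{r}(\vr{1})=0}) are assigned the class they deserve. Degenerate polynomial prefactors in \M{\xi} can tilt the leading behaviour of \M{\pd{x}\ol{v}_\mathbf{y}(\Lambda;1)} in subtle ways, and the thresholds of \cref{def:KP:class} at slopes \M{0} and \M{1/2} must be read off from each asymptote formula. Once this has been done, the corollary is essentially a restatement of \crefrange{lem:KP:1}{lem:KP:1reg} under the uniqueness umbrella provided by \cref{lem:O:CO}.
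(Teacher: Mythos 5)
Your proposal is correct and matches the paper's intended justification: the paper offers no separate proof for this corollary, leaving it to follow exactly as you argue from the uniqueness of the compact orbit in \cref{lem:O:CO}, the singular-terminal compactification of any plane wave in \cref{cor:KP:0 orbital}, and the equivalence of regular termination with strict upper class from \cref{lem:KP:1reg} and the terminal asymptote tables.
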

\begin{corollary}\label{cor:KP:lim} 
	\FirstLine{The limiting upper class orbital is a middle class plane wave} 
	\begin{equation*}
		\Upsilonsub_{ 
		\indexminus}(\Lambda;1) \to 0 \QT{as} \ol{v}_\mathbf{y}(\Lambda;1) \to 1 
	\end{equation*}
\end{corollary}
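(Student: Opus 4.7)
The plan is to apply the explicit regular-terminal formulae of \cref{lem:KP:1reg} and track the behaviour of the extrapolation length $\Xi$ as the terminal elevation $\ol{Y}\deq\ol{v}_\mathbf{y}(\Lambda;1)$ descends to $1^+$. Once $\Xi\to+\infty$ is established, the companion formula for $\Upsilonsub_{\indexminus}(\Lambda;1)$ delivers the conclusion directly.

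First I would dispose of the non-hyperbolic environment $\pd{x}\ol{r}(\vr{1})=0$ at once: \cref{lem:KP:1reg} stipulates that any regular terminal orbit there asymptotes within the stable eigenspace, so $\Upsilonsub_{\indexminus}(\Lambda;1)$ is trivially zero and the corollary is vacuous. This reduces matters to the hyperbolic regime $\pd{x}\ol{r}(\vr{1})<0$, where $\olvr{U}$ lies on the face $\ol{x}=1$ and $\pd{x}\ol{r}(\olvr{U})<0$ stays strictly negative by continuity as $\olvr{U}\to\vr{1}$ (forced by $\ol{Y}\to 1$ together with the remark in \cref{lem:KP:1reg} that $\ol{Y}=1$ triggers $\Xi=+\infty$ and $\ol{Z}=1$).

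In the generic hyperbolic subcase $\Lambda\pd{x}\ol{r}(\olvr{U})>-1/4$, the first relation of \cref{lem:KP:1reg} with $\lvert\Upsilonsub_{\indexplus}(\Lambda;1)\rvert=1$ reads $\Xi\,\Omega_{\indexplus}(\Lambda;1)=\log(\ol{Y}-1)-\log\sqrt{1+4\Lambda\pd{x}\ol{r}(\olvr{U})}-\log(-\Upsilonsub_{\indexplus}(\Lambda;1))$; the right-hand side is bounded apart from the divergence $\log(\ol{Y}-1)\to-\infty$, while $\Omega_{\indexplus}(\Lambda;1)<0$ stays bounded away from zero, so $\Xi\to+\infty$. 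The companion relation $\Upsilonsub_{\indexminus}(\Lambda;1)=-\Upsilonsub_{\indexplus}(\Lambda;1)\exp(-\Xi\sqrt{1+4\Lambda\pd{x}\ol{r}(\olvr{U})})$ then immediately delivers $\Upsilonsub_{\indexminus}(\Lambda;1)\to 0$. In the critical subcase $\Lambda\pd{x}\ol{r}(\olvr{U})=-1/4$, the alternative formulae yield $\Xi\,\Omega_{\indexplus}=\log(\ol{Y}-1)+\log(\Xi-2)$ with $\Omega_{\indexplus}=-1/2$; the linear left-side dominates the sublinear $\log(\Xi-2)$, forcing $\Xi\to+\infty$, after which $\Upsilonsub_{\indexminus}=1/(\Xi-2)$ completes the case.

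It remains to identify the limit as a middle class plane wave: $\ol{Y}=1$ forces singular termination at $\vr{1}$, and with $\Upsilonsub_{\indexminus}(\Lambda;1)=0$ the classification columns of \cref{tab:KP:1Hyp,tab:KP:1nonHyp} return class M. The main technical obstacle is the critical subcase, where the exponential formula degenerates and one must extract polynomial decay $1/(\Xi-2)$ via the dedicated logarithmic identity rather than by direct exponential reading; elsewhere the argument is elementary continuity applied to the formulae already supplied by \cref{lem:KP:1reg}.
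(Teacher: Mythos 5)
Your argument is correct and is precisely the intended one: the paper supplies no separate proof for this corollary, presenting it as a direct reading of the explicit formulae in \cref{lem:KP:1reg} exactly as you do --- the divergence $\log(\ol{Y}-1)\to-\infty$ forces $\Xi\to+\infty$, whence $\Upsilonsub_{\indexminus}(\Lambda;1)\to0$ in both the generic and the critical $\Lambda\pd{x}\ol{r}(\olvr{U})=-1/4$ branches, and the vanishing of $\Upsilonsub_{\indexminus}$ identifies the limit as class M (or UM in the non-hyperbolic environment). The one scenario your case split leaves implicit is a limit taken through $\Lambda\pd{x}\ol{r}(\olvr{U})\le-1/4$ at a regular terminus $\olvr{U}\neq\vr{1}$, where $\Omega_{\indexminus},\Omega_{\indexplus}$ are complex conjugate or coalescing and the factor $\exp\lrprm{-\Xi\sqrt{1+4\Lambda\pd{x}\ol{r}(\olvr{U})}}$ need not decay even as $\Xi\to+\infty$; the paper is equally silent on this degenerate approach to LM termination, so your proof is no less complete than the source.
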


\begin{table}
	\begin{tabular}
		{|c|c|c|c|c|c|} \hline \rule{0pt}{12pt}\textbf{Class} & \M{-4\Lambda\pd{x}{\ol{r}(\vr{1})}} & \M{\ol{v}_\mathbf{y}(\Lambda;1)-1} & \M{1-\pd{x}\ol{v}_\mathbf{y}(\Lambda;1)} & \M{\Lambda\pd{\Lambda}\left(1-\pd{x}\ol{v}_\mathbf{y}(\Lambda;1)\right)\phantom{_{|_{|}}}} \\
		\hline \hline \rule{0pt}{10pt}U strictly & any & regular & \M{1-\Lambda\ol{r}/\left(\ol{v}_\mathbf{y}-1\right)} & \M{-\Lambda\pd{\Lambda}\left(\Lambda\ol{r}/(\ol{v}_\mathbf{y}-1)\right)} \\
		upper & \M{\in\olst{R^+}} & \M{\in\st{R^+}} & \M{\in\set{1-}} & \M{\in\set{0}} \\
		\hline \rule{0pt}{10pt}UM upper & non-hyperbolic & singular & \M{-\Omega_{ 
		\indexplus}(\Lambda;1)-\Lambda\ol{r}/\left(\ol{v}_\mathbf{y}-\ol{x}\right)} & \M{-\Lambda\pd{\Lambda}\left(\Lambda\ol{r}/(\ol{v}_\mathbf{y}-\ol{x})\right)} \\
		middle & \M{\in\set{0}} & \M{\in\set{0}} & \M{\in\set{1-}} & \M{\in\set{0-}} \\
		\hline \rule{0pt}{10pt}M strictly & hyperbolic & singular & \M{-\Omega_{ 
		\indexplus}(\Lambda;1)} & \M{-\Omega_{ 
		\indexplus}\left(1+\Omega_{ 
		\indexplus}\right)/\left(1+2\Omega_{ 
		\indexplus}\right)} \\
		middle & \M{\in\open{0}{1}} & \M{\in\set{0}} & \M{\in\open{1/2}{1}} & \M{\in\st{R^-}} \\
		\hline \rule{0pt}{10pt}LM lower & degenerate & singular & \M{\in\set{-\Omega_{ 
		\indexminus}(\Lambda;1),-\Omega_{ 
		\indexplus}(\Lambda;1)}} & \M{\in\set{-\Lambda\pd{\Lambda}\Omega_{ 
		\indexminus},-\Lambda\pd{\Lambda}\Omega_{ 
		\indexplus}}} \\
		middle & \M{\in\set{1}} & \M{\in\set{0}} & \M{=\set{1/2}} & \M{=\set{+\infty,-\infty}} \\
		\hline \rule{0pt}{10pt}L strictly & hyperbolic & singular & \M{-\Omega_{ 
		\indexminus}(\Lambda;1)} & \M{-\Omega_{ 
		\indexminus}\left(1+\Omega_{ 
		\indexminus}\right)/\left(1+2\Omega_{ 
		\indexminus}\right)} \\
		lower & \M{\in\open{0}{1}} & \M{\in\set{0}} & \M{\in\open{0}{1/2}} & \M{\in\st{R^+}} \\
		\hline \rule{0pt}{10pt}L strictly & non-hyperbolic & singular & \M{-\Omega_{ 
		\indexminus}(\Lambda;1)-\Lambda\pd{x}\ol{r}} & \M{-\Lambda\pd{x}\ol{r}} \\
		lower & \M{\in\set{0}} & \M{\in\set{0}} & \M{\in\set{0+}} & \M{\in\set{0+}} \\
		\hline 
	\end{tabular}
	\vskip\belowdisplayskip\vskip\belowdisplayskip\caption{Compact orbit classification, calculated at \M{(\ol{x}=1)}. The singleton \set{0+} indicates \set{0} entered from \st{R^+} as \M{\ol{x}\to 1}. Likewise \M{\set{0-}\deq\set{0}}, \M{\set{1-}\deq\set{1}} are singletons entered from \st{R^-}. The final column is invalid across class boundaries.} 
\label{tab:KP:class} \end{table}

\section{Interior Properties}\label{sec:IP} 
\begin{remark}
	The heart of this paper contrasts compact orbits as lentor \M{\Lambda} varies. Distinct orbits are contrasted orthogonal to the \M{x}-direction. This ordinate is privileged by monotonicity and fixed domain, which cannot be said for other candidates \M{y}, \M{z} or \M{\xi}. Crucially, contrast proves positive and uniformly continuous with respect to \M{x} in \cref{lem:IP:Bound}. 
\end{remark}
\begin{definition}\label{def:IP:IP} 
	\FirstLine{Interior properties \M{\vr{f}\in\cont{\infty}{\open{0}{1}_{\Lambda}}}} include the interior orbit \M{\st{V}(\Lambda)} and any travelling wave coordinate \M{\xi(\Lambda;x)} by \cref{lem:O:PS} and \cref{lem:O:NADS}. The former is expressed for contrast as 
	\begin{equation*}
		\vr{w}(\Lambda;x) \deq 
		\begin{pmatrix}
			w_\mathbf{y}(\Lambda;x) \\
			w_\mathbf{z}(\Lambda;x) 
		\end{pmatrix}
		\deq 
		\begin{pmatrix}
			(v_\mathbf{y}-x)^2 \\
			v_\mathbf{z}-x 
		\end{pmatrix}
	\end{equation*}
	so \M{\pd{\Lambda} (\pd{x} \ol{w}_\mathbf{y}(\Lambda;1))} exists even when \M{\pd{\Lambda} (\pd{x} \ol{v}_\mathbf{y}(\Lambda;1))} diverges. 
\end{definition}
\begin{definition}\label{def:IP:CO} 
	\FirstLine{The contrast operator \M{\op{d}\colon \cont{\infty}{\open{0}{1}_{\Lambda}} \to \cont{\infty}{\open{0}{1}_{\Lambda}}}} is 
	\begin{equation*}
		\op{d} \vr{f}(\Lambda;x) \deq \vr{f}(\Lambda + \eps;x)-\vr{f}(\Lambda;x) \quad \T{for parameter } \eps \in \st{R^+} 
	\end{equation*}
	Reflecting any concrete integration or experiment, the \M{\eps} limit is always taken prior, as in 
	\begin{equation*}
		\pd{\Lambda+}\olvr{f}(\Lambda;X) \deq \lim\limits_{x\to X} \lim\limits_{\eps \to 0} \frac{\op{d} \vr{f}(\Lambda;x)}{\eps} 
	\end{equation*}
	The contrast operator is right-sided, asymptoting to the right-sided derivative. Only if the contrast operator necessarily agrees with the corresponding left-sided contrast as \M{\epsilon\to0} shall we invoke \M{\pd{\Lambda}\olvr{f}\deq\pd{\Lambda+}\olvr{f}=\pd{\Lambda-}\olvr{f}}. 
\end{definition}
\begin{lemma}\label{lem:IP:CE} 
	\FirstLine{The contrast equation} 
	\begin{equation*}
		\begin{pmatrix}
			-1&\opim{L}{\mathbf{z}}{}{}\\\opim{L}{\mathbf{y}}{}{}&-1 
		\end{pmatrix}
		\Lambda\pd{\Lambda}\vr{w}(\Lambda;x) = 
		\begin{pmatrix}
			0\\r(\vr{u})/\pd{z}r(\vr{u}) 
		\end{pmatrix}
	\end{equation*}
	governs interior orbit contrast. The linear operators \mapdef{\opim{L}{\mathbf{y}}{}{},\opim{L}{\mathbf{z}}{}{}}{\cont{\infty}{\open{0}{1}_{\Lambda}}}{\cont{\infty}{\open{0}{1}_{\Lambda}}} are given by 
	\begin{gather*}
		\opim{L}{\mathbf{y}}{}{} \deq l_{\mathbf{y}}(\Lambda;x) \left(1 + (y-x)\pd{x} \right) \deq \frac{1}{2(y-x)\Lambda \pd{z}r(\vr{u})} \left(1 + (y-x)\pd{x} \right) \\
		\opim{L}{\mathbf{z}}{}{} \deq l_{\mathbf{z}}(\Lambda;x) \left(1 + D^{-1}(y-x)\pd{x} \right) \deq \frac{2(y-x)^2}{(z-x)} \left(1 + D^{-1}(y-x)\pd{x} \right) 
	\end{gather*}
	ignoring \opim{L}{\mathbf{z}}{}{} and \M{w_\mathbf{z}} in any equidiffusive environment \M{(\ol{r};D=1)}. 
\end{lemma}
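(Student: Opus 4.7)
The plan is to differentiate the non-autonomous distillate of \cref{lem:O:NADS} with respect to \M{\Lambda} at fixed \M{x} and repackage the resulting linear system in the compact notation of the operators \M{\opim{L}{\mathbf{y}}{}{}} and \M{\opim{L}{\mathbf{z}}{}{}}; I anticipate no conceptual obstacle, the argument being essentially bookkeeping. First I rewrite the distillate in terms of \M{\vr{w}}. Multiplying the \M{\mathbf{y}}-equation through by \M{(v_\mathbf{y}-x)} and using \M{(v_\mathbf{y}-x)\pd{x}v_\mathbf{y} = \tfrac{1}{2}\pd{x}w_\mathbf{y} + (v_\mathbf{y}-x)} yields \M{\tfrac{1}{2}\pd{x}w_\mathbf{y} + (v_\mathbf{y}-x) = \Lambda r(\vr{u})}, while the same trick recasts the \M{\mathbf{z}}-equation as \M{(v_\mathbf{y}-x)\pd{x}w_\mathbf{z} = (D-1)(v_\mathbf{y}-x) - Dw_\mathbf{z}}.

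Smoothness of \M{\vr{v}} jointly in \M{(x,\Lambda)} on the open interior --- standard parametric dependence for the distillate, whose right-hand side is smooth in \M{\Lambda} by construction and in \M{\vr{v}} by \cref{eq:PW:r smooth} --- licenses \M{\pd{\Lambda}}-differentiation at fixed \M{x}. The crucial simplification is \M{\pd{\Lambda}r(\vr{u}) = \pd{z}r(\vr{u})\,\pd{\Lambda}w_\mathbf{z}}, using \M{\pd{y}r \equiv 0} from \cref{eq:PW:U def} together with \M{\pd{\Lambda}x = 0}. The bridging identities \M{\pd{\Lambda}v_\mathbf{y} = \pd{\Lambda}w_\mathbf{y}/(2(v_\mathbf{y}-x))} and \M{\pd{\Lambda}v_\mathbf{z} = \pd{\Lambda}w_\mathbf{z}} complete the translation, positivity of \M{v_\mathbf{y}-x} on the interior (\cref{lem:O:PS}) underwriting the division. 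Differentiating the \M{\mathbf{y}}-equation then gives \M{\tfrac{1}{2}\pd{x}\pd{\Lambda}w_\mathbf{y} + \pd{\Lambda}w_\mathbf{y}/(2(v_\mathbf{y}-x)) = r(\vr{u}) + \Lambda\pd{z}r(\vr{u})\,\pd{\Lambda}w_\mathbf{z}}; dividing by \M{\pd{z}r(\vr{u})} and threading \M{\Lambda} through the scalar factor \M{1 + (v_\mathbf{y}-x)\pd{x}} reproduces precisely the second row \M{\opim{L}{\mathbf{y}}{}{}(\Lambda\pd{\Lambda}w_\mathbf{y}) - \Lambda\pd{\Lambda}w_\mathbf{z} = r/\pd{z}r}. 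For the first row I differentiate the \M{\mathbf{z}}-equation; the \M{(D-1)} contributions cancel identically, leaving \M{-Dw_\mathbf{z}\pd{\Lambda}w_\mathbf{y}/(2(v_\mathbf{y}-x)^2) + (v_\mathbf{y}-x)\pd{x}\pd{\Lambda}w_\mathbf{z} + D\pd{\Lambda}w_\mathbf{z} = 0}, whence multiplying by \M{-2(v_\mathbf{y}-x)^2/(Dw_\mathbf{z})} and reordering yields \M{\Lambda\pd{\Lambda}w_\mathbf{y} = \opim{L}{\mathbf{z}}{}{}(\Lambda\pd{\Lambda}w_\mathbf{z})}.

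The only cautions are that each division requires the interior restriction \M{\open{0}{1}_\Lambda}, where \M{v_\mathbf{y}-x>0} and, when \M{D\ne1}, \M{w_\mathbf{z}>0}; and that the equidiffusive caveat is honoured automatically, since for \M{D=1} we have \M{w_\mathbf{z}\equiv 0} identically, the first row is vacuous, and only the scalar equation \M{\opim{L}{\mathbf{y}}{}{}(\Lambda\pd{\Lambda}w_\mathbf{y}) = r/\pd{z}r} survives, its right-hand side being continuous on \M{\olst{U}} by \cref{eq:PW:r inc z}.
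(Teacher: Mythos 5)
Your algebra is correct and reconstructs both rows of the contrast equation exactly as the operators \M{\opim{L}{\mathbf{y}}{}{}}, \M{\opim{L}{\mathbf{z}}{}{}} encode them, and your route (differentiate the distillate in \M{\Lambda} at fixed \M{x}, translate through \M{\vr{w}}, use \M{\pd{y}r=0} and \M{\pd{\Lambda}x=0}) is in substance the paper's. The gap is the one sentence doing all the analytic work: that ``standard parametric dependence for the distillate \ldots licenses \M{\pd{\Lambda}}-differentiation at fixed \M{x}.'' Standard parameter-dependence theorems give joint smoothness of a solution launched from a \M{\Lambda}-independent initial condition at a \emph{regular} point of the vector field. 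The interior orbit \M{\st{V}(\Lambda)} is not so determined: it is singled out by its asymptotics at the singular kernel point \M{\vr{0}}, where the distillate's right-hand side is \M{0/0} (both \M{\Lambda r} and \M{v_\mathbf{y}-x} vanish) and where \cref{lem:O:CO} constructs the orbit only as a limit of Picard solutions launched from \M{\Lambda}-dependent points approaching \M{\vr{0}}. Differentiability of \M{\Lambda\mapsto\vr{v}(\Lambda;x)} at fixed interior \M{x} is therefore a statement about smooth parameter-dependence of an unstable (or, when \M{\pd{z}\ol{r}(\vr{0})=0}, centre) manifold --- the content of \cref{tab:KP:0} and the \M{\Lambda}-elevation bookkeeping --- not a consequence of Picard--Lindel\"of.

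The paper is explicit that this is the delicate point: its proof presents the lemma as a check on the limit exchange \M{\pd{\Lambda}(\pd{x}\vr{w})=\pd{x}\pd{\Lambda}\vr{w}}, obtains the equation by applying the finite contrast operator \M{\op{d}} and letting \M{\eps\to0}, and records that the passage is valid only on compact subintervals \M{\closed{X_o}{X}_{\Lambda}\subset\open{0}{1}_{\Lambda}} on which \M{\norm{\op{d}\olvr{w}}{\closed{X_o}{X}_{\Lambda}}=O(\eps)} --- a condition that is then bootstrapped through \cref{lem:IP:Bound} and the cascade machinery of \cref{sec:UD}. Your proposal assumes as an input (joint smoothness of \M{\vr{v}} in \M{(\Lambda,x)}) what that architecture is designed to deliver as an output. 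To repair it you would need either to invoke a parametrized unstable/centre-manifold theorem at the origin (treating the non-hyperbolic case \M{\pd{z}\ol{r}(\vr{0})=0} separately), or to retreat to the paper's conditional, finite-\M{\eps} formulation. The computational content --- including the cancellation of the \M{(D-1)} terms, the division by \M{w_\mathbf{z}\neq0} for \M{D\neq1}, and the equidiffusive degeneration --- is otherwise sound.
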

\begin{proof}
	A check on the uniform validity of limit exchange \M{\pd{\Lambda}(\pd{x}\vr{w})=\pd{x} \pd{\Lambda}\vr{w}}. Applying the contrast operator to Non-Autonomous \cref{lem:O:NADS} yields the contrast equation as \M{\eps \to 0}, on condition that 
	\begin{align*}
		\op{d}w_\mathbf{y}(\Lambda;x) &= o(w_\mathbf{y}(\Lambda;x)) \\
		\op{d}\lrp{\Lambda r(\vr{u})} &= \eps r(\vr{u}) + \Lambda \pd{z} r(\vr{u}) \op{d}w_\mathbf{z} + o\lrp{\op{d}\lrp{\Lambda r(\vr{u})}} 
	\end{align*}
	\Cref{eq:PW:r smooth,eq:PW:r inc z} and \cref{lem:O:PS} readily secure these conditions throughout any \hfill\M{\closed{X_o}{X}_{\Lambda}\subset\open{0}{1}_{\Lambda}}\hfill obeying 
	\newline \M{\norm{\op{d}\olvr{w}}{\closed{X_o}{X}_{\Lambda}}=O(\eps)}. The latter aspect of the contrast equation is demonstrated in \cref{lem:IP:Bound}. 
\end{proof}
\begin{corollary}\label{cor:IP:CEBC} 
	\FirstLine{The contrast equation is reliable on \M{\clopen{0}{1}_{\Lambda}}}, subject to \M{\pd{\Lambda}\olvr{w}(\Lambda;0) = \op{\ol{X}} \vr{0}}. 
\end{corollary}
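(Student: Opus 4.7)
The plan is to extend the contrast equation of \cref{lem:IP:CE} by continuity from $\open{0}{1}_{\Lambda}$ to the boundary $x=0$, pinning it with the initial datum $\pd{\Lambda+}\olvr{w}(\Lambda;0)=\op{\ol{X}}\vr{0}$ inherited from the singular origin.

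First, the original boundary constraint \cref{def:PW:BC0} fixes $\olvr{u}(\Lambda;-\infty)=\vr{0}$ uniformly in $\Lambda$, so $v_\mathbf{y}(\Lambda;0)=v_\mathbf{z}(\Lambda;0)=0$ and $\vr{w}(\Lambda;0)=\op{\ol{X}}\vr{0}$ identically in $\Lambda$. Thus $\op{d}\vr{w}(\Lambda;0)=\op{\ol{X}}\vr{0}$ exactly, before any limit is taken. What must still be checked is that the outer limit $\lim_{x\to 0^+}\pd{\Lambda+}\vr{w}(\Lambda;x)$ exists and equals $\op{\ol{X}}\vr{0}$, not merely that the datum is consistent when evaluated at $x=0$.

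Second, I would invoke the origin asymptotes catalogued in \cref{lem:KP:0} and \cref{tab:KP:0}. In the hyperbolic subcase the orbit follows an asymptote that is smooth in $\Lambda$ through $\Omega_{\indexminus}(\Lambda;0)$ and $\vr{1}_{\indexminus}(\Lambda;0)$, yielding $v_\mathbf{y}(\Lambda;x)-x=\Omega_{\indexminus}\,x+o(x)$ and $v_\mathbf{z}(\Lambda;x)-x=O(x)$ with smooth $\Lambda$-dependence. Consequently $w_\mathbf{y}=O(x^2)$, $w_\mathbf{z}=O(x)$, and $\pd{\Lambda+}\vr{w}(\Lambda;x)=O(x)$ as $x\to 0$, which delivers the claimed boundary value. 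The non-hyperbolic subcase produces the same $O(x)$ decay through the orbital integral representation of $\vec{x}(\Lambda;\xi)$, the nonlinear $\Lambda$-elevating supplement supplying smooth $\Lambda$-dependence.

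Third, the contrast equation itself extends continuously to $x=0$. Its right-hand side $r(\vr{u})/\pd{z}r(\vr{u})$ is continuous up to $\vr{u}=\vr{0}$ by \cref{eq:PW:r inc z} and vanishes there. The singular coefficient $l_\mathbf{y}\propto 1/(y-x)$ of $\opim{L}{\mathbf{y}}{}{}$ is absorbed because $\pd{\Lambda}w_\mathbf{y}$ and $\pd{x}\pd{\Lambda}w_\mathbf{y}$ both inherit the offsetting power of $x$; similarly for $l_\mathbf{z}$ and $\pd{\Lambda}w_\mathbf{z}$. The main obstacle is bookkeeping these cancellations across every branch of \cref{tab:KP:0} and through the equidiffusive subcase, where the second row is suppressed per \cref{lem:IP:CE}, ensuring in each branch that the powers of $x$ from the asymptote precisely cover the singular factors in the operator coefficients.
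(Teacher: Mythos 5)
Your proposal is correct and follows the route the paper intends (the corollary is left unproved, resting implicitly on the singular original orbit of \cref{lem:O:CO}, the origin asymptotics of \cref{lem:KP:0} and \cref{tab:KP:0}, and the condition \M{\norm{\op{d}\olvr{w}}{}=O(\eps)} from the proof of \cref{lem:IP:CE}): the boundary constraint pins \M{\olvr{w}(\Lambda;0)=\op{\ol{X}}\vr{0}} for every \M{\Lambda}, and the asymptotes give \M{w_\mathbf{y}=O(x^2)}, \M{w_\mathbf{z}=O(x)} with explicit \M{\Lambda}-dependence through \M{\Omega_{
\indexminus}(\Lambda;0)} (hyperbolic) or \M{\Lambda\ol{r}} (non-hyperbolic), so the contrast vanishes in the limit \M{x\to0} and the singular coefficients \M{l_\mathbf{y}}, \M{l_\mathbf{z}} are absorbed. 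Your explicit observation that the outer limit \M{\lim_{x\to0^+}\pd{\Lambda+}\vr{w}} must be checked, rather than merely the consistency of the datum at \M{x=0}, correctly reflects the order of limits in \cref{def:IP:CO}.
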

\begin{definition}\label[notation]{def:IP:Index} 
	\FirstLine{Index notation} deploys 
	\begin{align*}
		\QT{The factor ring of integers} & \st{Z} / N\st{Z} \deq \set{0,1,\dots,N-1}_> \\
		\QT{The non-zero index} & n \in \st{Z} \setminus \set{0} \\
		\QT{The index parity} &\modulus{\mathbf{n}} \deq n \bmod 2 \in \st{Z} / 2\st{Z}\deq\set{0,1}_> \\
		\QT{The index component} &\mathbf{n} \deq n \bmod 2 \in \set{\mathbf{y},\mathbf{z}}_{\succ} \cong \set{0,1}_> 
	\end{align*}
	so even index values refer to \M{\mathbf{y}}-components, odd ones to \M{\mathbf{z}}-components. It is worth defining the constant 
	\begin{equation*}
		L_{\mathbf{n}} \deq \bmodulus{1-\modulus{\mathbf{n}}}+\modulus{\mathbf{n}}\sgn(D-1) = \sgn \lrprm{l_{\mathbf{n}}(\Lambda;x)} \T{ for all }x\in\open{0}{1} 
	\end{equation*}
\end{definition}
\begin{lemma}\label{lem:IP:CO Inv} 
	\FirstLine{The mean value inverses \M{\opim{L}{\mathbf{y}}{-1}{},\opim{L}{\mathbf{z}}{-1}{}}} apply on \M{\closed{X_o}{X}_{\Lambda} \subset \clopen{0}{1}_{\Lambda}} in the sense that 
	\begin{equation*}
		\opim{L}{\mathbf{n}}{-1}{} \opim{L}{\mathbf{n}}{}{} \ol{f}(\Lambda;x) = \frac{\ol{g}_\mathbf{n}(X)\ol{f}(X)-\ol{g}_\mathbf{n}(X_o)\ol{f}(X_o)}{\ol{g}_\mathbf{n}(X)-\ol{g}_\mathbf{n}(X_o)} 
	\end{equation*}
	where the weight is 
	\begin{equation*}
		\ol{g}_\mathbf{n}(\Lambda;\ol{x}) \deq \exp(D^{\modulus{\mathbf{n}}}\xi) 
	\end{equation*}
	and the inverse 
	\begin{equation*}
		\opim{L}{\mathbf{n}}{-1}{} \ol{h}(\Lambda;\ol{x}) \deq \MeanValue[g_\mathbf{n}(\Lambda;x)]{\frac{h(\Lambda;x)}{l_\mathbf{n}(\Lambda;x)}}{X_o}{X} 
	\end{equation*}
	is a mean value provided 
	\begin{equation*}
		\frac{\ol{h}(\Lambda;\ol{x})}{\ol{l}_{\mathbf{n}}(\Lambda;\ol{x})} \in \cont{0}{\closed{X_o}{X}_\Lambda} \\
	\end{equation*}
\end{lemma}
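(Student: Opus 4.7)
The plan is to recognise $\opim{L}{\mathbf{n}}{}{}$ as a first-order linear ODE operator in the travelling wave coordinate $\xi$ and invert it by the classical integrating-factor method. Along any orbital, Autonomous \cref{def:PW:ADS} gives $\pd{\xi}x = y-x$, so the chain rule yields $(y-x)\pd{x} = \pd{\xi}$ when acting on $\cont{\infty}{\open{0}{1}_{\Lambda}}$. Consequently
\begin{equation*}
\opim{L}{\mathbf{n}}{}{}\ol{f} = l_{\mathbf{n}}(1 + D^{-\modulus{\mathbf{n}}}\pd{\xi})\ol{f},
\end{equation*}
and the weight $\ol{g}_{\mathbf{n}} = \exp(D^{\modulus{\mathbf{n}}}\xi)$, obeying $\pd{\xi}\ol{g}_{\mathbf{n}} = D^{\modulus{\mathbf{n}}}\ol{g}_{\mathbf{n}}$, is the requisite integrating factor. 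The Leibniz rule then supplies
\begin{equation*}
\pd{\xi}(\ol{g}_{\mathbf{n}}\ol{f}) = D^{\modulus{\mathbf{n}}}\ol{g}_{\mathbf{n}}(1 + D^{-\modulus{\mathbf{n}}}\pd{\xi})\ol{f} = D^{\modulus{\mathbf{n}}}\ol{g}_{\mathbf{n}}\frac{\opim{L}{\mathbf{n}}{}{}\ol{f}}{l_{\mathbf{n}}}.
\end{equation*}

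Integrating from $\Xi_o$ to $\Xi$, the travelling wave coordinates associated with $X_o$ and $X$ via \cref{lem:O:NADS}, produces
\begin{equation*}
\ol{g}_{\mathbf{n}}(X)\ol{f}(X) - \ol{g}_{\mathbf{n}}(X_o)\ol{f}(X_o) = \int_{\Xi_o}^{\Xi} \frac{\opim{L}{\mathbf{n}}{}{}\ol{f}}{l_{\mathbf{n}}}\,\pd{\xi}\ol{g}_{\mathbf{n}}\,\dd{\xi}.
\end{equation*}
Dividing by $\ol{g}_{\mathbf{n}}(X) - \ol{g}_{\mathbf{n}}(X_o)$ and invoking \cref{def:O:MV} recognises the right-hand side as the weighted orbital mean value of $(\opim{L}{\mathbf{n}}{}{}\ol{f})/l_{\mathbf{n}}$, establishing the first claimed identity. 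Reading the same computation as a definition of $\opim{L}{\mathbf{n}}{-1}{}$ with variable upper endpoint $X$ and $\ol{h} = \opim{L}{\mathbf{n}}{}{}\ol{f}$ then furnishes the mean-value expression for the inverse.

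The remaining bookkeeping concerns admissibility of weight and integrand for \cref{def:O:MV}. The weight $\ol{g}_{\mathbf{n}}$ is strictly increasing in $\xi$, and since \cref{lem:O:PS} gives $y-x > 0$ on $\st{S}(\Lambda)$, we have $\pd{x}\ol{g}_{\mathbf{n}} = D^{\modulus{\mathbf{n}}}\ol{g}_{\mathbf{n}}/(y-x) > 0$ throughout $\open{X_o}{X}_{\Lambda}$, qualifying $\ol{g}_{\mathbf{n}}$ as a legitimate weight. Restricting to $\closed{X_o}{X}_{\Lambda}\subset\clopen{0}{1}_{\Lambda}$ keeps the orbital clear of the terminal singularity where $y-x$ would degenerate, while the hypothesis $\ol{h}/\ol{l}_{\mathbf{n}}\in\cont{0}{\closed{X_o}{X}_{\Lambda}}$ is exactly the regularity the mean-value notation demands of its integrand. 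The only remaining subtlety — the main bookkeeping obstacle — is the sign of $l_{\mathbf{z}}$, encoded via $L_{\mathbf{n}}=\sgn(D-1)$ in \cref{def:IP:Index}; but since $\ol{g}_{\mathbf{n}}$ is monotone regardless of this sign, the inverse formula applies uniformly across both components and all $D\in\st{R^+}$.
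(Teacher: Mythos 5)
Your proposal is correct and follows essentially the same route as the paper: the identity $\opim{L}{\mathbf{n}}{}{}\ol{f}=\frac{\ol{l}_{\mathbf{n}}}{\pd{\xi}\ol{g}_{\mathbf{n}}}\pd{\xi}(\ol{g}_{\mathbf{n}}\ol{f})$ you derive by the integrating-factor computation is precisely the substitution the paper's proof invokes, after which both arguments reduce to an application of \cref{def:O:MV}. Your version merely spells out the intermediate steps (the chain rule $(y-x)\pd{x}=\pd{\xi}$ and the admissibility of the weight) that the paper leaves implicit.
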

\begin{proof}
	Contrast \cref{lem:IP:CE} expands the substitution 
	\begin{equation*}
		\ol{h}(\Lambda;\ol{x}) = \opim{L}{\mathbf{n}}{}{} \ol{f}(\Lambda;\ol{x}) = \frac{\ol{l}_{\mathbf{n}}(\ol{x})}{\pd{\xi}\ol{g}_\mathbf{n}(\ol{x})} \pd{\xi} \lrp{\ol{g}_\mathbf{n}(\ol{x}) \ol{f}(\ol{x})} 
	\end{equation*}
	\Cref{def:O:MV} readily applies to \M{\opim{L}{\mathbf{n}}{-1}{} \ol{h}(\Lambda;\ol{x})} to show the Lemma. 
\end{proof}
\begin{corollary}\label{cor:IP:IO} 
	\FirstLine{Interior orbit contrast at \M{(\Lambda;\ol{x})\in\clopen{0}{1}_{\Lambda}}} obeys 
	\begin{equation*}
		\Lambda\pd{\Lambda} w_\mathbf{n}(\Lambda;\ol{x}) = \MeanValue[g_\mathbf{n}(x)]{\frac{\Lambda\pd{\Lambda} w_{\prec\mathbf{n}}(x)}{l_{\mathbf{n}}(x)} + \frac{(1-\modulus{\mathbf{n}})}{l_{\mathbf{n}}(x)} \frac{r(\vr{v}(x))}{\pd{z}r(\vr{v}(x))}}{0}{\ol{x}} 
	\end{equation*}
	where the predecessor component is by definition \M{\mathop{\prec}\mathbf{n}\neq\mathbf{n}}. 
\end{corollary}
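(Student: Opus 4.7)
The plan is to combine the two rows of the contrast system from Lemma~\ref{lem:IP:CE} into a single per-component identity and invert using Lemma~\ref{lem:IP:CO Inv}, anchoring the lower limit at the origin via Corollary~\ref{cor:IP:CEBC}.

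First I would rewrite each row of the contrast equation in the form
\begin{equation*}
\opim{L}{\mathbf{n}}{}{}\,\Lambda\pd{\Lambda} w_{\mathbf{n}}(\Lambda;x) = \Lambda\pd{\Lambda} w_{\prec\mathbf{n}}(\Lambda;x) + (1-\modulus{\mathbf{n}})\,\frac{r(\vr{v}(x))}{\pd{z} r(\vr{v}(x))},
\end{equation*}
the parity factor $(1-\modulus{\mathbf{n}})$ being $1$ for $\mathbf{n}=\mathbf{y}$ (the row actually carrying the reaction source in Lemma~\ref{lem:IP:CE}) and $0$ for $\mathbf{n}=\mathbf{z}$, while $\mathop{\prec}\mathbf{n}$ picks out the other component in either case. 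This is a pure rebracketing of Lemma~\ref{lem:IP:CE}.

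Next I would apply $\opim{L}{\mathbf{n}}{-1}{}$ from Lemma~\ref{lem:IP:CO Inv}, which converts the operator equation into the weighted mean value with weight $\ol{g}_\mathbf{n}(\Lambda;\ol{x})=\exp(D^{\modulus{\mathbf{n}}}\xi)$. The continuity hypothesis $\ol{h}/\ol{l}_{\mathbf{n}}\in\cont{0}{\closed{X_o}{X}_\Lambda}$ is immediate on any compact subinterval of $\open{0}{1}_\Lambda$: Lemma~\ref{lem:O:PS} forces $y>x$ and $z>x$ strictly in the interior, so $\ol{l}_{\mathbf{n}}$ is bounded away from zero; Equation~\ref{eq:PW:r inc z} makes $r/\pd{z} r$ continuous on $\olst{U}$; and $\vr{w}$ is smooth on the interior by Definition~\ref{def:IP:IP}. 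Inversion therefore produces the stated mean value between arbitrary $X_o$ and $X$ in $\open{0}{1}_\Lambda$, modulo a boundary term proportional to $\ol{g}_\mathbf{n}(X_o)\,\Lambda\pd{\Lambda} w_\mathbf{n}(X_o)$.

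Finally I would push $X_o\to 0$. Since $\xi\to-\infty$ as $x\to 0$ along the compact orbit, $\ol{g}_\mathbf{n}(\Lambda;0)=0$; combined with Corollary~\ref{cor:IP:CEBC}'s boundary datum $\pd{\Lambda}\olvr{w}(\Lambda;0)=\vr{0}$, the boundary contribution vanishes and the lower endpoint drops to $0$, yielding the claimed formula on all of $\clopen{0}{1}_\Lambda$ (the case $\ol{x}=0$ being trivial since both sides vanish by Corollary~\ref{cor:IP:CEBC}). The main obstacle is precisely this limit: I must show that the improper weighted integral converges near the origin and that the vanishing of the boundary value of $\Lambda\pd{\Lambda} w_\mathbf{n}$ dominates that of $\ol{g}_\mathbf{n}$. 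To pin this down I would import the asymptotic form of $\olvr{u}$ near $\xi=-\infty$ from Lemma~\ref{lem:KP:0} to control $\ol{l}_\mathbf{n}$ and $r/\pd{z} r$ quantitatively, and then check integrability of the product against $\dd\ol{g}_\mathbf{n}$; the uniform smallness $\norm{\op{d}\olvr{w}}{\closed{X_o}{X}_\Lambda}=O(\eps)$ assumed in the proof of Lemma~\ref{lem:IP:CE} is what ultimately makes this passage legitimate.
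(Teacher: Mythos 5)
Your proposal is correct and follows the route the paper intends for this corollary: rebracket the two rows of Contrast \cref{lem:IP:CE} into the single per-component identity \M{\opim{L}{\mathbf{n}}{}{}\Lambda\pd{\Lambda}w_{\mathbf{n}} = \Lambda\pd{\Lambda}w_{\prec\mathbf{n}} + (1-\modulus{\mathbf{n}})\,r/\pd{z}r}, apply the mean value inverse of \cref{lem:IP:CO Inv}, and kill the lower boundary term via \M{\ol{g}_\mathbf{n}(\Lambda;0)=0} (since \M{\xi\to-\infty}) together with \cref{cor:IP:CEBC}. Your parity bookkeeping and the continuity/integrability caveats near the origin match what the paper relies on, so nothing further is needed.
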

\begin{lemma}\label{lem:IP:Bound} 
	\FirstLine{Interior orbit contrast is uniformly bound} by 
	\begin{align*}
		\frac{2\Lambda \pd{\Lambda} (\Lambda r)(\image{\vr{v}}{\open{0}{1}_{\Lambda}})}{2+\sgn(D-1)+\sgn(D-1)^2} &\subseteq \open{0}{\norm{\Lambda\ol{r}}{\olst[\Lambda]{S}}} \\
		\frac{\Lambda\image{\pd{\Lambda} w_\mathbf{y}}{\open{0}{1}_\Lambda}}{2+\sgn(D-1)+\sgn(D-1)^2} &\subseteq \open{0}{\norm{\Lambda\ol{r}}{\olst[\Lambda]{S}}^2} \\
	\end{align*}
\end{lemma}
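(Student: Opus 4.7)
My plan is to exploit the mean value representations of \cref{cor:IP:IO}, anchored at the vanishing initial condition \M{\Lambda\pd{\Lambda}\olvr{w}(\Lambda;0)=\vr{0}} from \cref{cor:IP:CEBC}, and split the analysis by \M{\sgn(D-1)}.

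For positivity, observe that \M{r>0} and \M{\pd{z}r>0} throughout \M{\st{S}(\Lambda)} by \crefrange{eq:PW:U def}{eq:PW:r inc z} and \cref{lem:O:PS}, so the \M{\mathbf{y}}-integrand of \cref{cor:IP:IO} carries the strictly positive source \M{2(y-x)\Lambda r}, which drives \M{\Lambda\pd{\Lambda} w_\mathbf{y}>0} immediately past the origin. The companion formula \M{\Lambda\pd{\Lambda} w_\mathbf{z} = \MeanValue[g_\mathbf{z}]{\Lambda\pd{\Lambda} w_\mathbf{y}/l_\mathbf{z}}{0}{\bar{x}}} then assigns \M{\Lambda\pd{\Lambda} w_\mathbf{z}} the sign \M{L_\mathbf{z}=\sgn(D-1)}. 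A bootstrap using the sign-preserving structure of the mean value operators propagates both signs throughout \M{\open{0}{1}_\Lambda}.

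To reduce the second bound to the first, I would invoke \M{\pd{y}r=0} at fixed \M{x} to obtain \M{\Lambda\pd{\Lambda}(\Lambda r(\vr{v})) = \Lambda r + \Lambda\pd{z}r \cdot \Lambda\pd{\Lambda} w_\mathbf{z}}; substitution into the \M{\mathbf{y}}-integrand collapses \cref{cor:IP:IO} to
\begin{equation*}
	\Lambda\pd{\Lambda} w_\mathbf{y}(\Lambda;\bar{x}) = \MeanValue[g_\mathbf{y}]{2(y-x)\Lambda\pd{\Lambda}(\Lambda r)}{0}{\bar{x}}.
\end{equation*}
Since \M{v_\mathbf{y}-x \leq \norm{\Lambda\ol{r}}{\olst[\Lambda]{S}}} by \cref{lem:O:PS}, any pointwise bound \M{\Lambda\pd{\Lambda}(\Lambda r)\leq K} yields \M{\Lambda\pd{\Lambda} w_\mathbf{y}\leq 2\norm{\Lambda\ol r}{\olst[\Lambda]{S}}K}, so it suffices to prove \M{\Lambda\pd{\Lambda}(\Lambda r) \leq \frac{1}{2}(2+\sgn(D-1)+\sgn(D-1)^2)\norm{\Lambda\ol r}{\olst[\Lambda]{S}}}. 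For \M{D=1} the \M{\mathbf{z}}-component is ignored so \M{\Lambda\pd{\Lambda}(\Lambda r)=\Lambda r \leq \norm{\Lambda\ol r}{\olst[\Lambda]{S}}}; for \M{D<1}, positivity gives \M{\Lambda\pd{z}r\cdot\Lambda\pd{\Lambda} w_\mathbf{z}\leq 0} and again \M{\Lambda\pd{\Lambda}(\Lambda r)\leq\Lambda r}. Both comfortably fit the target coefficient \M{1}.

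The main obstacle is \M{D>1}, where \M{\Lambda\pd{z}r\cdot\Lambda\pd{\Lambda} w_\mathbf{z}>0} must be bounded by \M{\norm{\Lambda\ol r}{\olst[\Lambda]{S}}} without invoking any derivative norm of \M{r}. My plan here is to iterate the two formulas of \cref{cor:IP:IO}: substituting the \M{\mathbf{z}}-formula into the \M{\mathbf{y}}-formula writes \M{\Lambda\pd{\Lambda} w_\mathbf{y}} as a double mean value of \M{\Lambda\pd{\Lambda}(\Lambda r)}. The phase-space inequalities \M{0\leq v_\mathbf{z}-x\leq v_\mathbf{y}-x\leq \norm{\Lambda\ol r}{\olst[\Lambda]{S}}} (from \M{v_\mathbf{z}\leq v_\mathbf{y}} in \cref{lem:O:PS}) control the weight \M{(z-x)/(y-x)^2} in \M{l_\mathbf{z}^{-1}}, while the kernel identity \M{\Lambda r(x,v_\mathbf{z})=\int_0^{v_\mathbf{z}}\Lambda\pd{z}r(x,z')\,dz'} from \cref{eq:PW:ker r} trades \M{\Lambda\pd{z}r} for \M{\Lambda r}. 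A self-consistent fixed-point estimate should close the iteration at \M{\Lambda\pd{\Lambda}(\Lambda r)\leq 2\norm{\Lambda\ol r}{\olst[\Lambda]{S}}}, producing the stated factor \M{4} for \M{D>1}. Certifying that the iteration converges with the correct constant is the crux of the argument.
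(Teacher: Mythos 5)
Your mechanical reductions coincide with the paper's: the identity \M{\Lambda\pd{\Lambda}(\Lambda r(\vr{v})) = \Lambda r + \Lambda\pd{z}r\cdot\Lambda\pd{\Lambda}w_\mathbf{z}} from \M{\pd{y}\ol{r}=0}, the collapse of \cref{cor:IP:IO} to a mean value of \M{2(y-x)\Lambda\pd{\Lambda}(\Lambda r)}, and the bound \M{y-x\leq\norm{\Lambda\ol{r}}{\olst[\Lambda]{S}}} from \cref{lem:O:PS}. But the entire substance of the Lemma when \M{D\neq1} is the two-sided pointwise claim
\begin{equation*}
	-\frac{r(\vr{v})}{\pd{z}r(\vr{v})} \;<\; \Lambda\pd{\Lambda}w_\mathbf{z}(\Lambda;x) \;<\; \frac{r(\vr{v})}{\pd{z}r(\vr{v})}
\end{equation*}
and this is exactly what you have not proved. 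In the paper it is \cref{cor:UD:result}, the culmination of the whole of \cref{sec:UD}: the argument is not a contraction or self-consistent estimate but an infinite descent on ``breach cascades'' --- any violation of the bound at some \M{x} forces a prerequisite violation of an iterated bound \M{\beta_{\mathord{\prec}n}} strictly closer to the origin (\cref{lem:UD:breach}), and \cref{lem:UD:0} rules out a first violation near \M{\vr{0}} by asymptotic analysis of indicators of arbitrarily high order against the eigenstructure of \cref{tab:KP:0}. That machinery consumes derivatives of \M{\ol{r}} of every order; the concluding Remark of the paper stresses that smoothness \M{\ol{r}\in\cont{\infty}{\olst{U}}} is \emph{essential} outside \M{D=1}. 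Your proposed one-step ``self-consistent fixed-point estimate'' using only \M{r}, \M{\pd{z}r} and the phase-space geometry is therefore very unlikely to close, and you yourself flag its convergence as the unresolved crux.

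There is a second, related gap in your \M{D<1} case. You dispatch it by noting \M{\Lambda\pd{z}r\cdot\Lambda\pd{\Lambda}w_\mathbf{z}\leq0}, which gives only the \emph{upper} bound \M{\Lambda\pd{\Lambda}(\Lambda r)\leq\Lambda r}. The first display of the Lemma also asserts \M{\Lambda\pd{\Lambda}(\Lambda r)>0}, which for \M{D<1} requires the negative term not to overwhelm \M{\Lambda r}, i.e.\ the lower half of the displayed claim --- again the hard part. Your ``sign bootstrap'' assumes the \M{\mathbf{y}}-source stays positive because it starts positive, but for \M{D<1} the source is \M{2(y-x)\Lambda\pd{\Lambda}(\Lambda r)} with a negative contribution of a priori uncontrolled size, so positivity does not propagate for free; the cascade argument exists precisely to control it. As written, your proof establishes the Lemma only in the equidiffusive case \M{D=1} and the upper bound for \M{D<1}.
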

\begin{proof}
	\Cref{lem:O:PS} following \cref{cor:IP:IO} subject to the claim 
	\begin{equation*}
		-\frac{r(\vr{v}(x))}{\pd{z} r(\vr{v}(x))} < \Lambda\pd{\Lambda} w_\mathbf{z}(\Lambda;x) < \frac{r(\vr{v}(x))}{\pd{z}r(\vr{v}(x))} 
	\end{equation*}
	Vacuously true in equidiffusive environments, the claim when \M{D\neq 1} takes a Section's discursion culminating in \cref{cor:UD:result}. 
\end{proof}

\section{Breach Cascades Under Unequal Diffusivity} \label{sec:UD} 
\begin{remark}
	The last Lemma is but the most recent generation in the dynasty of Contrast \cref{lem:IP:CE}. Its ancestry is traced to its origin in this Section, the Contrast Lemma imposing a prerequisite condition for any given bound to be breached. Interpreting this condition itself as a bound, whose breach is prerequisite to the first, commences an impossibly endless induction.
	
	The sequence of breaches dictated by orbit contrast is best considered in reverse, as a cascade of prerequisites. Loosely speaking, breach number one can only happen if the umpteenth breach occurred between the origin and here. Ultimately, the latest breach is impossible unless some prerequisite bound originated in breach. This original breach must be discernible on \M{\open{0}{1}_{\Lambda}}, without recourse to the origin \vr{0}.
	
	This Section formally implements this argument against infinite ascent of a falling sequence or cascade. The essence of the cascade is that its lowest absolute index value is logically final, and vice versa. Lamentably arcane bookkeping is imposed by the vicissitudes of \cref{lem:IP:CE}, obliging four different cascades to cover upper and lower bounds for \M{D<1} and \M{D>1}. Exemplary bound cascades are depicted in \crefrange{fig:UD:Dlt1}{fig:UD:Dgt1}. 
\end{remark}

\begin{figure}
	[htbp] \centering 
	\begin{minipage}
		{0.496 
		\textwidth} \centering 
		\includegraphics[scale=0.5]{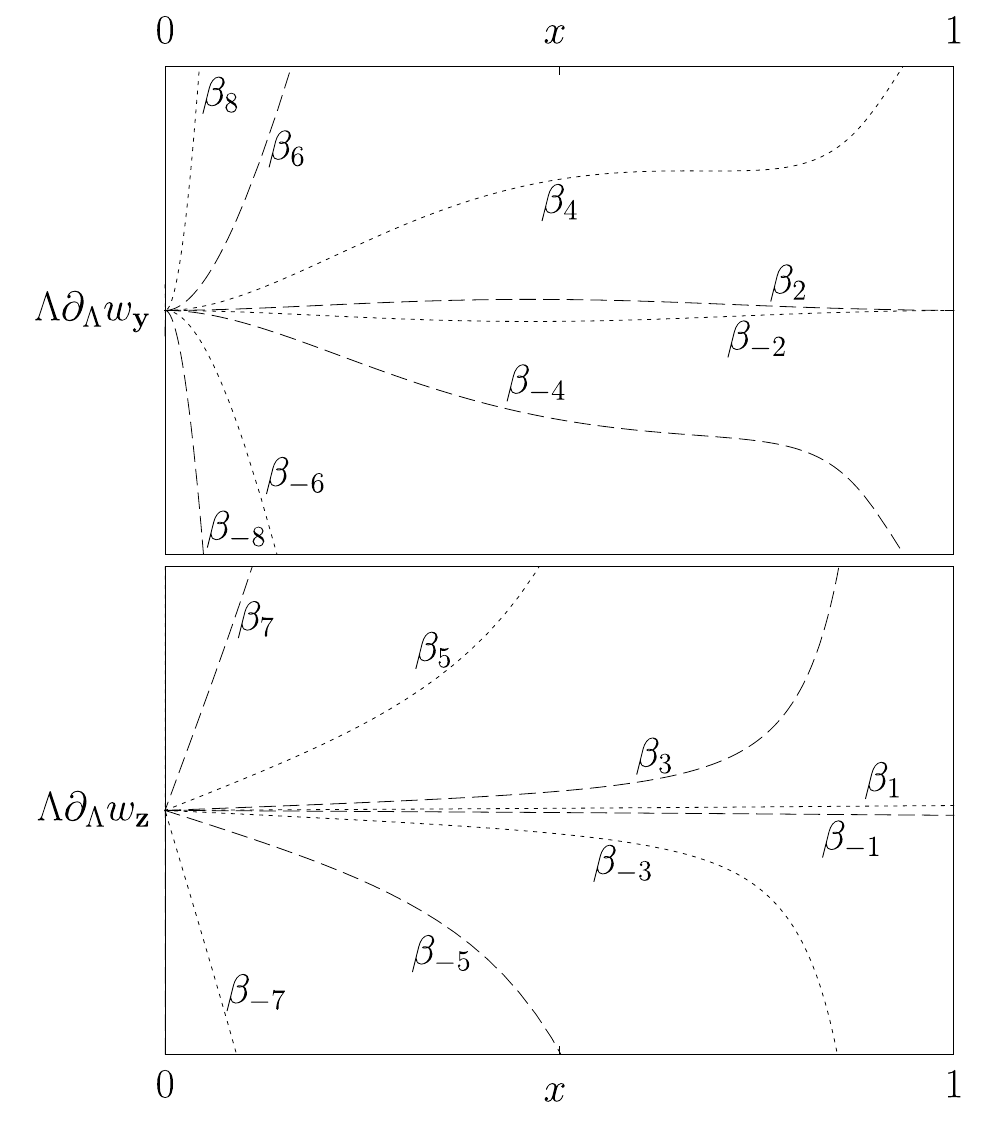} \captionsetup{margin={4pt,32pt}} \caption{Bound cascades for \M{D=1/2}, \M{\Lambda r(\vr{u})=z(1-x)^2/2}. Dashed cascade \M{\CasSeq[-]{\beta}} and dotted cascade \CasSeq[+]{\beta} are labelled to ascend from right to left, alternating between components.} 
	\label{fig:UD:Dlt1} \end{minipage}
	\begin{minipage}
		{0.496 
		\textwidth} \centering 
		\includegraphics[scale=0.5]{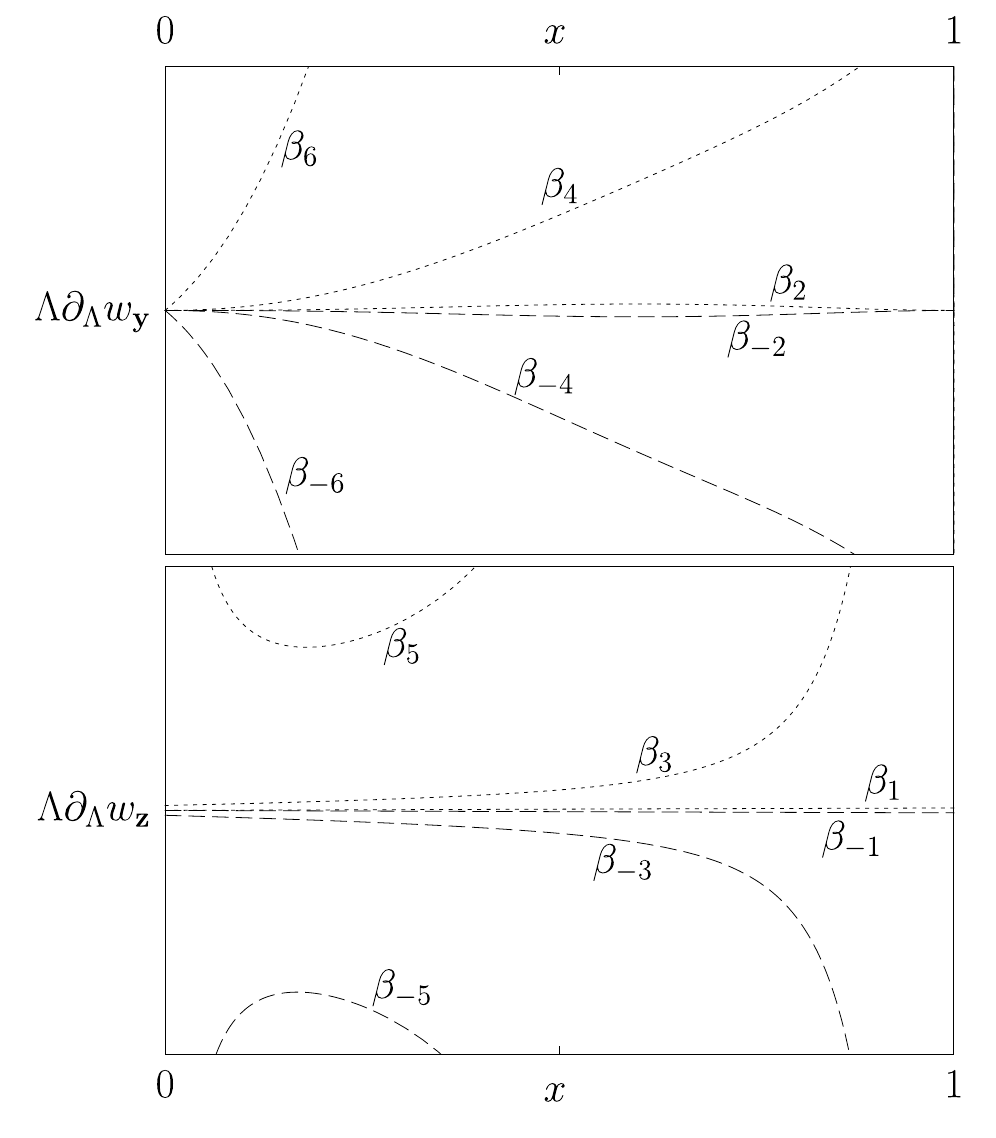} \captionsetup{margin={4pt,32pt}} \caption{Bound cascades for \M{D=2}, \M{\Lambda r(\vr{u})=z^2(1-x)^2/2}. Dashed cascade \M{\CasSeq[-]{\beta}} and dotted cascade \CasSeq[+]{\beta} are labelled to ascend from right to left, hopping between components.} 
	\label{fig:UD:Dgt1} \end{minipage}
\end{figure}

\begin{definition}\label{def:UD:cas} 
	\FirstLine{The cascade sequences \M{\CasSeq[-]{},\CasSeq[+]{}}} are introduced as 
	\begin{equation*}
		\CasSeq{} \deq \setbuilder{\prec^{\modulus{n}}(\pm 1)}{\modulus{n} \in \st{Z} / \CasLen\st{Z}} \cong \set{\CasLen,\ldots,1}_< 
	\end{equation*}
	using Index \cref{def:IP:Index} together with the prerequisite operation 
	\begin{equation*}
		\mathop{\prec}\colon n \mapsto (n+\sgn(n))L_{\mathbf{n}} \in \set{-(n+1),(n+1)} 
	\end{equation*}
	and the cascade lengths \M{\CasLen \in \set{\CasLen[-],\CasLen[+]} \subset \st{Z^+}}. 
\end{definition}
\begin{proposition}\label{pro:UD:cas} 
	\FirstLine{The cascade sequences compute to} 
	\begin{align*}
		D<1\colon\quad &\CasSeq[-]{} = \set{\pm\CasLen[-],\dots,-5,-4,+3,+2,-1}_{\pre} \quad &&\CasSeq[+]{} = \set{\pm\CasLen[+],\dots,+5,+4,-3,-2,+1}_{\pre} \\
		D>1\colon\quad &\CasSeq[-]{} = \set{-\CasLen[-],\dots,-5,-4,-3,-2,-1}_{\pre} \quad &&\CasSeq[+]{} = \set{+\CasLen[+],\dots,+5,+4,+3,+2,+1}_{\pre} 
	\end{align*}
	as compelled by the occurrence of \M{\sgn(z-x)=\sgn(D-1)\deqr L_{\mathbf{1}}} in Contrast \cref{lem:IP:CE}. 
\end{proposition}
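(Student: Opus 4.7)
The plan is a direct induction on the absolute cascade index, unpacking how the prerequisite map $\prec$ acts on sign and parity.

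First I would evaluate $L_{\mathbf{n}}$ from \cref{def:IP:Index}: since $\modulus{\mathbf{n}}=n\bmod 2$ we get $L_{\mathbf{n}}=+1$ at even $n$ and $L_{\mathbf{n}}=\sgn(D-1)$ at odd $n$. Because $|n+\sgn(n)|=|n|+1$ for every nonzero $n$, we have $|\prec(n)|=|n|+1$, so the absolute index advances by one and the parity toggles at every step. That alone secures the claimed monotone ordering $\set{\pm\CasLen,\ldots,\pm 1}_{\pre}$ and the $\mathbf{y}/\mathbf{z}$ alternation along the cascade, leaving only the signs to verify.

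For $D>1$, $L_{\mathbf{n}}\equiv +1$ so $\prec(n)=n+\sgn(n)$ preserves sign; iterating from $\pm 1$ immediately yields the monotone sequences of the second line. For $D<1$, $L_{\mathbf{n}}$ flips sign exactly when $|n|$ is odd. A direct computation starting from $\pm 1$ produces $\pm 1 \mapsto \mp 2 \mapsto \mp 3 \mapsto \pm 4 \mapsto \pm 5 \mapsto \mp 6 \mapsto \cdots$, exhibiting a period-four sign pattern, and an elementary induction on blocks of four consecutive steps (one odd-index flip framed by two sign-preserving even-index steps, then another odd-index flip restoring the sign) propagates the pattern to arbitrary $n$. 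Reading the iterates in reverse $\pre$-order reproduces the first line verbatim.

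The closing clause about $\sgn(z-x)=\sgn(D-1)\eqr L_{\mathbf{1}}$ is a consistency check rather than a new claim: \cref{lem:O:PS} pins the phase-space $\mathbf{z}$-coordinate to $\closed{xD}{x}$ when $D<1$ and $\closed{x}{xD}$ when $D>1$, so $\sgn(z-x)=\sgn(D-1)$ throughout $\st{S}(\Lambda)$; combined with $y>x$ and $\pd{z}r>0$ this yields $\sgn(l_{\mathbf{y}})=+1=L_{\mathbf{0}}$ and $\sgn(l_{\mathbf{z}})=\sgn(D-1)=L_{\mathbf{1}}$, exactly the values hardwired into \cref{def:IP:Index}. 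No genuine obstacle arises -- the statement is essentially a notational audit -- but the sign bookkeeping has enough moving parts that I would handle the $+1$ and $-1$ branches separately rather than try to collapse them into a single closed-form formula.
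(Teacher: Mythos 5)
Your verification is correct and is exactly the computation the paper leaves implicit: the Proposition carries no separate proof beyond the remark about $\sgn(z-x)=\sgn(D-1)$, and your direct iteration of $\prec(n)=(n+\sgn(n))L_{\mathbf{n}}$ from $\pm1$, with $L_{\mathbf{n}}=+1$ at even $n$ and $\sgn(D-1)$ at odd $n$, reproduces both lines of the statement (including the period-four sign pattern $-1,+2,+3,-4,-5,+6,\dots$ for $D<1$). The closing consistency check against \cref{lem:O:PS} and \cref{lem:IP:CE} is likewise sound.
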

\begin{definition}\label{def:UD:Bound} 
	\FirstLine{The bound cascades} \M{\CasSeq{\beta} \subset \cont{\infty}{\open{0}{1}_\Lambda}} are defined recursively as 
	\begin{align*}
		\beta_{\pm1}(\Lambda;x) &\deq \pm \frac{r(\vr{v})}{\pd{z}r(\vr{v})} \\
		\beta_{\mathord{\prec}n}(\Lambda;x) &\deq \opim{L}{\mathbf{n}}{}{} \beta_n(x) +(\modulus{\mathbf{n}}-1) \frac{r(\vr{v})}{\pd{z}r(\vr{v})} 
	\end{align*}
	The original value \M{\beta_n (\Lambda;0)} need not exist. 
\end{definition}
\begin{definition}\label{def:UD:Indic} 
	\FirstLine{The indicator cascades} \M{\CasSeq{\delta} \subset \cont{\infty}{\open{0}{1}_\Lambda}} are defined as 
	\begin{equation*}
		\delta_n(\Lambda;x) \deq \sgn(n)\left(\beta_n(x) - \Lambda\pd{\Lambda} w_\mathbf{n}(x)\right) 
	\end{equation*}
	Negative indices \M{n} refer to lower bounds, positive ones to upper bounds. A negative indicator \M{\delta_n(x)} signals breach. 
\end{definition}
\begin{definition}\label{def:UD:breach} 
	\FirstLine{The breach cascades} \M{\CasSeq{X} \subset \clopen{0}{1} \cup \set{+\infty}} are defined as 
	\begin{equation*}
		X_n(\Lambda) \deq \inf \setbuilder{x \in \open{0}{1}}{\delta_n (\Lambda;x) \le 0} 
	\end{equation*}
	\par A breach \M{X_n=0} is inevitable, and its associated bound is invalid at any \M{x \in \open{0}{1}}.
	
	A breach \M{X_n=\inf\varnothing\deq+\infty} is impossible, and its associated bound is valid at any \M{x \in \open{0}{1}}.
	
	An original breach \M{X_n\in\set{0,+\infty}} is either inevitable or impossible, contingent only on \M{\sgn(\delta_n(x))} immediately departing origin \vr{0}. There are no meaningful prerequisites to this breach. 
\end{definition}
\begin{definition}\label{def:UD:CasLen} 
	\FirstLine{The cascade lengths} \M{\CasLen[-],\CasLen[+] \in \st{Z^+}} are fixed as 
	\begin{equation*}
		\CasLen(\Lambda) \deq \inf \setbuilder{\CasLen \in \st{Z^+}}{\CasSeq{X} \not\subset \open{0}{1}} 
	\end{equation*}
	truncating each cascade at its first original breach. 
\end{definition}
\begin{lemma}\label{lem:UD:Indic} 
	\FirstLine{Indicator cascades \CasSeq{\delta} obey the recurrence relation} 
	\begin{equation*}
		\delta_{\mathord{\prec}n}(\Lambda;x) = L_{\mathbf{n}} \opim{L}{\mathbf{n}}{}{} \delta_n(\Lambda;x) 
	\end{equation*}
	amounting to 
	\begin{gather*}
		\delta_{\pm1}(\Lambda;x) = \frac{r(\vr{u})}{\pd{z}r(\vr{u})} \mp \Lambda\pd{\Lambda} w_\mathbf{z}(\Lambda;x) \\
		\delta_{\mathord{\prec}n}(\Lambda;x) = \left(\sgn(D-1)\opim{L}{\mathbf{z}}{}{}\right)^{\modulus{\mathbf{n}}}\left(\sgn(D-1)\opim{L}{\mathbf{y}}{}{}\opim{L}{\mathbf{z}}{}{}\right)^{(\modulus{n}-\modulus{\mathbf{n}})/2} \delta_{\pm1}(\Lambda;x) 
	\end{gather*}
\end{lemma}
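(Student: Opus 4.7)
The plan is to establish the recurrence $\delta_{\mathord{\prec}n}=L_{\mathbf{n}}\opim{L}{\mathbf{n}}{}{}\delta_n$ from Definitions~\ref{def:UD:Bound} and \ref{def:UD:Indic} together with the Contrast \cref{lem:IP:CE}, verify the base case $\delta_{\pm 1}$ directly, and then iterate to produce the explicit formula. The main obstacle is bookkeeping --- tracking the sign factor $\sgn(n)$ through $\prec$, and matching the $(|\mathbf{n}|-1)$-term in $\beta_{\prec n}$ against the inhomogeneous right-hand side of the Contrast equation --- rather than anything analytically substantive.

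First I would dispatch the base case. Since $\pm 1$ has component $\mathbf{n}=\mathbf{z}$, the definition of $\delta_n$ with $\beta_{\pm 1}=\pm r(\vr{v})/\pd{z}r(\vr{v})$ yields
\begin{equation*}
\delta_{\pm 1}=\sgn(\pm 1)\left(\pm\tfrac{r(\vr{v})}{\pd{z}r(\vr{v})}-\Lambda\pd{\Lambda}w_\mathbf{z}\right)=\tfrac{r(\vr{v})}{\pd{z}r(\vr{v})}\mp\Lambda\pd{\Lambda}w_\mathbf{z},
\end{equation*}
as claimed.

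For the recurrence, I would expand both sides of $\delta_{\mathord{\prec}n}=L_{\mathbf{n}}\opim{L}{\mathbf{n}}{}{}\delta_n$. Using $\sgn(\prec n)=L_\mathbf{n}\sgn(n)$ (because $\prec n=(n+\sgn(n))L_\mathbf{n}$ shares the sign of $n$ after the $L_\mathbf{n}$-flip) and $\beta_{\mathord{\prec}n}=\opim{L}{\mathbf{n}}{}{}\beta_n+(|\mathbf{n}|-1)r(\vr{v})/\pd{z}r(\vr{v})$, the identity reduces, after cancelling $L_\mathbf{n}\sgn(n)$ from both sides, to the single scalar assertion
\begin{equation*}
\opim{L}{\mathbf{n}}{}{}\Lambda\pd{\Lambda}w_\mathbf{n}-\Lambda\pd{\Lambda}w_{\mathord{\prec}\mathbf{n}}=(1-|\mathbf{n}|)\tfrac{r(\vr{v})}{\pd{z}r(\vr{v})}.
\end{equation*}
But this is precisely the $\mathbf{n}=\mathbf{y}$ row ($|\mathbf{n}|=0$, giving the Contrast equation with its $r/\pd{z}r$ inhomogeneity) and the $\mathbf{n}=\mathbf{z}$ row ($|\mathbf{n}|=1$, giving the homogeneous Contrast equation) respectively. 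So the recurrence is Contrast \cref{lem:IP:CE} in disguise.

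Finally I would iterate the recurrence from the base case. Each application of $\prec$ prepends one factor $L_\mathbf{n}\opim{L}{\mathbf{n}}{}{}$, alternating components because $\prec$ flips parity. Starting at $\pm 1$ (component $\mathbf{z}$, contributing $\sgn(D-1)\opim{L}{\mathbf{z}}{}{}$) the prepended factors come in pairs $\opim{L}{\mathbf{y}}{}{}\cdot\sgn(D-1)\opim{L}{\mathbf{z}}{}{}=\sgn(D-1)\opim{L}{\mathbf{y}}{}{}\opim{L}{\mathbf{z}}{}{}$, with an unpaired leading $\sgn(D-1)\opim{L}{\mathbf{z}}{}{}$ exactly when the terminal index $n$ is odd, i.e.\ when $|\mathbf{n}|=1$. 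Counting: the total number of prepended factors is $|n|$, of which the last $|\mathbf{n}|$ are unpaired, leaving $(|n|-|\mathbf{n}|)/2$ pairs. This yields
\begin{equation*}
\delta_{\mathord{\prec}n}=\left(\sgn(D-1)\opim{L}{\mathbf{z}}{}{}\right)^{|\mathbf{n}|}\left(\sgn(D-1)\opim{L}{\mathbf{y}}{}{}\opim{L}{\mathbf{z}}{}{}\right)^{(|n|-|\mathbf{n}|)/2}\delta_{\pm 1},
\end{equation*}
completing the Lemma.
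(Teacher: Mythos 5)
Your proposal is correct and follows exactly the route the paper intends: its entire proof is the one-line instruction to apply \M{\opim{L}{\mathbf{n}}{}{}} to \cref{def:UD:Indic} using \cref{def:UD:cas,def:UD:Bound} and Contrast \cref{lem:IP:CE}, which is precisely the reduction you carry out (the recurrence collapsing to the two rows of the contrast equation, then iterating from \M{\delta_{\pm1}}). Your explicit bookkeeping of \M{\sgn(\mathord{\prec}n)=L_{\mathbf{n}}\sgn(n)} and of the paired versus unpaired \M{\sgn(D-1)} factors just fills in details the paper leaves implicit.
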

\begin{proof}
	Apply \M{\opim{L}{\mathbf{n}}{}{}} to \cref{def:UD:Indic} using \cref{def:UD:cas,def:UD:Bound} and Contrast \cref{lem:IP:CE}. 
\end{proof}
\begin{lemma}\label{lem:UD:breach} 
	\FirstLine{Breach cascades are strictly decreasing} 
	\begin{equation*}
		X_{\mathord{\prec}n} < X_n 
	\end{equation*}
\end{lemma}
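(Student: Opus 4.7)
The plan is to recast the recurrence in \cref{lem:UD:Indic} as a first-order linear ODE in $\xi$ with the natural integrating factor from \cref{lem:IP:CO Inv}, and then extract a sign contradiction whenever $X_n \le X_{\prec n}$.

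I would first expand $\delta_{\prec n} = L_{\mathbf{n}}\opim{L}{\mathbf{n}}{}{}\delta_n$ via \cref{lem:IP:CE}. The key algebraic tidy-up is $L_{\mathbf{n}} l_{\mathbf{n}} = \modulus{l_{\mathbf{n}}} > 0$ uniformly on $\open{0}{1}_{\Lambda}$ (for $\mathbf{n} = \mathbf{y}$ via $y > x$ together with $\pd{z} r > 0$, for $\mathbf{n} = \mathbf{z}$ via $\sgn(z-x) = L_{\mathbf{z}}$), together with $(y-x)\pd{x} = \pd{\xi}$ from \cref{def:PW:ADS}. The recurrence then rearranges to $\pd{\xi}\delta_n + D^{\modulus{\mathbf{n}}}\delta_n = D^{\modulus{\mathbf{n}}}\delta_{\prec n}/\modulus{l_{\mathbf{n}}}$, and the integrating factor $g_{\mathbf{n}}(\Lambda;\xi) = \exp(D^{\modulus{\mathbf{n}}}\xi)$ from \cref{lem:IP:CO Inv} collapses this to $\pd{\xi}(g_{\mathbf{n}}\delta_n) = (\pd{\xi} g_{\mathbf{n}})\delta_{\prec n}/\modulus{l_{\mathbf{n}}}$. \Cref{def:UD:breach} gives $\delta_{\prec n} > 0$ strictly on $\open{0}{X_{\prec n}}_{\Lambda}$, hence the right-hand side is strictly positive and $g_{\mathbf{n}}\delta_n$ is strictly increasing in $\xi$ (equivalently in $x$, since $\pd{\xi} x > 0$) on this interval.

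Finally, I would argue by contradiction. Suppose $X_n \le X_{\prec n}$ with $X_{\prec n} \in \open{0}{1}$. Then $\delta_n > 0$ on $\open{0}{X_n}_{\Lambda}$ by \cref{def:UD:breach}, while smoothness forces $\lim_{x \to X_n^-}\delta_n(x) = 0$ at the infimum. Picking any $X_o \in \open{0}{X_n}$, strict monotonicity on $\open{X_o}{X_n} \subset \open{0}{X_{\prec n}}$ yields $0 < g_{\mathbf{n}}(X_o)\delta_n(X_o) < \lim_{x \to X_n^-}g_{\mathbf{n}}(x)\delta_n(x) = 0$, which is impossible. The residual case $X_{\prec n} = 0$ is automatic, since $X_n > 0$ by \cref{def:UD:CasLen} whenever $n$ precedes the cascade truncation.

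I expect the main obstacle to be the sign bookkeeping --- verifying that absorbing $L_{\mathbf{n}}$ into $l_{\mathbf{n}}$ yields a bona fide positive integrating factor regardless of whether $D < 1$ or $D > 1$, and tracing through how \cref{pro:UD:cas} enforces the correct alternating (or non-alternating) component pattern. Once this is settled, the ODE-monotonicity argument itself is entirely standard.
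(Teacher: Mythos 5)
Your proposal is correct and follows essentially the same route as the paper: the paper applies the mean value inverse \opim{L}{\mathbf{n}}{-1}{} of \cref{lem:IP:CO Inv} over \M{\closed{X_o}{X_n}} and invokes the mean value property to locate a point of the open interval where \M{\delta_{\mathord{\prec}n}<0}, which is precisely the integral (contrapositive) form of your differential identity \M{\pd{\xi}(g_\mathbf{n}\delta_n)=(\pd{\xi}g_\mathbf{n})\,\delta_{\mathord{\prec}n}/\modulus{l_{\mathbf{n}}}}. Your sign bookkeeping \M{L_{\mathbf{n}}l_{\mathbf{n}}=\modulus{l_{\mathbf{n}}}>0} and the boundary observation \M{\delta_n(X_n)=0} correspond exactly to the paper's negative numerator \M{-g_\mathbf{n}(X_o)\delta_n(X_o)} in the weighted mean.
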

\begin{proof}
	Existence of \M{\mathop{\prec}n} provides a non-empty closed interval \M{\closed{X_o}{X_n} \subset \open{0}{1}}. Combining \cref{lem:IP:CO Inv,lem:UD:Indic} 
	\begin{equation*}
		0 > \frac{-g_\mathbf{n}(\Lambda;X_o)\:\delta_n(\Lambda;X_o)} {g_\mathbf{n}(\Lambda;X_n)-g_\mathbf{n}(\Lambda;X_o)} = \frac{\opim{L}{\mathbf{n}}{-1}{} \delta_{\mathord{\prec}n}(\Lambda;X_n)}{L_{\mathbf{n}}} = \MeanValue[g_\mathbf{n}(\Lambda;x)]{\frac{\delta_{\mathord{\prec}n}(\Lambda;x)}{L_{\mathbf{n}}l_{\mathbf{n}}(\Lambda;x)}}{X_o}{X_n} 
	\end{equation*}
	requiring \M{0>\delta_{\mathord{\prec}n}(\Lambda;x)} at some \M{x \in \closed{X_o}{X_n}}. 
\end{proof}
\begin{corollary}\label{cor:UD:breach} 
	\FirstLine{Any cascade of length \M{\CasLen>1} originated inevitably \M{X_{\CasLen}=0}.} 
\end{corollary}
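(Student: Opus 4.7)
The plan is to read this as an almost direct consequence of the strict monotonicity of breach cascades (\cref{lem:UD:breach}) combined with the minimality in the definition of the cascade length $\CasLen$ (\cref{def:UD:CasLen}). Write the cascade of length $\CasLen$ as the chain $n_1 = \pm 1, n_2 = \mathop{\prec}n_1, \ldots, n_{\CasLen} = \mathop{\prec}n_{\CasLen-1}$, so that $\CasLen$ is the latest (largest absolute index) prerequisite added to the chain.

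First I would unpack the hypothesis $\CasLen > 1$. By the infimum in \cref{def:UD:CasLen}, the breach cascade of length $\CasLen - 1$ is entirely contained in $\open{0}{1}$, i.e.\ $X_{n_k} \in \open{0}{1}$ for every $k \leq \CasLen - 1$. The failure of containment at length $\CasLen$ must therefore be caused by the single newly appended element $X_{n_{\CasLen}} \notin \open{0}{1}$; by \cref{def:UD:breach} the only admissible values for an original breach are $0$ and $+\infty$, so $X_{n_{\CasLen}} \in \set{0, +\infty}$.

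Next I would invoke \cref{lem:UD:breach} once, which gives $X_{n_{\CasLen}} = X_{\mathord{\prec}n_{\CasLen-1}} < X_{n_{\CasLen-1}}$. Since $X_{n_{\CasLen-1}} \in \open{0}{1}$ by the step above, we get $X_{n_{\CasLen}} < 1$, ruling out the value $+\infty$. The only remaining possibility in $\set{0,+\infty}$ is $X_{n_{\CasLen}} = 0$, which is what \emph{originated inevitably} means in the vocabulary of \cref{def:UD:breach}.

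Honestly there is no real obstacle here: the content is already packaged by the preceding lemma and definitions, and the corollary is essentially a one-line inference. The only thing worth double-checking is the edge case $\CasLen = 1$, which is explicitly excluded by hypothesis (for $\CasLen = 1$ there is no preceding $X_{n_{\CasLen-1}}$ in $\open{0}{1}$ to bound $X_{n_{\CasLen}}$ strictly below $1$, so the argument above legitimately needs $\CasLen > 1$).
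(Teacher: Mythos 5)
Your proposal is correct and is precisely the inference the paper intends: the corollary is stated without proof immediately after \cref{lem:UD:breach} because, exactly as you argue, minimality of $\CasLen$ in \cref{def:UD:CasLen} forces the newly appended $X_{\CasLen}$ into $\set{0,+\infty}$ while the strict decrease $X_{\CasLen}<X_{\CasLen[][-1]\mathord{-}1}<1$ excludes $+\infty$. Your remark on the excluded case $\CasLen=1$ (where no predecessor exists to bound the original breach below $1$) is also the right reading of the hypothesis.
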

\begin{lemma}\label{lem:UD:0} 
	\FirstLine{There are no inevitable breaches} as \M{x\to 0}. Formally, given any \M{N\in\st{Z^+}} there is a non-empty open interval \M{\open{0}{X_N}} throughout which 
	\begin{equation*}
		\delta_{\mathord{\prec}\mathord{\prec}n}(\Lambda;x) > \delta_n (\Lambda;x) > 0 \QT{for all} n \in \st{Z} \setminus \set{0} \QT{such that} \modulus{n} < N 
	\end{equation*}
\end{lemma}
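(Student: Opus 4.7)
The plan is to establish positivity and the ordering $\delta_{\prec\prec n} > \delta_n > 0$ by induction on $|n|$, tracking a leading-order asymptotic expansion of each $\delta_n$ as $x \to 0^+$. The backbone of the analysis is the departure structure supplied by \cref{lem:KP:0} and \cref{tab:KP:0}: in the hyperbolic environment $\pd{z} r(\vr{0}) > 0$, one has $y - x \sim \Omega_{-}(\Lambda;0)\, x$ and $z - x \sim \Omega_{-}(D-1)(D + \Omega_{-})^{-1} x$ with smooth $\Lambda$-dependence, while in the non-hyperbolic environment the centre-eigenspace orbital integral of \cref{lem:KP:0} furnishes the analogous expansions with a different leading power.

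For the base case $\delta_{\pm 1}(\Lambda;x) = r(\vr{v})/\pd{z} r(\vr{v}) \mp \Lambda \pd{\Lambda} w_\mathbf{z}$, both terms vanish at $x = 0$ (the first by \cref{eq:PW:ker r} together with \cref{eq:PW:r inc z}, the second by \cref{cor:IP:CEBC}). Differentiating the kernel-point asymptote of \cref{lem:KP:0} with respect to $\Lambda$ extracts the leading-order behaviour of $\Lambda \pd{\Lambda} w_\mathbf{z}$, and comparing its magnitude with that of $r/\pd{z} r$ (both linear in $x$ in the hyperbolic case, controlled by the orbital integral otherwise) yields strict dominance of the reaction-ratio term, hence $\delta_{\pm 1}(\Lambda;x) > 0$ on some $(0, X_1)$.

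For the inductive step I would apply the recurrence $\delta_{\prec n} = L_\mathbf{n} \opim{L}{\mathbf{n}}{}{} \delta_n$ from \cref{lem:UD:Indic} to the ansatz $\delta_n(\Lambda;x) \sim c_n x^{p_n}$ with $c_n > 0$. The prefactor $L_\mathbf{n} l_\mathbf{n}(\Lambda;x)$ is strictly positive throughout $\open{0}{1}_{\Lambda}$ by design of Index \cref{def:IP:Index}, and the differential part $(1 + (y-x)\pd{x})$ — or its $D^{-1}$-rescaled cousin for $\mathbf{n} = \mathbf{z}$ — preserves the sign of the leading coefficient and shifts the exponent $p_n$ by a definite amount determined by the leading-order quotient $(y-x)/x$. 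Tracking this shift through two cascade steps along the route dictated by \cref{pro:UD:cas} in the separate cases $D<1$ and $D>1$ shows that $\delta_{\prec\prec n}(\Lambda;x) / \delta_n(\Lambda;x) \to +\infty$ as $x \to 0^+$, yielding the strict ordering on a possibly shrunken interval. Intersecting the finitely many such intervals over $|n| < N$ produces the common $X_N$.

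The principal obstacle will be rigorous bookkeeping across the four sub-cases highlighted in the opening Remark of this Section: hyperbolic versus non-hyperbolic origin (altering the power of the leading $x$-asymptote and the structure of the very first step), combined with $D<1$ versus $D>1$ (altering the cascade ordering via \cref{pro:UD:cas} and the sign $L_\mathbf{z} = \sgn(D-1)$). A secondary difficulty lies in upgrading the formal power-law ansatz to a uniform statement on a definite interval: each $\delta_n$ must be represented as a leading monomial plus a controlled smooth remainder, and one must verify that $\opim{L}{\mathbf{n}}{}{}$ respects this decomposition — for which the smoothness of the orbit at $\vr{0}$ granted by \cref{cor:KP:0 smooth} together with Taylor expansion of $r/\pd{z} r$ about $\vr{0}$ is the natural tool.
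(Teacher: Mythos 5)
Your overall strategy --- leading-order asymptotics at the origin, a case split on hyperbolicity of \M{\vr{0}} following \cref{tab:KP:0}, propagation through the cascade via the recurrence of \cref{lem:UD:Indic}, and intersection of finitely many intervals over \M{\modulus{n}<N} --- is the paper's strategy. But the step on which the lemma actually turns is absent, and one of your asymptotic claims is false. The two-step ratio \M{\delta_{\prec\prec n}/\delta_n} does \emph{not} tend to \M{+\infty} near a hyperbolic origin. Two consecutive cascade steps apply \M{\opim{L}{\mathbf{y}}{}{}} once and \M{\opim{L}{\mathbf{z}}{}{}} once; with \M{\pd{z}r(\vr{0})>0} one has \M{l_\mathbf{y}\sim\mathrm{const}/x} while \M{l_\mathbf{z}\sim\mathrm{const}\cdot x}, so the exponent shifts cancel and the ratio tends to the \emph{finite} constant \M{(1+2\Omega_o)(D+\Omega_o)/(\Omega_o\modulus{D-1})}, where \M{\Omega_o} denotes \M{\Omega_{\indexminus}(\Lambda;0)}. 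The entire content of the hyperbolic case is the algebraic verification that this constant strictly exceeds unity, together with the base-case check that the coefficient of \M{x} in \M{\delta_{\pm1}}, namely \M{D\left((1+\Omega_o)(D+2\Omega_o)\pm(1-D)\Omega_o\right)/\left((D+\Omega_o)(D+2\Omega_o)\right)}, is strictly positive for \emph{both} signs. You assert "strict dominance" and "divergence" where what is required is this pair of concrete inequalities; neither is stated, let alone checked. (Both do hold --- that verification \emph{is} the proof.) Only at a non-hyperbolic origin does the ratio genuinely diverge, because there \M{\pd{z}r(\vr{u})\to0} in the denominator of \M{l_\mathbf{y}}.

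A secondary gap: the ansatz \M{\delta_n\sim c_nx^{p_n}} cannot cover the non-hyperbolic origin, where the departure in \cref{tab:KP:0} is governed by the orbital integral and the relevant expansions are in terms of \M{r(\vr{u})} and \M{\pd{z}r(\vr{u})} rather than powers of \M{x}; these may vanish faster than any power, the indicators for \M{\modulus{n}\geq2} diverge rather than vanish, and taming the residuals needs the L'H\^{o}pital manipulation the paper performs rather than Taylor expansion of \M{r/\pd{z}r} about \M{\vr{0}} (which is an indeterminate form there, controlled only through \cref{eq:PW:r inc z}). Your finite-intersection treatment of uniformity in \M{n} is legitimate for fixed \M{N} and is, if anything, cleaner than the paper's restriction on \M{\modulus{n}}.
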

\begin{proof}
	\Cref{tab:KP:0} details two possible situations at origin \vr{0}: hyperbolic singularity and non-hyperbolic singularity. This information is combined with \cref{lem:IP:CE,lem:UD:Indic} to compute asymptotic indicators \M{\delta_n(\Lambda;x)} as \M{x \to 0}. The demonstration is not quite overt, since \M{X_N} is not intended infinitesimal. Rather, the asymptotic limit by definition \citep[pp.\,83]{Rudin76} obliges some \M{X_N>0} fulfilling the Lemma.
	
	Near a hyperbolic singularity, invoke the shorthand \M{\Omega_o\deq\Omega_{ 
	\indexminus}(\Lambda;0)>0} and compute 
	\begin{equation*}\label{eq:UD:hyp} 
		\begin{aligned}
			\delta_{\pm1}(\Lambda;x) &= \frac{(1+\Omega_o)(D+2\Omega_o)\pm(1-D)\Omega_o}{(D+\Omega_o)(D+2\Omega_o)}Dx\left(1+O(x)\right) \\
			\delta_{\mathord{\prec}n}(\Lambda;x) &= \left(\frac{4\Omega_o^3(D+\Omega_o)^3}{D^2\bmodulus{D-1}(1+2\Omega_o)}x^2\right)^{\modulus{\mathbf{n}}/2} \left(\frac{(1+2\Omega_o)(D+\Omega_o)}{\Omega_o\modulus{D-1}}\left(1+O(1-x\pd{x})\right)^2\right)^{\modulus{n}/2} \delta_{\pm1}(\Lambda;x) 
		\end{aligned}
	\end{equation*}
	Restricting 
	\begin{equation*}
		\modulus{n} = o\lrp{1/X_N} 
	\end{equation*}
	unity dominates the big \M{O}'s. In which case the indicators are all positive definite, and the factor raised to \M{\modulus{n}} exceeds unity.
	
	Near a non-hyperbolic singularity compute 
	\begin{equation*}\label{eq:UD:nonHyp} 
		\begin{aligned}
			\delta_{\pm1}(\Lambda;x) &= \frac{r(\vr{u})}{\pd{z}r(\vr{u})}\left(1 + O(\pd{z}r(\vr{u}))\right) \\
			\delta_{\mathord{\prec}n}(\Lambda;x) &= \left(\frac{4D\Lambda^3r(\vr{u})^2\pd{z}r(\vr{u})}{\modulus{D-1}}\right)^{\modulus{\mathbf{n}}/2} \left(\frac{D}{\modulus{D-1}\Lambda\pd{z}r(\vr{u})}\left(1+O(r(\vr{u})\pd{x})\right)^2\right)^{\modulus{n}/2} \delta_{\pm1}(\Lambda;x) 
		\end{aligned}
	\end{equation*}
	The indicators for \M{\modulus{n} \geq 2} diverge in the limit, because their associated bounds do. Whence L'H\^{o}pital simplifies the residuals 
	\begin{equation*}
		O\lrp{\frac{\pd{z}r(\vr{u})}{\log(r(\vr{u}))}} = O\lrp{\frac{\pd{z}r(\vr{u})\:\pd{x}}{\pd{x}\log(r(\vr{u}))}} = O(r(\vr{u})\pd{x}) 
	\end{equation*}
	on the centre eigenspace in \cref{tab:KP:0}. Restricting 
	\begin{equation*}
		\modulus{n} = o\lrp{\inf \image{\modulus{\frac{\log r}{\pd{z}r}}}{\image{\vr{v}}{\open{0}{X_N}_{\Lambda}}}} \QT{for interior orbital} \image{\vr{v}}{\open{0}{X_N}_{\Lambda}}\subset\st{V}(\Lambda) 
	\end{equation*}
	unity dominates the big \M{O}'s. In which case the indicators are all positive definite, and the factor raised to \M{\modulus{n}} exceeds unity. 
\end{proof}
\begin{corollary}\label{cor:UD:result} 
	\FirstLine{All breaches are impossible} \M{\CasSeq{X}=\set{X_{\pm1}}=\set{+\infty}}, so 
	\begin{equation*}
		-\frac{r(\vr{u})}{\pd{z}r(\vr{u})} < \Lambda\pd{\Lambda} w_\mathbf{z}(\Lambda;x) < \frac{r(\vr{u})}{\pd{z}r(\vr{u})} \quad \T{throughout } \open{0}{1}_{\Lambda} \T{ whenever } D\ne 1 
	\end{equation*}
\end{corollary}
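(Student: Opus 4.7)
The plan is a short contradiction that ascends each bound cascade to its deepest breach and is killed there by \cref{lem:UD:0}. Suppose the conclusion fails, so at least one cascade length is finite, say \M{\CasLen[+]<\infty} (the case \M{\CasLen[-]<\infty} is symmetric).

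I first split on whether \M{\CasLen[+]>1} or \M{\CasLen[+]=1}. In the former, \cref{cor:UD:breach} forces \M{X_{\CasLen[+]}(\Lambda)=0}, so the deepest breach in the \M{+}-cascade is inevitable and original. In the latter, \cref{def:UD:CasLen} gives \M{X_{+1}(\Lambda)\in\set{0,+\infty}}; since \M{X_{+1}=+\infty} is exactly the asserted conclusion, only \M{X_{+1}=0} remains to be excluded. Either case singles out an index \M{n\in\set{+1,\CasLen[+]}} in the cascade at which \M{X_n(\Lambda)=0}.

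The second and concluding step invokes \cref{lem:UD:0} with \M{N\deq\CasLen[+]+1}. It produces a non-empty open interval \M{\open{0}{X_N}} throughout which \M{\delta_m(\Lambda;x)>0} for every \M{m\in\st{Z}\setminus\set{0}} with \M{\modulus{m}<N}. In particular \M{X_m(\Lambda)\geq X_N>0} for every such \M{m}, flatly contradicting \M{X_n=0}. The same argument disposes of \M{\CasLen[-]}, forcing \M{\CasLen[\pm]=1} and \M{X_{\pm1}=+\infty}. Unwinding \cref{def:UD:Indic} at \M{n=\pm1} then reads out directly into the stated double inequality on \M{\Lambda\pd{\Lambda}w_\mathbf{z}}.

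The genuine obstacle is not this short closing contradiction but \cref{lem:UD:0} itself, whose asymptotic positivity of \M{\delta_n} at both hyperbolic and non-hyperbolic origins is borrowed wholesale. Once that is in hand, the present corollary is essentially assembly: \cref{lem:UD:breach} makes each breach prerequisite to the next, \cref{cor:UD:breach} drives the deepest breach to the origin whenever the cascade exceeds length one, and \cref{lem:UD:0} denies the origin any breach at all.
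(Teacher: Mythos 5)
Your proposal is correct and follows exactly the route the paper intends: the corollary is left without an explicit proof precisely because it is the assembly you describe, with \cref{cor:UD:breach} pushing any cascade of length exceeding one back to an inevitable original breach and \cref{lem:UD:0} forbidding that breach, leaving only \M{X_{\pm1}=+\infty} and the double bound via \cref{def:UD:Indic}. The only quibble is your opening phrase that ``at least one cascade length is finite'' --- the lengths are finite by \cref{def:UD:CasLen}; what failure of the conclusion actually yields is either \M{\CasLen>1} or \M{X_{\pm1}=0}, which is exactly the case split you then carry out.
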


\section{Compact Properties} \label{sec:CP} 
\begin{remark}
	This Section establishes a compact domain \M{\set{(\Lambda;x)}} sufficient to house every plane wave. The final Lemma bridges the domain interior -- where the Contrast Lemma operates -- to the domain boundary, taking the final step in our proof. 
\end{remark}
\begin{definition}\label{def:CP:CED} 
	\FirstLine{The environment domain} is 
	\begin{equation*}
		\olst{X}(\ol{r};D) \deq \closed{0}{\ENorm^{-1}} \times \closed{0}{1} 
	\end{equation*}
	where the environment norm on \M{\cont{\infty}{\olst{U}}} is 
	\begin{equation*}
		\ENorm \deq 2\int_{0}^{1} r\lrp{\vr{1}-\vr{1}_\mathbf{x}(1-x)-\vr{1}_\mathbf{z}(1-x)^{\max(D,1)}} \dd{x} 
	\end{equation*}
	This is a compact and perfect topological space: every point is a limit point \M{(\ol{\Lambda};\ol{x}) \in \olst{X}(\ol{r};D)}, not all of which are interior \M{(\Lambda;x) \in \st{X}(r;D)}. 
\end{definition}
\begin{definition}\label{def:CP:CP} 
	\FirstLine{A compact property} \M{\olvr{f}\in\cont{0}{\olst{X}(\ol{r};D)}} is an interior property \M{\vr{f}\in\cont{\infty}{\open{0}{1}_{\Lambda}}} compactified according to 
	\begin{equation*}
		\olvr{f}(\ol{\Lambda};\ol{x}) \deq \lim\limits_{(\Lambda;x) \to (\ol{\Lambda};\ol{x})} \vr{f}(\Lambda;x) 
	\end{equation*}
	Existence and uniqueness of the limit is emphatically demanded by uniform continuity of \M{\olvr{f}\in\cont{0}{\olst{X}(\ol{r};D)}}. 
\end{definition}
\begin{proposition}\label{pro:CP:spurious} 
	\FirstLine{The spurious orbit \M{\olst[0]{V}}} obeying 
	\begin{align*}
		\olvr{v}(0;\ol{x}) &\deq \vr{1}\ol{x} \\
		\Lambda \pd{\Lambda+} \olvr{v}(0;\ol{x}) &\deq \Lambda \ol{r}(\vr{1}\ol{x}) \left(\vr{1}_\mathbf{y}+\left(1-D^{-1}\right)\vr{1}_\mathbf{z}\right) 
	\end{align*}
	is the unique orbit limit as \M{\Lambda\to0}. It is the only spurious compact orbit on the environment domain. In the vicinity of the limit, each compact orbit \M{\olst[\Lambda]{V}} is strictly lower class, containing the plane wave of \M{\Lambda}. 
\end{proposition}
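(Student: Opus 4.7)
The plan is to unwind the three assertions separately, drawing primarily on the phase-space trap of Lemma~\ref{lem:O:PS} and the terminal eigenmode catalogue of \cref{lem:KP:1}.

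First I would establish the uniform limit. Lemma~\ref{lem:O:PS} confines $\ol{v}_\mathbf{y}(\Lambda;\ol{x}) - \ol{x} \in \closed{0}{\norm{\Lambda\ol{r}}{\olst[\Lambda]{S}}}$, a band collapsing to zero with $\Lambda$. Coupled via the distillate \cref{lem:O:NADS}, the $\mathbf{z}$-component $v_\mathbf{z} - x$ is driven onto its quasi-equilibrium $(1-D^{-1})(v_\mathbf{y}-x) = O(\Lambda)$ and also collapses. Hence $\olvr{v}(\Lambda;\ol{x}) \to \vr{1}\ol{x}$ uniformly, agreeing with the stated $\olvr{v}(0;\ol{x})$. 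To extract the first-order correction I would substitute the ansatz $v_\mathbf{y} = x + \Lambda a(x) + o(\Lambda)$, $v_\mathbf{z} = x + \Lambda b(x) + o(\Lambda)$ into the distillate and take leading order in $\Lambda$: the balance $\Lambda r(\vr{1}x)/(\Lambda a) = 1 + O(\Lambda)$ forces $a = \ol{r}(\vr{1}\ol{x})$ (using $\pd{y}\ol{r}=0$ to harmlessly absorb the $y$-dependence), and $D(a-b)/a = 1 + O(\Lambda)$ then forces $b = (1-D^{-1})a$, exactly reproducing the proposition's formula for $\pd{\Lambda+}\olvr{v}(0;\ol{x})$.

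Uniqueness of the spurious compact orbit on $\olst{X}(\ol{r};D)$ is immediate from Lemma~\ref{lem:O:CO}: the entire $\Lambda$-family of unique compact orbits can accumulate on at most one limit at $\Lambda = 0$, and the construction above pins that limit down. For the classification, a singular terminal orbit carries $\pd{x}\ol{v}_\mathbf{y}(\Lambda;1) = 1 + \Omega_{
\indexminus}(\Lambda;1) = 1 + \Lambda\pd{x}\ol{r}(\vr{1}) + O(\Lambda^2)$ per \cref{lem:KP:1}, which tends to $1 > 1/2$ as $\Lambda \to 0$. Cross-referencing \cref{tab:KP:class}, every nearby compact orbit then lies strictly in class L, whereupon \cref{cor:KP:class} identifies its interior with the unique plane wave of $\Lambda$.

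The main obstacle is excluding regular terminal for small $\Lambda > 0$, since the bulk spurious limit only pins behaviour away from $\ol{x}=1$. A strictly upper class orbit would carry $\pd{x}\ol{v}_\mathbf{y}(\Lambda;1) = \Lambda\ol{r}/(\ol{v}_\mathbf{y}-1) = 0$ at the kernel per \cref{tab:KP:class}, incompatible with the spurious slope $\pd{x}\ol{v}_\mathbf{y}(0;\ol{x}) \to 1$ only once a continuity of this terminal derivative across $(0;1) \in \olst{X}(\ol{r};D)$ is demonstrated. The delicate point is that the bulk ansatz $v_\mathbf{y} \approx x + \Lambda\ol{r}(\vr{1}\ol{x})$ degenerates near $\ol{x}=1$ where the reaction vanishes, so a matched boundary-layer analysis reconciling the bulk with the terminal eigenmode of \cref{lem:KP:1} is likely required to force $\ol{v}_\mathbf{y}(\Lambda;1) = 1$ exactly rather than merely to leading order.
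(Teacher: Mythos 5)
Your route is essentially the paper's: both expand the non-autonomous distillate of \cref{lem:O:NADS} to first order in \M{\Lambda}, and your leading-order balances \M{a=\ol{r}(\vr{1}\ol{x})}, \M{b=(1-D^{-1})a} reproduce the paper's asymptotic solution exactly; the uniqueness and lower-class-slope steps also agree with the paper's reading of \cref{lem:KP:1} and \cref{tab:KP:class}.

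The genuine gap is the one you flag yourself and then leave open: nothing in your argument forces singular termination \M{\ol{v}_\mathbf{y}(\Lambda;1)=1} for small \M{\Lambda>0}, and without that the classification step collapses (a regular terminus with \M{\ol{v}_\mathbf{y}(\Lambda;1)-1} of order \M{o(\Lambda)}, or even \M{O(\sqrt{\Lambda})} as permitted by integrating \M{\pd{x}w_\mathbf{y}=2(\Lambda r-(v_\mathbf{y}-x))}, would make the orbit strictly upper class and kill the plane wave). Saying a matched boundary-layer analysis is "likely required" is not a proof. The device the paper uses to close this is that the expansion is \emph{not} singular at \M{\ol{x}=1}: every correction term, including the remainder, carries a multiplicative factor of the reaction, \M{\olvr{v}(\Lambda;x)=\olvr{v}(0;\ol{x})+\Lambda\pd{\Lambda+}\olvr{v}(0;\ol{x})+O(\Lambda^2\ol{r}(\olvr{u}))}, and since \M{\ol{r}(\vr{1})=0} by \cref{eq:PW:ker r} all corrections vanish identically at the terminus, pinning \M{\olvr{v}(\Lambda;1)=\vr{1}} exactly rather than to leading order. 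There is therefore no layer to match; what you would need to supply instead is the uniform, reaction-weighted form of the remainder (e.g.\ by a Gronwall estimate on the difference between the exact orbit and the two-term ansatz, with the forcing proportional to \M{\ol{r}}). Your diagnosis that the ansatz "degenerates where the reaction vanishes" has the difficulty backwards: the vanishing of \M{\ol{r}} at \M{\vr{1}} is precisely what rescues the terminal value. A smaller caveat: identifying \M{\pd{x}\ol{v}_\mathbf{y}(\Lambda;1)} with \M{1+\Omega_{
\indexminus}} rather than \M{1+\Omega_{
\indexplus}} presupposes \M{\Upsilonsub_{
\indexminus}\neq0} in \cref{tab:KP:1Hyp}; the paper sidesteps this by reading the terminal slope directly off the asymptotic solution.
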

\begin{proof}
	Non-Autonomous \cref{lem:O:NADS} has the unique asymptotic solution 
	\begin{equation*}
		\olvr{v}(\Lambda;x) = \olvr{v}(0;\ol{x}) + \Lambda\pd{\Lambda+} \olvr{v}(0;\ol{x}) + O\lrp{\Lambda^2\ol{r}(\olvr{u})} 
	\end{equation*}
	The limit is not orbital, because it does not support a travelling wave coordinate for Autonomous \cref{def:PW:ADS}. This failing is peculiar to \M{\ol{\Lambda}=0}. For \M{\Lambda>0}, the asymptotic solution is verifiably orbital, of strictly lower class as \M{\Lambda\pd{\Lambda} \olvr{v}(\Lambda;1)=O(\ol{r}(\vr{1}))=0} and \M{\pd{x} \olvr{v}(\Lambda;1)=\vr{1}-O(\Lambda^2)}. 
\end{proof}
\begin{lemma}\label{lem:CP:CO} 
	\FirstLine{The family of interior orbits \M{\image{\st{V}}{\open{0}{\ENorm^{-1}}}} compactifies to the environment orbit \M{\olst{V} \in \cont{0}{\olst{X}(\ol{r};D)}}}. This surface in four-dimensional space naturally decomposes into the environment domain \M{\olst{X}(\ol{r};D)} and the environment orbit projection \M{\op{\ol{X}}\olvr{v}(\ol{\Lambda};\ol{x})}. It inhabits the environment phase space 
	\begin{equation*}
		\olst{V}\subseteq\olst{S} \deq \olst{X}(\ol{r};D) \times \closed{\max(\ol{x},\ol{z})}{\:\ol{x}+\ol{\Lambda}\norm{\ol{r}}{\olst{S}}} \times \closed{\ol{x}\min(D,1)}{\:\ol{x}\max(D,1)} 
	\end{equation*}
	where \M{\norm{\ol{r}}{\olst{S}}\deq\norm{\ol{r}}{\olst[\Lambda]{S}}} independently of \M{\Lambda}, using \cref{eq:PW:U def}. 
\end{lemma}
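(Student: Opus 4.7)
The plan is to assemble $\olst{V}$ from the three boundary pieces of $\olst{X}(\ol{r};D)$ -- the coordinate edges $\ol{x}\in\set{0,1}$, the lentor endpoints $\ol{\Lambda}\in\set{0,\ENorm^{-1}}$, and the interior -- verifying joint continuity at each seam via the contrast bound.

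First, on the interior of $\olst{X}(\ol{r};D)$, \cref{lem:O:CO} delivers a unique compact orbit $\olvr{v}(\Lambda;\cdot)\in\cont{0}{\closed{0}{1}_{\Lambda}}$, continuous in $x$ by \cref{lem:O:NADS}. To obtain continuity in $\Lambda$ I integrate the contrast operator: \cref{cor:IP:CEBC} pins the contrast to zero at $\ol{x}=0$, and \cref{lem:IP:Bound} confines $\Lambda\pd{\Lambda}\ol{w}_{\mathbf y}$ and $\Lambda\pd{\Lambda}\ol{w}_{\mathbf z}$ within a uniform envelope on every compact lentor subinterval, yielding local Lipschitz behaviour in $\Lambda$. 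A standard triangle-inequality argument upgrades the two separate continuities to joint continuity.

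Extension to the coordinate edges is routine. At $\ol{x}=0$ every orbit in the family already passes through $\vr{0}$; at $\ol{x}=1$, \cref{cor:KP:1 smooth} supplies a terminal value $\olvr{v}(\Lambda;1)$, and the quantity $\ol{w}_{\mathbf y}=(\ol{v}_{\mathbf y}-\ol{x})^2$ from \cref{def:IP:IP} is engineered precisely so that the contrast bound of \cref{lem:IP:Bound} survives to the terminus despite the possibly non-existent $\pd{x}\ol{v}_{\mathbf z}(\Lambda;1)$ flagged in \cref{cor:KP:1 smooth}. For $\ol{\Lambda}=0$, \cref{pro:CP:spurious} identifies the spurious limit $\olvr{v}(0;\ol{x})=\vr{1}\ol{x}$ and exhibits approach of order $\ol{\Lambda}$ uniformly in $\ol{x}$. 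For $\ol{\Lambda}=\ENorm^{-1}$, the envelope of \cref{lem:O:PS} is calibrated by the norm to remain a bounded interval, and the contrast bound again produces a one-sided limit from the interior.

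Finally, the containment $\olst{V}\subseteq\olst{S}$ transfers in the limit: each interior orbit obeys $\olvr{v}(\Lambda;\cdot)\in\olst[\Lambda]{S}$ by \cref{lem:O:PS}, while the envelope functions $\max(\ol{x},\ol{z})$ and $\ol{x}+\ol{\Lambda}\norm{\ol{r}}{\olst{S}}$ are themselves continuous on $\olst{X}(\ol{r};D)$, so containment is stable under the limits taken above. The main obstacle I anticipate is the right endpoint $\ol{\Lambda}=\ENorm^{-1}$: the envelope of \cref{lem:O:PS} must be shown genuinely uniform across the closed lentor interval, amounting to a monotone comparison of the orbital identity $\tfrac12\pd{x}(\ol{v}_{\mathbf y}-\ol{x})^2+(\ol{v}_{\mathbf y}-\ol{x})=\Lambda\ol{r}(\olvr{v})$ against the curve $(x,1,1-(1-x)^{\max(D,1)})$ baked into the definition of $\ENorm$ -- which is precisely what motivates the specific form of the environment norm.
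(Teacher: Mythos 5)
Your proposal is correct and follows essentially the same route as the paper: uniform Lipschitz control in \M{\Lambda} from \cref{lem:IP:Bound}, the spurious limit of \cref{pro:CP:spurious} securing \M{\ol{\Lambda}=0}, and the uniform limit theorem (your triangle-inequality upgrade) delivering joint continuity. The only step the paper makes explicit that you leave implicit is recovering \M{\ol{v}_\mathbf{y}} from \M{\ol{w}_\mathbf{y}} as the square root of a uniformly continuous positive function, and your anticipated obstacle at \M{\ol{\Lambda}=\ENorm^{-1}} dissolves because \M{\norm{\ol{r}}{\olst{S}}} is \M{\Lambda}-independent, so the contrast envelope is already uniform on the closed lentor interval.
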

\begin{proof}
	Following \cref{pro:CP:spurious}, the limits demanded by \cref{def:CP:CP} are only insecure when \M{\ol{\Lambda}>0}. Contrast \Cref{lem:IP:Bound} states that \M{\vr{w}(\Lambda;x)} converges uniformly to \M{\vr{w}(\ol{\Lambda};x)} on \M{\open{0}{1}_\Lambda} as \M{\Lambda\to\ol{\Lambda}>0}. Which suffices to invoke the uniform limit theorem \citep[pp.\,149]{Rudin76}, guaranteeing a unique, uniformly continuous limit \M{\vr{w}(\Lambda;x)\to\olvr{w}(\ol{\Lambda};\ol{x}) \in \cont{0}{\closed{0}{1}_{\ol{\Lambda}}}} as \M{\Lambda\to\ol{\Lambda}>0}. Finally, the square root of any uniformly continuous positive function is unique and uniformly continuous. 
\end{proof}
\begin{corollary}\label{cor:CP:r} 
	\FirstLine{The reaction term is continuously differentiable \M{\ol{\Lambda}\ol{r}(\olvr{u})\in\cont{1}{\olst{X}(\ol{r};D)}}} as \M{\ol{\Lambda}\ol{r}(\olvr{u})=0} whenever \M{\ol{x}=1}. 
\end{corollary}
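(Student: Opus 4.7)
The plan is to verify continuous differentiability of \M{F(\ol{\Lambda};\ol{x})\deq\ol{\Lambda}\ol{r}(\olvr{v}(\ol{\Lambda};\ol{x}))} on the compact environment domain \olst{X}(\ol{r};D). Continuity is already in hand from \cref{lem:CP:CO} composed with continuity of \ol{r}, and interior smoothness on \st{X}(r;D) is automatic via \cref{lem:O:NADS}. The work is to extend both partial derivatives continuously to each of the four boundary edges of \olst{X}(\ol{r};D). The statement of the corollary signals where the difficulty is concentrated: \M{F\equiv 0} holds not only on the hinted edge \M{\set{\ol{x}=1}} but also, via \cref{eq:PW:ker r}, on \M{\set{\ol{x}=0}} and \M{\set{\ol{\Lambda}=0}}, leaving only one boundary on which \M{F} does not vanish a priori.

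First I would dispose of the three vanishing edges. At \M{\ol{\Lambda}=0}, \cref{pro:CP:spurious} gives \M{\olvr{v}(0;\ol{x})=\vr{1}\ol{x}} together with its explicit \M{\Lambda}-tangent, so \M{\pd{\ol{\Lambda}}F\to\ol{r}(\vr{1}\ol{x})} and \M{\pd{\ol{x}}F=O(\ol{\Lambda})\to 0} uniformly in \ol{x}. At \M{\ol{x}=0}, \cref{cor:KP:0 smooth} supplies \M{\olvr{v}\in\cont{\infty}{\clopen{0}{1}_\Lambda}}, so the chain rule transports both partials continuously across. For the remaining, non-vanishing edge \M{\ol{\Lambda}=\ENorm^{-1}}, the uniform bounds of \cref{lem:IP:Bound} on \M{\Lambda\pd{\Lambda}(\Lambda\ol{r})} and \M{\Lambda\pd{\Lambda}w_{\mathbf{y}}}, coupled with the uniform-limit argument already deployed in \cref{lem:CP:CO}, transports the partials continuously through.

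The hard part will be the edge \M{\ol{x}=1}, where \cref{cor:KP:1 smooth} forewarns that \M{\pd{x}\ol{v}_\mathbf{z}(\Lambda;1)} need not even exist. A chain-rule expansion on the interior,
\begin{equation*}
\pd{\ol{x}}F=\ol{\Lambda}\lrp{\pd{x}\ol{r}(\olvr{v})+\pd{z}\ol{r}(\olvr{v})\,\pd{\ol{x}}\ol{v}_\mathbf{z}}
\end{equation*}
(using \M{\image{\pd{y}\ol{r}}{\olst{U}}=\set{0}} from \cref{eq:PW:U def}), therefore threatens a divergent product. The rescue is precisely the identity highlighted by the corollary statement: \cref{eq:PW:ker r} forces \M{\ol{r}(1,\cdot,\cdot)\equiv 0}, hence \M{\pd{z}\ol{r}(\vr{1}\cdot)=0}, whereupon smoothness of \ol{r} (\cref{eq:PW:r smooth}) yields \M{\pd{z}\ol{r}(\olvr{v})=O(1-\ol{x})} uniformly in \M{\Lambda}. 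Reading the terminal asymptotes tabulated in \cref{tab:KP:1Hyp,tab:KP:1nonHyp}, I expect to find that any divergence of \M{\pd{\ol{x}}\ol{v}_\mathbf{z}} is strictly weaker than \M{(1-\ol{x})^{-1}} in every class, so the product vanishes in the limit and \M{\pd{\ol{x}}F\to\ol{\Lambda}\pd{x}\ol{r}(\vr{1})}, continuous in \M{\Lambda}. The \M{\Lambda}-partial at \M{\ol{x}=1} is \M{\ol{r}(\vr{1})+\ol{\Lambda}\pd{\Lambda}\ol{r}(\vr{1})=0}, trivially continuous. The remaining labour is a case-by-case verification against the classes of \cref{tab:KP:class}, which I expect to be tedious but mechanical: the only conceptual ingredient is the kernel cancellation \M{\pd{z}\ol{r}(\vr{1}\cdot)=0} absorbing the wayward \M{\pd{\ol{x}}\ol{v}_\mathbf{z}}.
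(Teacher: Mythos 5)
Your proposal is correct and follows essentially the same route as the paper, whose entire proof is the corollary's own ``as'' clause: since \M{\ol{r}} vanishes identically on the face \M{\ol{x}=1} by \cref{eq:PW:ker r}, smoothness \labelcref{eq:PW:r smooth} gives \M{\pd{z}\ol{r}(\olvr{v})=O(1-\ol{x})}, which absorbs the possibly divergent \M{\pd{x}\ol{v}_\mathbf{z}} flagged by \cref{cor:KP:1 smooth} (the terminal asymptotes of \cref{tab:KP:1Hyp,tab:KP:1nonHyp} indeed give divergence no worse than \M{(1-\ol{x})^{D/(-\Omega_{\mathbf{j}})-1}}), while \cref{lem:CP:CO}, \cref{pro:CP:spurious} and \cref{cor:KP:0 smooth} cover the remaining edges exactly as you describe. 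The one cosmetic slip is that for strictly upper class orbits the terminus is the regular kernel point \M{(1,\ol{Y},\ol{Z})^\intercal}, so the boundary value of \M{\pd{\ol{x}}\lrp{\ol{\Lambda}\ol{r}(\olvr{v})}} is \M{\ol{\Lambda}\pd{x}\ol{r}(\olvr{v}(\ol{\Lambda};1))} rather than \M{\ol{\Lambda}\pd{x}\ol{r}(\vr{1})}; continuity in \M{\ol{\Lambda}} still follows from \cref{lem:CP:CO}.
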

\begin{lemma}\label{lem:CP:PW} 
	\FirstLine{Every plane wave supported by environment \M{(\ol{r},D)} is interior to \olst{V}} and bound to obey 
	\begin{equation*}
		1-(1-\ol{x})^{\min(D,1)} \leq \ol{v}_{\mathbf{z}}(\Lambda;\ol{x}) \leq 1-(1-\ol{x})^{\max(D,1)} 
	\end{equation*}
\end{lemma}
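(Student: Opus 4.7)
My plan is to prove the $\mathbf{z}$-envelope first, since the ``interior'' claim will then split into the trivial $\Lambda>0$ (immediate from \cref{pro:CP:spurious}) and a harder $\Lambda<\ENorm^{-1}$ that piggybacks on the envelope. For the envelope itself one side is free: the phase-space constraint $v_\mathbf{z}\in\closed{x\min(D,1)}{x\max(D,1)}$ of \cref{lem:O:PS} directly yields $1-v_\mathbf{z}\leq(1-x)^{\min(D,1)}$ for $D\geq 1$ and $1-v_\mathbf{z}\geq(1-x)^{\max(D,1)}$ for $D\leq 1$. Only the other side --- where the exponent is $D$ itself --- requires genuine work.

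For that, I plan to exploit the linearity of the non-autonomous distillate \cref{lem:O:NADS} in $v_\mathbf{z}$: its $\mathbf{z}$-equation $(v_\mathbf{y}-x)\pd{x} v_\mathbf{z}+D v_\mathbf{z}=Dv_\mathbf{y}$ is satisfied by the comparison $F(x)\deq 1-(1-x)^D$ when $v_\mathbf{y}$ is replaced by $1$, so the deficit $\phi\deq(1-v_\mathbf{z})-(1-x)^D=F-v_\mathbf{z}$ obeys the driven linear ODE
\begin{equation*}
(v_\mathbf{y}-x)\pd{x}\phi+D\phi=D(v_\mathbf{y}-1)\bigl[(1-x)^{D-1}-1\bigr],
\end{equation*}
carrying the integrating factor $\ol{g}_\mathbf{z}=e^{D\xi}$ already deployed in \cref{lem:IP:CO Inv}. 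Integrating from $\xi=-\infty$ --- where both $\phi$ and $e^{D\xi}$ vanish by the singular original asymptote of \cref{tab:KP:0} --- produces
\begin{equation*}
e^{D\xi}\phi(x)=D\int_{-\infty}^{\xi}e^{D\xi'}(v_\mathbf{y}-1)\bigl[(1-x)^{D-1}-1\bigr]\dd{\xi'}.
\end{equation*}
Interior strict monotonicity gives $v_\mathbf{y}-1<0$, and $\sgn((1-x)^{D-1}-1)=-\sgn(D-1)$, so the integrand has the constant sign $\sgn(D-1)$ throughout $\open{0}{1}_\Lambda$; $\phi$ inherits that sign, which is exactly the envelope claimed, with $\phi(0)=\phi(1)=0$ holding by construction.

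With the envelope in hand, strict monotonicity \cref{eq:PW:r inc z} promotes the strict interior bound $v_\mathbf{z}<1-(1-x)^{\max(D,1)}$ to the strict pointwise inequality $r(\vr{v})<r_{\max}(x)\deq r(x,1,1-(1-x)^{\max(D,1)})$, whose integral over $\open{0}{1}$ is $\ENorm/2$. The bound $\Lambda<\ENorm^{-1}$ is then extracted by combining this with the energy identity
\begin{equation*}
2\Lambda\int_0^{x_0}r(\vr{v})\dd{x}=(v_\mathbf{y}(x_0)-x_0)^2+2\int_0^{x_0}(v_\mathbf{y}-x)\dd{x}
\end{equation*}
obtained by integrating $\pd{\xi}((v_\mathbf{y}-x)^2/2)=(v_\mathbf{y}-x)(\Lambda r-(v_\mathbf{y}-x))$ along the orbit. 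Dominating each term on the right via $v_\mathbf{y}-x\leq 1-x$ telescopes cleanly to the constant $1$ at $x_0=1$, and strict $v_\mathbf{y}<1$ makes the bound strict: $2\Lambda\int_0^1r(\vr{v})\dd{x}<1$.

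The hard step --- and the main obstacle --- is closing the final gap from $\int r(\vr{v})$ to $\int r_{\max}$: the pointwise $r(\vr{v})<r_{\max}$ pulls in the wrong direction for a direct comparison against the preceding strict bound. My intended route is to re-run the telescoping with the envelope substituted pointwise throughout, exploiting the strict slack in $v_\mathbf{z}<1-(1-x)^{\max(D,1)}$ to lift the bound from $r$ up to $r_{\max}$ while preserving strictness --- the exact accounting being the delicate piece. If that closure resists clean algebraic manipulation, a continuity fallback via \cref{cor:KP:lim} (the transition $\Upsilonsub_{\indexminus}(\Lambda;1)\to 0$ at the upper threshold) confirms $\Lambda\ENorm<1$, since the plane wave ceases to exist precisely as $\Lambda$ attempts to cross this boundary of the environment domain.
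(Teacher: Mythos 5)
Your envelope argument is correct and takes a genuinely different route from the paper. Where you integrate the deficit \M{\phi\deq F-v_\mathbf{z}} explicitly against the weight \M{\ol{g}_\mathbf{z}=e^{D\xi}} and read the sign of \M{\phi} off the constant-signed forcing \M{D(v_\mathbf{y}-1)\left((1-x)^{D-1}-1\right)}, the paper instead supposes the violation set \M{\st{D}_{\Lambda}} nonempty, notes the indicator vanishes at both ends of the orbit, and extracts a contradiction at the interior stationary point supplied by the mean value theorem. Both are sound; yours has the advantage of delivering a signed integral representation of the deficit rather than bare non-negativity, and it reuses the weight already installed in \cref{lem:IP:CO Inv}. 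Your observation that one side of each inequality is already free from the phase space of \cref{lem:O:PS} is also correct. Your energy identity is exactly the paper's after integrating \M{2\int_0^1 x\,\pd{x}v_\mathbf{y}\,\dd{x}} by parts, and the strict bound \M{2\Lambda\int_0^1 r(\vr{v})\,\dd{x}<1} follows as you say from \M{v_\mathbf{y}<1} on the interior of a plane wave.

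The gap you flag is real and your proposal does not close it. Monotonicity of \M{r} in \M{z} combined with the envelope converts \M{2\Lambda\int_0^1 r(\vr{v})\,\dd{x}<1} only into \M{2\Lambda\int_0^1 r\left(x,1,1-(1-x)^{\min(D,1)}\right)\dd{x}<1}, because it is the \emph{lower} envelope that sits beneath \M{r(\vr{v})}; this is strictly weaker than \M{\Lambda\ENorm<1}, which carries the upper envelope inside the integral. Closing the distance requires showing that the discarded positive term \M{2\int_0^1 x\,\pd{x}v_\mathbf{y}\,\dd{x}} absorbs \M{2\Lambda\int_0^1\left(r\left(x,1,1-(1-x)^{\max(D,1)}\right)-r(\vr{v})\right)\dd{x}}, and ``re-running the telescoping with the envelope substituted pointwise'' is a description of that difficulty, not an argument --- no accounting is performed. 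The fallback via \cref{cor:KP:lim} is circular: that corollary classifies the limiting orbit within the compact environment domain whose very adequacy (that every plane wave satisfies \M{\Lambda<\ENorm^{-1}}) this Lemma is charged with establishing. You are in fair company --- the paper itself disposes of this step in one line by citing ``the upper bound on \M{\ol{z}}'', which on its face points the monotonicity in the unhelpful direction --- but within your own write-up the step remains open, so the ``interior to \olst{V}'' half of the Lemma is unproven.
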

\begin{proof}
	The bounds on \M{\ol{v}_{\mathbf{z}}} are vacuous in equidiffusive environments, so take \M{D\ne1} and constant \M{\Delta \in \set{D,1}}. Define the indicator 
	\begin{equation*}
		\delta(\Lambda;x) \deq (D+1-2\Delta)\left(v_{\mathbf{z}}(\Lambda;x) - 1+(1-x)^{\Delta}\right) \in \cont{\infty}{\open{0}{1}_{\Lambda}} 
	\end{equation*}
	and the open set on which the quoted bounds on \M{\ol{v}_{\mathbf{z}}} are violated 
	\begin{equation*}
		\st{D}_{\Lambda} \deq \image{\delta^{-1}}{\st{R^-}} 
	\end{equation*}
	The plane wave projection \M{\op{X}\vr{v}(\Lambda;x)} is governed by Non-Autonomous \cref{lem:O:NADS} in the form 
	\begin{equation*}
		\left(D+(v_{\mathbf{y}}-x)\pd{x}\right)\delta(\Lambda;x) = (D+1-2\Delta)\left(D(1-x)^{\Delta} - \Delta (v_{\mathbf{y}}-x) (1-x)^{(\Delta-1)} - D (1-v_{\mathbf{y}})\right) 
	\end{equation*}
	and Boundary Constraints~\labelcref{def:PW:BC0,def:PW:BC1} dictate \M{\image{\ol{\delta}}{\set{\Lambda}\times\set{0,1}} = \set{0}}. Whereupon the mean value theorem demands a stationary point \M{x\in\st{D}_{\Lambda}\neq\varnothing} impossibly governed by 
	\begin{equation*}
		0 > D\delta(\Lambda;x) = (D+1-2\Delta)\left(D(1-x)^{\Delta} - \Delta (y-x) (1-x)^{(\Delta-1)} - D (1-y)\right) > 0 
	\end{equation*}
	following \cref{lem:O:PS} with \M{y<1} throughout a plane wave. Therefore \M{\st{D}_{\Lambda}=\varnothing}, bounding \M{\ol{v}_{\mathbf{z}}(\Lambda;\ol{x})} as claimed.
	
	To ensure plane wave \M{{\st{V}(\Lambda)} \in \cont{0}{\st{X}(r;D)}}, integrate Non-Autonomous \cref{lem:O:NADS} 
	\begin{equation*}
		1=\ol{v}_\mathbf{y}(\Lambda;1)^2 = 2 \Lambda \int_{0}^{1} r(\vr{u}) \dd{x} \: + \: 2 \int_{0}^{1} x\pd{x} v_{\mathbf{y}}(\Lambda;x) \dd{x} > \Lambda \ENorm 
	\end{equation*}
	using the upper bound on \M{\ol{z}(\Lambda;\xi)=\ol{v}_{\mathbf{z}}(\Lambda;\ol{x})} and the monotonies of \cref{eq:PW:r inc z} and \cref{lem:O:PS}. 
\end{proof}
\begin{corollary}\label{cor:CP:slowest} 
	\FirstLine{The slowest compact orbit \M{\olst{V}(\ENorm^{-1})} is strictly upper class}. 
\end{corollary}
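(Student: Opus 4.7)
The corollary asks for strict upper class status of the compact orbit at the extreme \M{\Lambda=\ENorm^{-1}} of the environment domain \M{\olst{X}(\ol{r};D)=\closed{0}{\ENorm^{-1}}\times\closed{0}{1}}; large \M{\Lambda} corresponds to slow \M{V}, so this is indeed the ``slowest'' compact orbit in the family. The plan is to argue by contrapositive of \cref{cor:KP:class}, which partitions compact orbits into (i) strictly upper class U, carrying no plane wave, versus (ii) middle or lower class, whose interior \emph{is} the unique plane wave of problem \M{\Lambda}. It therefore suffices to forbid any plane wave at the endpoint \M{\Lambda=\ENorm^{-1}}.

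The forbidding inequality is already harvested inside the proof of \cref{lem:CP:PW}: integrating the \M{v_{\mathbf{y}}}-component of Non-Autonomous \cref{lem:O:NADS} across a plane wave, then invoking the upper bound on \M{\ol{v}_{\mathbf{z}}} together with the monotonicities supplied by \cref{eq:PW:r inc z} and \cref{lem:O:PS}, delivered the strict inequality \M{1>\Lambda\ENorm} as a necessary condition for plane wave existence. I would simply quote this inequality and substitute \M{\Lambda=\ENorm^{-1}}; the resulting \M{1>1} is absurd, so the interior of \M{\olst{V}(\ENorm^{-1})} cannot be a plane wave.

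With the plane wave option excluded, \cref{cor:KP:class} leaves only strictly upper class U for \M{\olst{V}(\ENorm^{-1})}. The sole sanity check concerns strictness at the boundary value \M{\Lambda=\ENorm^{-1}}, but the strictness in \cref{lem:CP:PW}'s derivation originates from \M{\ol{v}_{\mathbf{y}}<1} on the interior \M{\open{0}{1}_{\Lambda}} (itself a consequence of \cref{lem:O:PS}), a condition entirely independent of the particular value of \M{\Lambda}. I anticipate no real obstacle: the corollary is essentially a one-line contrapositive of \cref{lem:CP:PW} evaluated at the supremum of admissible lentors.
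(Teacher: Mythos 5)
Your proposal is correct and coincides with the paper's own (implicit) justification: \cref{lem:CP:PW} forces every plane wave to satisfy the strict inequality \M{1>\Lambda\ENorm}, so no plane wave exists at \M{\Lambda=\ENorm^{-1}}, and the trichotomy of \cref{cor:KP:class} then leaves only strictly upper class for \M{\olst{V}(\ENorm^{-1})}. Nothing further is needed.
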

\begin{lemma}\label{lem:CP:COC} 
	\FirstLine{Compact orbit contrast} obeys 
	\begin{multline*}
		\op{d} \lrp{ \left(\ol{v}_\mathbf{y}(\Lambda;1)-1\right) - (1-x) \left(\pd{x} \ol{v}_{\mathbf{y}}(\Lambda;1)-1\right) + \left(1-x\right) O\lrp{\pd{x} \ol{v}_{\mathbf{y}}(\Lambda;1)-\pd{x} v_{\mathbf{y}}(\Lambda;\mathring{x})}}^2 \\
		= \eps \pd{\Lambda} w_\mathbf{y}(\mathring{\Lambda};x) \in \open{0}{ \eps\left(2+\sgn(D-1)+\sgn(D-1)^2\right) \mathring{\Lambda}\norm{\ol{r}}{\olst{S}}^2} 
	\end{multline*}
	where \M{\mathring{x}\in\open{x}{1}\subset\open{0}{1}} and \M{\mathring{\Lambda}\in\open{\Lambda}{\Lambda+\eps}\subset\open{0}{\ENorm^{-1}}}. The big \M{O} (reported exactly) shrinks to zero as \M{x\to1}, becoming insignificant unless \M{\op{d} \lrprm{\left(1-(1-x) \pd{x}\right) \sqrt{\ol{w}_\mathbf{y}(\Lambda;1)}}^2=o\lrp{(1-x)^2}}. 
\end{lemma}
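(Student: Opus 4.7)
The plan is to recognise the square of the big parenthesised expression as $w_\mathbf{y}(\Lambda;x)$ itself, cash in $\op{d} w_\mathbf{y}$ using classical mean value theorems first in $x$ (to generate the Taylor form) and then in $\Lambda$ (to convert contrast to a derivative), and finally bound the result with \cref{lem:IP:Bound}.

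First I would invoke \cref{cor:KP:1 smooth} to secure $\ol{v}_\mathbf{y}\in\cont{1}{\closed{0}{1}_\Lambda}$. The elementary mean value theorem between $x$ and $1$ then supplies $\mathring{x}\in\open{x}{1}$ with $v_\mathbf{y}(\Lambda;x) - x = (\ol{v}_\mathbf{y}(\Lambda;1)-1) - (1-x)(\pd{x} v_\mathbf{y}(\Lambda;\mathring{x})-1)$. Splitting the bracketed slope as $\pd{x} v_\mathbf{y}(\Lambda;\mathring{x}) - 1 = (\pd{x}\ol{v}_\mathbf{y}(\Lambda;1)-1) - (\pd{x}\ol{v}_\mathbf{y}(\Lambda;1) - \pd{x} v_\mathbf{y}(\Lambda;\mathring{x}))$ regroups the identity into exactly the big parenthesised expression of the Lemma, the final summand being the claimed $(1-x) O(\cdot)$ residual (reported exactly). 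Squaring recovers $w_\mathbf{y}(\Lambda;x) = (v_\mathbf{y}-x)^2$ by \cref{def:IP:IP}.

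Second, since \cref{lem:CP:CO} furnishes $\olvr{w}\in\cont{0}{\olst{X}(\ol{r};D)}$ and \cref{cor:IP:CEBC} secures $\pd{\Lambda} w_\mathbf{y}$ throughout $\clopen{0}{1}_\Lambda$, the mean value theorem in $\Lambda$ applied on the closed interval $\closed{\Lambda}{\Lambda+\eps}\subset\closed{0}{\ENorm^{-1}}$ produces $\mathring{\Lambda}\in\open{\Lambda}{\Lambda+\eps}$ satisfying $\op{d} w_\mathbf{y}(\Lambda;x) = \eps\pd{\Lambda} w_\mathbf{y}(\mathring{\Lambda};x)$. Inserting \cref{lem:IP:Bound} at $\mathring{\Lambda}$ bounds $\mathring{\Lambda}\pd{\Lambda} w_\mathbf{y}(\mathring{\Lambda};x)$ within $(2+\sgn(D-1)+\sgn(D-1)^2)\norm{\mathring{\Lambda}\ol{r}}{\olst[\mathring{\Lambda}]{S}}^2 = (2+\sgn(D-1)+\sgn(D-1)^2)\mathring{\Lambda}^2\norm{\ol{r}}{\olst{S}}^2$, the $\Lambda$-independence of $\norm{\ol{r}}{\olst{S}}$ coming from \cref{lem:CP:CO}. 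Dividing through by $\mathring{\Lambda}$ and multiplying by $\eps$ recovers the advertised open interval.

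Finally, the concluding remark is routine $o$-calculus: the residual factor $\pd{x}\ol{v}_\mathbf{y}(\Lambda;1)-\pd{x} v_\mathbf{y}(\Lambda;\mathring{x})$ is $o(1)$ as $\mathring{x}\to 1$ (hence as $x\to 1$) by continuity of $\pd{x}\ol{v}_\mathbf{y}$ at the terminal endpoint, so the squared residual and its cross-term with the $(1-x)$-linear portion each contribute $o((1-x)^2)$ --- negligible beside the principal contrast except in the degenerate case flagged. The main obstacle I expect is not in this routine Taylor--MVT exchange but in legitimising the $\Lambda$-mean value theorem uniformly across both contrasted orbits and up to the closed endpoint $\Lambda=0$; this rests on the infrastructure of \cref{lem:IP:Bound,cor:IP:CEBC,lem:CP:CO} already paid for. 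Once that infrastructure is marshalled, the present Lemma reduces to two MVT invocations and one substitution of the bound.
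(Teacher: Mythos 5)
Your proposal is correct and follows essentially the same route as the paper's (very terse) proof: recognise the bracketed expression as the exact mean-value-theorem expansion of \M{v_\mathbf{y}(\Lambda;x)-x} about \M{\ol{x}=1} via \cref{cor:KP:1 smooth}, so that its square under \op{d} is precisely \M{\op{d} w_\mathbf{y}(\Lambda;x)}, then apply the mean value theorem in \M{\Lambda} and insert the bound of \cref{lem:IP:Bound}. The only added value in your write-up is making explicit the regrouping of \M{\pd{x} v_{\mathbf{y}}(\Lambda;\mathring{x})-1} into the exactly-reported big \M{O} and the factorisation \M{\norm{\mathring{\Lambda}\ol{r}}{\olst[\mathring{\Lambda}]{S}}=\mathring{\Lambda}\norm{\ol{r}}{\olst{S}}}, both of which the paper leaves implicit.
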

\begin{proof}
	Contrast Operator \labelcref{def:IP:CO} -- which is limitless -- simply computes the difference of two functions here sufficiently smooth as follows. The left hand side applies the mean value theorem (in \M{x}) to \M{\op{d} \ol{v}_\mathbf{y}\in\cont{1}{\closed{0}{1}_{\Lambda}}} by \cref{cor:KP:1 smooth}. The right hand side applies the mean value theorem (in \M{\Lambda}) to \M{\op{d} w_\mathbf{y} \in\cont{1}{\st{X}(r;D)}} using \cref{lem:IP:Bound}. 
\end{proof}
\begin{corollary}\label{cor:CP:UCU} 
	\FirstLine{Contrasting two upper class (U or UM) orbits} gives a strictly positive result 
	\begin{equation*}
		\pd{\Lambda+} \ol{w}_\mathbf{y}(\Lambda;1) \in \opclo{0}{\left(2+\sgn(D-1)+\sgn(D-1)^2\right) \Lambda \norm{\ol{r}}{\olst{S}}^2} 
	\end{equation*}
\end{corollary}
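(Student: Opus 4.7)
My plan is to specialize \cref{lem:CP:COC} to $\ol{x}=1$ and then pass to the right-sided $\eps\to 0^+$ limit demanded by \cref{def:IP:CO}. The upper-class hypothesis enters precisely to license this specialization: by \cref{cor:KP:1 smooth}, U and UM orbits satisfy $\ol{v}_{\mathbf{y}}\in\cont{1}{\closed{0}{1}_{\Lambda}}$, so $\pd{x}\ol{v}_{\mathbf{y}}(\Lambda;1)$ exists and $\pd{x}v_{\mathbf{y}}(\Lambda;\mathring{x})\to\pd{x}\ol{v}_{\mathbf{y}}(\Lambda;1)$ as $\mathring{x}\to 1$.

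Setting $x=1$ in \cref{lem:CP:COC} kills every explicit factor of $(1-x)$ in the bracket, and the $O$-correction -- already carrying a $(1-x)$ prefactor -- vanishes through its own argument tending to zero. The bracket thereby collapses to $\ol{v}_{\mathbf{y}}(\Lambda;1)-1$, whose square is exactly $\ol{w}_{\mathbf{y}}(\Lambda;1)$. So \cref{lem:CP:COC} at $x=1$ reduces to
\[
	\op{d}\ol{w}_{\mathbf{y}}(\Lambda;1)\in\open{0}{\eps(2+\sgn(D-1)+\sgn(D-1)^2)\mathring{\Lambda}\norm{\ol{r}}{\olst{S}}^2}.
\]
Dividing by $\eps$ and invoking \cref{def:IP:CO} as $\eps\to 0^+$, the intermediate value $\mathring{\Lambda}\in\open{\Lambda}{\Lambda+\eps}$ is squeezed onto $\Lambda$, the open upper bound becomes the claimed closed endpoint $(2+\sgn(D-1)+\sgn(D-1)^2)\Lambda\norm{\ol{r}}{\olst{S}}^2$, and \cref{lem:CP:CO,cor:CP:r} provide the uniform continuity necessary to exchange the two limits.

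The main obstacle is preserving the strict positivity of the lower bound through the $\eps\to 0^+$ limit, since a strictly positive sequence may collapse to zero. I would address this by tracing \cref{lem:IP:Bound}'s strictly positive lower bound on $\Lambda\pd{\Lambda}w_{\mathbf{y}}$ through compactification at $\ol{x}=1$: the bound ultimately derives from the strict positivity of $\Lambda\pd{\Lambda}(\Lambda r)$ along interior orbits, which persists to the compact terminus by uniform continuity of the reaction term $\ol{\Lambda}\ol{r}(\olvr{u})$ on $\olst{X}(\ol{r};D)$ (\cref{cor:CP:r}). The boundary case UM, in which $\ol{v}_{\mathbf{y}}(\Lambda;1)=1$ and $\ol{w}_{\mathbf{y}}(\Lambda;1)=0$ degenerate, is salvaged via \cref{cor:KP:lim}, which exhibits UM orbits as the strict limit of U-class orbits and thereby transmits the strict positivity across the class boundary.
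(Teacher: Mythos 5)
Your reduction of \cref{lem:CP:COC} at \M{x=1} followed by the \M{\eps\to0} limit correctly delivers the \emph{closed} interval \M{\closed{0}{\left(2+\sgn(D-1)+\sgn(D-1)^2\right)\Lambda\norm{\ol{r}}{\olst{S}}^2}}, and you have rightly identified that the entire remaining content of the Corollary is the strictness of the lower bound. The difficulty is that your patch for that strictness does not hold up. By killing every \M{(1-x)} factor before contrasting, you discard the only quantity in \cref{lem:CP:COC} whose positivity survives the \M{\eps\to0} limit. Your appeal to strict positivity of \M{\Lambda\pd{\Lambda}(\Lambda r)} ``persisting to the compact terminus by uniform continuity'' is precisely what \cref{cor:CP:r} rules out: \M{\ol{\Lambda}\ol{r}(\olvr{u})=0} identically on \M{\ol{x}=1} by \cref{eq:PW:ker r}, so the quantity whose positivity you wish to transport vanishes at the very point where you need it, and uniform continuity transports only non-strict inequalities. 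Likewise \cref{cor:KP:lim} describes the U/UM class boundary but carries no differential information capable of ``transmitting strict positivity across'' it. (A minor point: \cref{cor:KP:1 smooth} gives \M{\ol{v}_\mathbf{y}\in\cont{1}{\closed{0}{1}_{\Lambda}}} for every compact orbit, so the upper class hypothesis is not what licenses the specialization; it enters through \cref{tab:KP:class} and through the orientation of the right-sided contrast.)

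The paper's proof keeps the residual alive. Retaining the big \M{O} of \cref{lem:CP:COC} -- reported exactly -- and expanding the square with the terminal data of \cref{tab:KP:class} isolates \M{\op{d}\ol{w}_\mathbf{y}(\Lambda;1)/\eps} against a subtracted term proportional to \M{\op{d}\lrp{\Lambda r(\vr{v}(\Lambda;\mathring{x}))}(1-x)/\eps} at an \emph{interior} point \M{\mathring{x}\in\open{x}{1}}. There \M{\Lambda r(\vr{v}(\Lambda;\mathring{x}))} is strictly increasing in \M{\Lambda} by \cref{lem:IP:Bound}, so after the mean value theorem in \M{\Lambda} this subtracted term supplies a strictly positive floor which membership of the whole difference in \M{\open{0}{\cdots}} then forces upon \M{\pd{\Lambda+}\ol{w}_\mathbf{y}(\Lambda;1)}; only afterwards are the limits \M{x\to1} and \M{\eps\to0} taken. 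Setting \M{x=1} first, as you do, collapses this floor to \M{0} and only the non-strict conclusion remains. For strictly upper class orbits your route can be repaired more simply: regular termination keeps the weights \M{g_\mathbf{n}} bounded, so \crefrange{lem:IP:CO Inv}{lem:IP:Bound} hold on all of \M{\closed{0}{1}_{\Lambda}} and \M{\Lambda\pd{\Lambda}\ol{w}_\mathbf{y}(\Lambda;1)} is a genuine mean of strictly positive quantities. It is exactly the UM case you flag -- where \M{\ol{w}_\mathbf{y}(\Lambda;1)=0} and the weight diverges -- that cannot be rescued without the interior residual.
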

\begin{proof}
	Retaining residuals in \cref{lem:CP:COC} using \cref{tab:KP:class} yields 
	\begin{equation*}
		\left(\frac{\op{d} \ol{w}_\mathbf{y}(\Lambda;1)}{\eps\sqrt{\ol{w}_\mathbf{y}(\Lambda;1)}}\left(\sqrt{\ol{w}_\mathbf{y}(\Lambda;1)}+(1-x)\right) - O\lrp{2\frac{\op{d}(\Lambda r(\mathring{\vr{v}}))}{\eps}}(1-x)\right) \in \open{0}{\left(2+\sgn(D-1)+\sgn(D-1)^2\right) \mathring{\Lambda}\norm{\ol{r}}{\olst{S}}^2} 
	\end{equation*}
	where \M{\Lambda r(\mathring{\vr{v}}) \deq \Lambda r(\vr{v}(\Lambda;\mathring{x}))} is strictly increasing in \M{\Lambda} by \cref{lem:IP:Bound}. Apply the mean value theorem (in \M{\Lambda}) to the left hand side prior to taking limits (\M{x\to1} followed by \M{\eps\to0}) to see that \M{\pd{\Lambda+} \ol{w}_\mathbf{y}(\Lambda;1)>0} strictly. Orientation is material to ensuring only upper class orbits are constrasted, the limit being the right-sided derivative in \cref{def:IP:CO}. 
	
	Incidentally, this result is obvious for strictly upper class orbits because weights \M{g_{\mathbf{n}}} never diverge in this case, securing their associated mean values throughout the compact domain -- that is to say \crefrange{lem:IP:CO Inv}{lem:IP:Bound} apply right across \M{\closed{0}{1}_{\Lambda}}. 
\end{proof}
\begin{corollary}\label{cor:CP:MCM} 
	\FirstLine{There is at most one middle class (UM, M or LM) orbit in \olst{V}}. 
\end{corollary}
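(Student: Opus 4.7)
The plan is to argue by contradiction. Suppose two distinct middle class orbits $\olst{V}(\Lambda_1),\olst{V}(\Lambda_2)\in\olst{V}$ with $\Lambda_1<\Lambda_2$ both exist. Each being singular terminal, $\ol{v}_\mathbf{y}(\Lambda_i;1)=1$, so both lie in the closed zero set $Z\deq\setbuilder{\Lambda\in\closed{0}{\ENorm^{-1}}}{\ol{w}_\mathbf{y}(\Lambda;1)=0}$. First, I would combine the continuity of the compact family from \cref{lem:CP:CO} with the strict positivity $\pd{\Lambda+}\ol{w}_\mathbf{y}(\Lambda;1)>0$ of \cref{cor:CP:UCU} to force $Z$ to be a single interval. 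On its complement $Z^c$, every orbit is strictly upper class U, so \cref{cor:CP:UCU} delivers strict monotonicity of $\ol{w}_\mathbf{y}(\cdot;1)$ on each connected component of $Z^c$ — a component therefore cannot end by returning to zero. Hence $[\Lambda_1,\Lambda_2]\subseteq Z$, and \cref{cor:CP:UCU} at any interior UM would again give $\pd{\Lambda+}\ol{w}_\mathbf{y}>0$ against $\ol{w}_\mathbf{y}\equiv 0$, so every orbit in $\open{\Lambda_1}{\Lambda_2}$ is of class M, LM, or L.

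The contradiction then comes from \cref{lem:CP:COC} applied with finite $\eps=\Lambda_2-\Lambda_1$. With $\ol{v}_\mathbf{y}(\Lambda_i;1)-1=0$ for both orbits the leading term vanishes and the big-$O$ residual is suppressed as $x\to 1$ — the suppression condition stated in the Lemma is trivial since $\sqrt{\ol{w}_\mathbf{y}}$ vanishes identically. Dividing the resulting identity by $(1-x)^2$ and letting $x\to 1$, the LHS converges to $(\pd{x}\ol{v}_\mathbf{y}(\Lambda_1;1)-\pd{x}\ol{v}_\mathbf{y}(\Lambda_2;1))^2\geq 0$. Taylor-expanding $w_\mathbf{y}(\mathring{\Lambda};x)\sim(1-x)^2(1-\pd{x}\ol{v}_\mathbf{y}(\mathring{\Lambda};1))^2$ at the singular terminus reduces the RHS to a limit proportional to $(1-\pd{x}\ol{v}_\mathbf{y}(\mathring{\Lambda};1))\pd{\Lambda}\pd{x}\ol{v}_\mathbf{y}(\mathring{\Lambda};1)$. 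When $\mathring{\Lambda}$ falls in class M strictly, \cref{tab:KP:class} pins $\Lambda\pd{\Lambda}(1-\pd{x}\ol{v}_\mathbf{y})\in\st{R^-}$, making the RHS strictly negative and incompatible with the non-negative LHS.

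The hard part will be the LM class, where \cref{tab:KP:class} reports $\Lambda\pd{\Lambda}(1-\pd{x}\ol{v}_\mathbf{y})\in\set{\pm\infty}$ from the degenerate eigenspace at $\Lambda\pd{x}\ol{r}(\vr{1})=-1/4$. Since this degeneracy condition singles out at most a single $\Lambda$ per environment, at most one LM orbit can ever exist; its coexistence with any distinct middle class orbit then reduces to the M case by sliding the intermediate $\mathring{\Lambda}$ infinitesimally off the degeneracy, where the finite tabulated derivative restores the contradiction. The orientation remark closing the proof of \cref{cor:CP:UCU} is a reminder to track one-sided limits carefully so that $\mathring{\Lambda}$ lands in the desired class.
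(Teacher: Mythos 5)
Your overall strategy -- feed two singular terminal orbits into \cref{lem:CP:COC} and extract a sign contradiction from the terminal slope data in \cref{tab:KP:class} -- is the paper's strategy, but the execution contains a sign-destroying error on the left hand side. The operator \op{d} applied to a square is a \emph{difference of squares}, not the square of a difference: with \M{\ol{v}_\mathbf{y}(\Lambda_i;1)=1} the LHS of \cref{lem:CP:COC}, divided by \M{(1-x)^2}, tends to \M{\lrp{1-\pd{x}\ol{v}_\mathbf{y}(\Lambda_2;1)}^2-\lrp{1-\pd{x}\ol{v}_\mathbf{y}(\Lambda_1;1)}^2}, not to \M{\lrp{\pd{x}\ol{v}_\mathbf{y}(\Lambda_1;1)-\pd{x}\ol{v}_\mathbf{y}(\Lambda_2;1)}^2\geq 0}. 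For middle class orbits \M{1-\pd{x}\ol{v}_\mathbf{y}(\Lambda;1)=-\Omega_{\indexplus}(\Lambda;1)} (including LM, where it equals \M{1/2}), and \M{\Omega_{\indexplus}(\Lambda;1)^2} is strictly decreasing in \M{\Lambda} -- this is exactly the final column of \cref{tab:KP:class} that you cite. So the true LHS is \emph{strictly negative}, while \cref{lem:CP:COC} asserts the RHS lies in \M{\open{0}{\cdot}} by \cref{lem:IP:Bound}. That single comparison is the whole proof in the hyperbolic case; the paper writes it as \M{\op{d}w_\mathbf{y}=\op{d}(\Omega_{\indexplus}+o(1))^2(1-x)^2<0}.

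Having lost the negative sign on the left, you try to recover it on the right by Taylor-expanding \M{w_\mathbf{y}(\mathring{\Lambda};x)} at the terminus and differentiating in \M{\Lambda}. This is not available: it interchanges the \M{x\to1} limit with \M{\pd{\Lambda}}, precisely the interchange the contrast machinery is built to police (\cref{def:IP:CO} takes the \M{\eps} limit before the \M{x} limit, and \cref{def:IP:IP} exists because \M{\pd{\Lambda}\pd{x}\ol{v}_\mathbf{y}(\Lambda;1)} may diverge), and \M{\pd{\Lambda}w_\mathbf{y}(\mathring{\Lambda};x)} at interior \M{x} is pinned strictly positive by \cref{lem:IP:Bound}, so it cannot be ``computed'' to be negative -- doing so just re-derives the LHS in disguise. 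Two smaller points: the \M{Z}-interval preliminaries are harmless but unnecessary, since the two middle class orbits are contrasted directly over \M{\eps=\Lambda_2-\Lambda_1} regardless of what lies between; and the LM worry at the end dissolves because LM also terminates with slope \M{-\Omega_{\indexplus}=1/2}, so no perturbation off the degeneracy is needed. You should also state the non-hyperbolic case explicitly: there middle class means UM, and two UM orbits give \M{\op{d}\ol{w}_\mathbf{y}(\Lambda;1)=0}, contradicting \cref{cor:CP:UCU} directly -- your interval argument gestures at this but only excludes UM orbits strictly between \M{\Lambda_1} and \M{\Lambda_2}, not at the endpoints themselves.
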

\begin{proof}
	If the environment is terminally non-hyperbolic (\M{\pd{x} \ol{r}(\vr{1})=0}) any middle class orbits can only be upper middle class (UM), contrasted with each other to give \M{\op{d} w_\mathbf{y}(\Lambda;1)=0} in violation of \cref{cor:CP:UCU}. Assuming a terminally hyperbolic environment (\M{\pd{x} \ol{r}(\vr{1})<0}) contrast any two middle class (M or LM) orbits according to \cref{cor:KP:class} 
	\begin{equation*}
		\op{d} w_\mathbf{y}(\Lambda;x) = \op{d} \lrp{\Omega_{ 
		\indexplus}(\Lambda;1) + o(1)}^2(1-x)^2 < 0 
	\end{equation*}
	violating \cref{lem:CP:COC} as \M{x\to1}. 
\end{proof}

\section{Conclusion}\label{sec:Con} 
\begin{result}\label{thm:CP:pweuc} 
	A single critical lentor \M{\Lambda_*(\ol{r};D)\in\open{0}{\ENorm^{-1}}} is attributable to any given environment \M{(\ol{r};D)}, where 
	\begin{equation*}
		\ENorm \deq 2\int_{0}^{1} r\lrp{\vr{1}-\vr{1}_\mathbf{x}(1-x)-\vr{1}_\mathbf{z}(1-x)^{\max(D,1)}} \dd{x} 
	\end{equation*}
	Problem \M{\Lambda\in\opclo{0}{\Lambda_*}} possesses a unique plane wave in this environment, while problem \M{\Lambda\in\open{\Lambda_*}{\infty}} possesses none. The slowest plane wave \M{\st{V}(\Lambda_*)} is invariably middle class \M{\pd{x}\ol{v}_{\mathbf{y}}^{}(\Lambda;1) \leq 1/2}, all other plane waves \image{\st{V}}{\open{0}{\Lambda_*}} strictly lower class \M{\pd{x}\ol{v}_{\mathbf{y}}^{}(\Lambda;1) > 1/2}.
	
	\hfill The\hfill square\hfill elevation \hfill\M{\ol{w}_\mathbf{y}(\ol{\Lambda};\ol{x})\deq\left(\ol{v}_\mathbf{y}(\ol{\Lambda};\ol{x})-\ol{x}\right)^2}\hfill is\hfill continuously\hfill differentiable\hfill except\hfill at\hfill the\hfill point 
	\newline \M{(\Lambda_*;1)\in\olst{X}(\ol{r};D)\deq\closed{0}{\ENorm^{-1}}\times\closed{0}{1}}. It is monotonically increasing in \ol{\Lambda} and Lipschitz continuous as 
	\begin{equation*}
		\image{\pd{\Lambda-}\ol{w}_\mathbf{y}}{\olst{X}(\ol{r};D)}\cup\image{\pd{\Lambda+}\ol{w}_\mathbf{y}}{\olst{X}(\ol{r};D)} \subseteq \closed{0}{\left(2+\sgn(D-1)+\sgn(D-1)^2\right) \ol{\Lambda} \norm{\ol{r}}{\olst{S}(\ol{\Lambda})}^2} 
	\end{equation*}
\end{result}
\begin{proof}
	In essence \cref{lem:KP:1,lem:KP:1reg} describe termination, \cref{lem:IP:Bound} governs all prior behaviour, and \cref{lem:CP:COC} bridges the two. \Cref{lem:CP:PW} just ensures all plane waves are captured. Assuming the prequel, the Theorem is ultimately a simple combination of \Cref{cor:CP:slowest,cor:CP:UCU,cor:CP:MCM,cor:KP:lim}. 
\end{proof}
\begin{remark}
	\Cref{lem:IP:Bound} portrays two contrasted orbits as a zip fastener starting at \vr{0}. Taking the limit \M{\eps\to0} is like closing the zip. The end of the zip fastens awkwardly, needing help from the virtual constructions of \cref{lem:KP:1reg} and the bridge of \cref{lem:CP:COC} to close securely. This is hardly surprising, for closing the zip unites orbits even across class boundaries featuring irreconcilable differences at termination. Apart from degenerate hyperbolic termination, it seems hard to believe that upper class and lower class orbits coalesce at the critical lentor, as the two span a step change in leading eigenvalue at termination. To quell the associated notion that degenerate termination is to be expected in hyperbolic systems, here is a staightforward result. 
\end{remark}
\begin{corollary}\label{cor:Con:Nondeg} 
	In any hyperbolic environment all plane waves \M{\image{\st{V}}{\open{0}{\Lambda_*}}} are strictly lower class \M{\pd{x}\ol{v}_{\mathbf{y}}^{}(\Lambda;1) > 1/2} except the slowest one \M{\st{V}(\Lambda_*)}. For every environment in which the slowest plane wave is lower middle class \M{\pd{x}\ol{v}_{\mathbf{y}}^{}(\Lambda_*;1) = 1/2} there are countless environments in which \M{\st{V}(\Lambda_*)} is upper middle class \M{\pd{x}\ol{v}_{\mathbf{y}}^{}(\Lambda_*;1) > 1/2}. The latter lack any lower middle class at all, instead featuring a discontinuity in \M{\pd{x}\ol{v}_{\mathbf{y}}^{}(\Lambda;1)} at \M{\Lambda=\Lambda_*}. 
\end{corollary}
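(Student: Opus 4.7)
The first sentence I will assemble from \cref{cor:KP:lim} and \cref{cor:CP:MCM}. By \cref{cor:KP:lim}, as $\Lambda \searrow \Lambda_*$ the limiting upper class orbital is a middle class plane wave, so $\st{V}(\Lambda_*)$ itself belongs to the middle class, and a hyperbolic environment restricts its \cref{tab:KP:class} designation to strictly M or LM. Every plane wave for $\Lambda \in \open{0}{\Lambda_*}$ is likewise middle or lower class by \cref{cor:KP:class}, but \cref{cor:CP:MCM} forbids a second middle class orbit; each such plane wave is therefore strictly lower class, and the hyperbolic L row of \cref{tab:KP:class} delivers $\pd{x}\ol{v}_{\mathbf{y}}^{}(\Lambda;1) > 1/2$.

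For the countless-environments clause I would first read off from \cref{tab:KP:class} that LM corresponds exactly to the degenerate condition $\Lambda_*\pd{x}\ol{r}(\vr{1}) = -1/4$, while strictly middle M corresponds to $\Lambda_*\pd{x}\ol{r}(\vr{1}) \in \open{-1/4}{0}$, so LM imposes a codimension one restriction on the environment. Given any LM environment $(\ol{r};D)$, I would construct a one-parameter perturbation
\begin{equation*}
    \tilde{r}_c(\vr{u}) \deq \ol{r}(\vr{u}) + c(1-\ol{x})\,\phi(\vr{u}), \quad c \in \open{0}{\eps},
\end{equation*}
with $\phi \in \cont{\infty}{\olst{U}}$ non-negative, strictly positive at $\vr{1}$, and chosen to preserve every axiom of \cref{def:PW:Env} for sufficiently small $c$. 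The Taylor coefficient shifts as $\pd{x}\tilde{r}_c(\vr{1}) = \pd{x}\ol{r}(\vr{1}) - c\,\phi(\vr{1})$, and continuous dependence of the critical lentor on the environment makes $\Lambda_*(\tilde{r}_c;D)\,\pd{x}\tilde{r}_c(\vr{1})$ a continuous function of $c$ which equals $-1/4$ at $c=0$ and is displaced off $-1/4$ for all but a discrete set of small $c$. This delivers uncountably many perturbed environments in which $\st{V}(\Lambda_*)$ is strictly middle rather than LM.

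The residual claims about these environments are then immediate. No LM orbit appears anywhere in the perturbed family because LM can only occur at the unique lentor $-1/(4\pd{x}\tilde{r}_c(\vr{1}))$, which by the main Theorem applied to $\tilde{r}_c$ exceeds $\Lambda_*(\tilde{r}_c;D)$ and so produces a strictly upper class compact orbit rather than a plane wave. The jump in $\pd{x}\ol{v}_{\mathbf{y}}^{}(\Lambda;1)$ at $\Lambda_*$ follows by juxtaposing the first sentence of the corollary -- $\pd{x}\ol{v}_{\mathbf{y}}^{}(\Lambda;1) > 1/2$ for $\Lambda < \Lambda_*$ -- with $\pd{x}\ol{v}_{\mathbf{y}}^{}(\Lambda_*;1) \ne 1/2$ inherited from the strictly middle classification. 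The real difficulty is the continuous dependence of $\Lambda_*$ on $\ol{r}$, never spelled out in the prequel; I would extract it by re-running \cref{lem:O:CO}, \cref{lem:IP:Bound} and \cref{lem:CP:CO} with uniform constants across the perturbation family, whereupon the monotonicity of $\pd{\Lambda+}\ol{w}_{\mathbf{y}}^{}(\Lambda;1)$ established in the main Theorem forces $\Lambda_*(\tilde{r}_c;D)$ to depend continuously on $c$.
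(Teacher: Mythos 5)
Your reconstruction of the first sentence from \cref{cor:KP:lim}, \cref{cor:KP:class} and \cref{cor:CP:MCM} is sound and matches the logic of the main Result. The second and third sentences, however, contain a genuine gap. Having perturbed to \M{\tilde{r}_c} with \M{\pd{x}\tilde{r}_c(\vr{1})=\pd{x}\ol{r}(\vr{1})-c\,\phi(\vr{1})}, you assert that the continuous function \M{c\mapsto\Lambda_*(\tilde{r}_c;D)\,\pd{x}\tilde{r}_c(\vr{1})}, equal to \M{-1/4} at \M{c=0}, is ``displaced off \M{-1/4} for all but a discrete set of small \M{c}''. Continuity gives nothing of the sort: a continuous function passing through \M{-1/4} may sit at \M{-1/4} on an entire interval, and nothing in your argument rules out \M{\Lambda_*} tracking the perturbation of \M{\pd{x}\tilde{r}_c(\vr{1})} so as to preserve degeneracy. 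Since the entire point of the clause is to break the relation \M{\Lambda_*\pd{x}r(\vr{1})=-1/4}, this is precisely the step that needs proving, and it is missing. Compounding this, the continuous dependence of \M{\Lambda_*} on the environment, on which you lean, is itself never established in the paper (whose framework fixes the environment once and for all and varies only \M{\Lambda}); you rightly flag it as ``the real difficulty'' but defer it to a re-run of \cref{lem:O:CO}, \cref{lem:IP:Bound} and \cref{lem:CP:CO} with uniform constants, which is a promissory note rather than an argument.

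The paper avoids both difficulties with a quantitative rather than a continuity argument. It perturbs by adding any reaction \M{\ol{r}_{nh}} satisfying \crefrange{eq:PW:U def}{eq:PW:r inc z} with \M{\pd{x}\ol{r}_{nh}(\vr{1})=0}, so the terminal derivative --- and hence the unique degenerate lentor \M{-1/(4\pd{x}\ol{r}_{h}(\vr{1}))} at which lower middle class termination can occur --- is left untouched, while the environment norm, being linear in \M{r}, inflates to \M{\ENorm[h]+\ENorm[nh]}. \Cref{lem:CP:PW} bounds every plane wave by \M{\Lambda\ENorm<1}, so choosing \M{\ENorm[nh]} large enough forces the critical lentor of the combined environment strictly below the degenerate lentor; lower middle class termination then cannot occur at any admissible \M{\Lambda}, and the countless admissible choices of \M{\ol{r}_{nh}} (for instance, rescaling one by any sufficiently large constant) supply the countless environments. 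If you wish to rescue your one-parameter family, replace the appeal to continuity by an analogous hard bound certifying \M{\Lambda_*(\tilde{r}_c;D)<-1/(4\pd{x}\tilde{r}_c(\vr{1}))} for the parameters you retain.
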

\begin{proof}
	Fix the assumed environment as \M{(\ol{r}_{h},D)}, requiring \M{\ENorm[h]<1/\Lambda_*=-4D\pd{x}\ol{r}_{h}(\vr{1})}. Now take any \M{\ol{r}_{nh}} abiding \crefrange{eq:PW:U def}{eq:PW:r inc z} with \M{\pd{x}\ol{r}_{nh}(\vr{1})=0}. Then environment \M{(\ol{r}_{h}+\ol{r}_{nh},D)} lacks a lower middle class whenever \M{\left(\ENorm[nh]+\ENorm[h]\right)>-4D\pd{x}\ol{r}_{h}(\vr{1})} by \cref{lem:CP:PW}. 
\end{proof}
\begin{remark}
	Finally, a natural and well-known extension to cutoff reactions, which proceed only when product is sufficiently abundant. This is extremely common in modelling flames, where an ignition temperature \M{\breve{\mathsf{X}}} serves to ameliorate the notorious cold boundary problem. 
\end{remark}
\begin{corollary}\label{cor:Con:Cutoff} 
	Consider the original physical problem set out the Introduction, altered to incorporate a reaction cutoff 
	\begin{equation*}
		\breve{\mathsf{r}}(\breve{\mathsf{x}},\breve{\mathsf{z}})=0 \QT{for all} \breve{\mathsf{x}}\leq \breve{\mathsf{X}} 
	\end{equation*}
	This problem supports just one plane wave travelling at a characteristic wavespeed \M{V=V_\dag<V_*(1-\breve{\mathsf{X}})^{-1/2}}. 
\end{corollary}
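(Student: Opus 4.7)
The plan is to split the cutoff problem into an explicitly integrable linear region ahead of the front and a reactive region governed by the uncut environment $(\ol{r};D)$ that \cref{thm:CP:pweuc} already controls. Set $X_\dag\deq 1-\breve{\mathsf{X}}$ for the cutoff abscissa in the paper's $x$-coordinate. Throughout $x\in\closed{X_\dag}{1}$ the reaction vanishes, so Autonomous \cref{def:PW:ADS} reduces to the linear system $\pd{\xi}x=y-x$, $\pd{\xi}y=0$, $\pd{\xi}z=D(y-z)$, whose only solutions compatible with $\olvr{u}(\Lambda;+\infty)=\vr{1}$ have $y\equiv 1$ together with $x,z$ tending exponentially to $1$ at respective rates $1$ and $D$. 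Any plane wave of the cutoff problem therefore meets the interface $x=X_\dag$ with $y=1$ (and some $z$-value in $\open{0}{1}$), and for $x<X_\dag$ coincides identically with a trajectory of the uncut environment.

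Next, apply \cref{thm:CP:pweuc} to the uncut environment and study $\Phi(\Lambda)\deq\ol{v}_{\mathbf{y}}(\Lambda;X_\dag)$ read off the compact orbit family. For $\Lambda\in\opclo{0}{\Lambda_*}$ the compact orbit is singular terminal with $\ol{v}_{\mathbf{y}}(\Lambda;\cdot)$ strictly increasing in $x$ by \cref{lem:O:PS}, so $\Phi(\Lambda)<\ol{v}_{\mathbf{y}}(\Lambda;1)=1$. For $\Lambda>\Lambda_*$ the orbit is strictly upper class by \cref{cor:KP:class}, so $\ol{v}_{\mathbf{y}}(\Lambda;1)>1$ and monotonicity in $x$ provides a unique crossing $x_c(\Lambda)\in\open{0}{1}$ of the level $y=1$. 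Crucially \cref{lem:IP:Bound} applied at the fixed interior abscissa $X_\dag<1$ gives $\Lambda\pd{\Lambda}w_{\mathbf{y}}(\Lambda;X_\dag)>0$ strictly, and combined with $w_{\mathbf{y}}=(v_{\mathbf{y}}-x)^2$ together with $v_{\mathbf{y}}>x$ this forces $\pd{\Lambda}\Phi(\Lambda)>0$ on all of $\open{0}{+\infty}$. Continuity of $\Phi$ on $\opclo{0}{\Lambda_*}$ follows from \cref{lem:CP:CO}, and beyond $\Lambda_*$ from rerunning \cref{lem:O:CO} on a suitably enlarged phase space; integrating the lower bound of \cref{lem:IP:Bound} in $\log\Lambda$ then delivers $\Phi(\Lambda)\to+\infty$ as $\Lambda\to+\infty$.

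The intermediate value theorem thus produces a unique $\Lambda_\dag\in\open{\Lambda_*}{+\infty}$ with $\Phi(\Lambda_\dag)=1$; welding the restriction of the $\Lambda_\dag$-compact orbit to $x\in\open{0}{X_\dag}$ onto the explicit linear solution on $x\in\open{X_\dag}{1}$ yields the sole plane wave of the cutoff problem. The wavespeed bound is then automatic: $\Lambda_\dag>\Lambda_*$ combined with $\Lambda\propto V^{-2}$ gives $V_\dag<V_*$, which is strictly stronger than the stated $V_\dag<V_*(1-\breve{\mathsf{X}})^{-1/2}$ since $\breve{\mathsf{X}}\in\open{0}{1}$. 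The hardest step will be threading continuity of $\Phi$ through the class transition at $\Lambda=\Lambda_*$, where the compact orbit switches from singular to regular termination; the bridge argument of \cref{lem:CP:COC}, although posed there at $x=1$, can be replayed at the interior abscissa $X_\dag<1$ with no virtual regular-terminal extrapolation required and with the mean-value weights $g_{\mathbf{n}}$ of \cref{lem:IP:CO Inv} staying uniformly bounded, sidestepping the end-of-zip manoeuvres of \cref{cor:CP:UCU} entirely.
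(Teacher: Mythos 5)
Your route is genuinely different from the paper's. The paper rescales by \M{\ol{Y}\deq(1-\breve{\mathsf{X}})^{-1}} so that the cutoff sits at \M{x=1}, reads the matching condition as regular termination of a strictly upper class orbit at \M{(1,\ol{Y},\ol{Z})^\intercal}, and extracts uniqueness from \M{\pd{\Lambda+}\ol{Y}>0} via \cref{lem:CP:COC}; the factor \M{(1-\breve{\mathsf{X}})^{-1/2}} in the stated wavespeed bound is precisely the \M{\ol{Y}} absorbed into the scaled lentor \M{\Lambda=\ol{Y}V^{-2}}. You keep the original coordinates, match at the interior abscissa \M{X_\dag}, and extract uniqueness from monotonicity of \M{v_{\mathbf{y}}(\Lambda;X_\dag)} supplied by \cref{lem:IP:Bound}. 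Your version is arguably the cleaner deployment of the framework: the uncut environment genuinely satisfies \crefrange{eq:PW:U def}{eq:PW:r inc z}, whereas the paper's scaled cutoff reaction need not vanish (let alone smoothly) as \M{x\to1^-}, and by never contrasting at \M{x=1} you avoid the end-of-zip bridge altogether, exactly as you observe.

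Two repairs are needed. First, \M{\Phi(\Lambda)\to+\infty} cannot come from ``integrating the lower bound of \cref{lem:IP:Bound}'': that lower bound is \M{0}, which yields monotonicity but no growth. Use instead the integral identity from the proof of \cref{lem:CP:PW}, namely \M{v_{\mathbf{y}}(\Lambda;X_\dag)^2=2\Lambda\int_0^{X_\dag}r(\vr{v})\dd{x}+2\int_0^{X_\dag}x\pd{x}v_{\mathbf{y}}\dd{x}\geq2\Lambda\int_0^{X_\dag}r(\vr{v})\dd{x}}, bounding the last integral below uniformly in \M{\Lambda} by confining \M{z\geq\ol{x}\min(D,1)} via \cref{lem:O:PS} and invoking monotonicity of \M{r} in \M{z} from \cref{eq:PW:r inc z}; all you actually need is \M{\Phi>1} somewhere, which this supplies. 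Second, your \M{V_*} is the critical wavespeed of the \emph{uncut} environment, while the paper's \M{V_*} is that of its rescaled problem; these are not shown to coincide, so the claim that your bound \M{V_\dag<V_*} is strictly stronger than the stated one requires that identification to be made explicit. A minor further point: \cref{lem:IP:Bound} is a per-problem interior statement and so survives \M{\Lambda>\ENorm^{-1}} (every such \M{\Lambda} still owns a compact orbit by \cref{lem:O:CO}), but the compactified machinery of \cref{sec:CP} does not extend there; continuity of \M{\Phi} beyond \M{\Lambda_*} is more safely drawn from the upper bound of \cref{lem:IP:Bound} itself, which renders \M{w_{\mathbf{y}}(\cdot\,;X_\dag)} locally Lipschitz in \M{\Lambda}, than from \cref{lem:CP:CO}.
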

\begin{proof}
	Introducing 
	\begin{equation*}
		\ol{Y}\deq 1/(1-\breve{\mathsf{X}}) 
	\end{equation*}
	adopt the scaled dynamical system formulation 
	\begin{gather*}
		\vr{u} \deq (x,y,z)^{\intercal} \deq \ol{Y} \left((1 - \breve{\mathsf{x}}), (1 - \pd{\xi}\breve{\mathsf{x}} - \breve{\mathsf{x}}),\breve{\mathsf{z}} \right)^{\intercal} \\
		\Lambda r(\vr{u}) \deq \ol{Y} \mathop{V^{-2}}\breve{\mathsf{r}}(\breve{\mathsf{x}},\breve{\mathsf{z}}) 
	\end{gather*}
	which cuts off reaction for \M{x\geq1}. This is just the original problem already studied, except now the Terminal Boundary Constraint is \M{\ol{\vr{u}}(\Lambda;+\infty)=\vr{1}\ol{Y}}. Integrating backwards, the general solution post cutoff is 
	\begin{equation*}
		\olvr{u}(\Lambda;\xi)= \vr{1}\ol{Y} + \vr{1}_{\mathbf{x}}\left(1-\ol{Y}\right)\exp\lrprm{\Xi-\xi} + \vr{1}_{\mathbf{z}}\left(\ol{Z}-\ol{Y}\right)\exp\lrprm{D\Xi-D\xi} 
	\end{equation*}
	which terminates a plane wave if and only if it connects with a strictly upper class orbit of the original problem at \M{\olvr{u}=(1,\ol{Y},\ol{Z})}. Towards termination, the plane wave resembles the upper class orbits depicted in \cref{fig:KP:Class}, where the virtual (grey) region is now actual and gathers any orbit into the solid, horizontal, non-hyperbolic trajectory. \Cref{lem:CP:COC} insists \M{\pd{\Lambda+}\ol{Y}>0} amongst the upper class, thus allowing precisely one plane wave at some \M{\Lambda=\Lambda_{\dag}>\Lambda_*}. 
\end{proof}
\begin{remark}
	In closing, we should reiterate that the Conclusion presented here is by no means novel \citep{Marion85,Volpert94}, but the method of proof is. As a fairly tangible application of basic methods it hopes be of some pedagogical value regarding the limited but extremely important class of problems it covers. Further, the simple ideas presented here -- orbit contrast and breach cascades -- may be useful in understanding other dynamical systems quite unrelated to this work. Finally, one may spare a thought for the relation of this approach to the Leray-Schauder method. In this regard, the reliance of breach cascades on higher derivatives of the reaction function \M{\ol{r} \in \cont{\infty}{\olst{U}}} seems difficult to reconcile. Unnecessary in equidiffusive \M{(D=1)} environments, smoothness is absolutely essential to the breach cascades employed outside this special case. The Leray-Schauder method makes no such demand in any case. 
\end{remark}
\bibliographystyle{plainnat} 
\bibliography{main} \end{document}